\DeclareMathOperator{\Gr}{Gr}
\DeclareMathOperator{\var}{var}
\newtheorem{theorem}{Theorem}[section]
\newtheorem{proposition}[theorem]{Proposition}
\newtheorem{lemma}[theorem]{Lemma}
\newtheorem{corollary}[theorem]{Corollary}
\theoremstyle{definition}
\newtheorem{example}[theorem]{Example}
\numberwithin{equation}{section}
\begin{document}

\title{Endomorphisms of the lattice of epigroup varieties}
\thanks{Supported by the Ministry of Education and Science of the Russian Federation (project 2248), by a grant of the President of the Russian Federation for supporting of leading scientific schools of the Russian Federation (project 5161.2014.1) and by Russian Foundation for Basic Research (grant 14-01-00524).}

\author{S.\,V.\,Gusev \and B.\,M.\,Vernikov}

\maketitle

\begin{abstract}
We consider epigroups as algebras with two operations (multiplication and pseudoinversion) and construct a countably infinite family of injective endomorphisms of the lattice of all epigroup varieties. An epigroup variety is said to be a \emph{variety of finite degree} if all its nilsemigroups are nilpotent. We characterize epigroup varieties of finite degree in the language of identities and in terms of minimal forbidden subvarieties.
\end{abstract}

\section{Introduction and summary}
\label{intr}

A semigroup $S$ is called an \emph{epigroup} if some power of each element of $S$ lies in a subgroup of $S$. The class of epigroups is quite wide. It includes, in particular, all \emph{completely regular} semigroups (i.e., unions of groups) and all \emph{periodic} semigroups (i.e., semigroups in which every element has an idempotent power). Epigroups have been intensely studied in the literature under different names since the end of 1950's. An overview of results obtained here is given in the fundamental work by L.\,N.\,Shevrin~\cite{Shevrin-94} and his survey~\cite{Shevrin-05}.

It is natural to consider epigroups as \emph{unary semigroups}, i.e., semigroups equipped with an additional unary operation. This operation is defined in the following way. If $S$ is an epigroup and $a\in S$, then some power of $a$ lies in a maximal subgroup of $S$. We denote this subgroup by $G_a$. The unit element of $G_a$ is denoted by $a^\omega$. It is well known (see, e.g.,~\cite{Shevrin-94}) that the element $a^\omega$ is well defined and $aa^\omega=a^\omega a\in G_a$. We denote the inverse of $aa^\omega$ in $G_a$ by $\overline a$. The map $a\mapsto\,\overline a$ is the unary operation on $S$ mentioned above. The element $\overline a$ is called the \emph{pseudoinverse} of $a$. Throughout this article we consider epigroups as algebras with two operations: multiplication and pseudoinversion. In particular, this allows us to speak about varieties of epigroups as algebras with these two operations. An investigation of epigroups in the framework of the theory of varieties was promoted by L.\,N.\,Shevrin in~\cite{Shevrin-94}. An overview of first results obtained here may be found in~\cite[Section~2]{Shevrin-Vernikov-Volkov-09}.{\sloppy

}

It is well known (see, e.g.,~\cite{Shevrin-94,Shevrin-05}) that the class of all epigroups is not a variety. In other words, the variety of unary semigroups generated by this class contains not only epigroups. Denote this variety by $\mathcal{EPI}$. We note that an identity basis of the variety $\mathcal{EPI}$ is known. This result was announced in 2000 by Zhil'tsov~\cite{Zhil'tsov-00}, and its proof was rediscovered recently by Mikhailova~\cite{Mikhailova-13} (some related results can also be found in~\cite{Costa-13}).

If $\mathcal V$ is a semigroup [epigroup] variety then we denote by $\overleftarrow{\mathcal V}$ the variety consisting of all semigroups [epigroups] dual (that is, antiisomorphic) to the semigroups [epigroups] from $\mathcal V$. It is evident that the map $\delta$ of the lattice of all semigroup varieties \textbf{SEM} [the lattice of all epigroup varieties \textbf{EPI}] into itself given by the rule $\delta(\mathcal V)=\overleftarrow{\mathcal V}$ for every variety $\mathcal V$ is an automorphism of this lattice. The question whether there are non-trivial automorphisms of the lattice $\mathbf{SEM}$ [the lattice \textbf{EPI}] different from $\delta$ is still open. We notice that there exist infinitely many non-trivial injective endomorphisms of the lattice \textbf{SEM}. Namely, let $\mathcal V$ be a semigroup variety given by the identities $\{u_i=v_i\mid i\in I\}$, $m$ and $n$ non-negative integers, and $x_1,\dots,x_m,y_1,\dots,y_n$ letters that do not occur in the words $u_i$ and $v_i$ for all $i\in I$. Let $\mathcal V^{m,n}$ be the semigroup variety given by the identities
$$\{x_1\cdots x_mu_iy_1\cdots y_n=x_1\cdots x_mv_iy_1\cdots y_n\mid i\in I\}\ldotp$$
It has been verified in~\cite{Kopamu-03} that $\mathcal V^{m,n}$ does not depend on the choice of an identity basis of the variety $\mathcal V$ and the map $\mathcal{V\mapsto V}^{m,n}$ is an injective endomorphism of the lattice \textbf{SEM}. The first main result of the present paper is an epigroup analogue of this fact.

In order to formulate this result, we need some definitions and notation. We denote by $F$ the free unary semigroup. The unary operation on $F$ will be denoted by $\overline{\phantom x}$. Elements of $F$ are called \emph{unary words} or simply \emph{words}. Let $\Sigma$ be a system of identities written in the language of unary semigroups. Then $K_\Sigma$ stands for the class of all epigroups satisfying $\Sigma$. The class $K_\Sigma$ is not obliged to be a variety because it maybe not closed under taking of infinite Cartesian products (see, e.g.,~\cite[Subsection~2.3]{Shevrin-05} or Example~\ref{not closed} below). A complete classification of identity system $\Sigma$ such that $K_\Sigma$ is a variety is provided by Proposition~\ref{when var} below. If $\Sigma$ has this property, then we will write $\mathbf V[\Sigma]$ along with (and in the same sense as) $K_\Sigma$. We denote the variety of unary semigroups defined by the identity system $\Sigma$ by $\var \Sigma$. Denote the set of all identities that hold in any epigroup by $\Delta$. Thus $\mathcal{EPI}=\var \Delta$. Let $\var_E\Sigma=\mathcal{EPI}\wedge\var \Sigma=\var (\Sigma\cup\Delta)$ (here the symbol $\wedge$ denotes the meet of varieties). Clearly, if the class $K_\Sigma$ is not a variety then $\var_E\Sigma$ contains some unary semigroups that are not epigroups. Moreover, the classes $K_\Sigma$ and $\var_E\Sigma$ may differ even when $K_\Sigma$ is a variety (see Example~\ref{V_Sigma ne var_E Sigma} below). The first main result of the paper is the following

\begin{theorem}
\label{endomorphisms}
Let $\mathcal V=\var_E\{u_i=v_i \mid i\in I\}$ be an epigroup variety, $m$ and $n$ non-negative numbers, and $x_1,\dots,x_m,y_1,\dots,y_n$ letters that do not occur in the words $u_i$ and $v_i$ for all $i\in I$. Put
$$\mathcal V^{m,n}=\var_E\{x_1\cdots x_mu_iy_1\cdots y_n=x_1\cdots x_mv_iy_1\cdots y_n \mid i\in I\}\ldotp$$
Then the variety $\mathcal V^{m,n}$ is an epigroup variety and the map $\mathcal{V\mapsto V}^{m,n}$ is an injective endomorphism of the lattice $\mathbf{EPI}$.
\end{theorem}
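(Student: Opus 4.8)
The plan is to realize the operator $\mathcal V\mapsto\mathcal V^{m,n}$ through a single family of congruences, so that all four assertions (well-definedness, that $\mathcal V^{m,n}$ lies in $\mathbf{EPI}$, the endomorphism property, and injectivity) reduce to soft arguments together with one hard lemma. For an epigroup $S$ I would define a binary relation $\rho_{m,n}$ on $S$ by declaring $a\mathrel{\rho_{m,n}}b$ to mean that
$$x_1\cdots x_m\,a\,y_1\cdots y_n=x_1\cdots x_m\,b\,y_1\cdots y_n\quad\text{for all }x_1,\dots,x_m,y_1,\dots,y_n\in S.$$
The target is the description $\mathcal V^{m,n}=\{S\in\mathcal{EPI}\mid S/\rho_{m,n}\in\mathcal V\}$. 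This description mentions no identity basis, so once it is established the well-definedness of $\mathcal V^{m,n}$ (independence of the basis $\{u_i=v_i\}$) is automatic; moreover the right-hand side is visibly a subvariety of $\mathcal{EPI}$, which is what the assertion ``$\mathcal V^{m,n}$ is an epigroup variety'' amounts to in $\mathbf{EPI}$ (Proposition~\ref{when var} classifies the corresponding identity systems).

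First I would verify that $\rho_{m,n}$ is a congruence of the \emph{semigroup} reduct of $S$. Compatibility with multiplication is routine: a factor appended on the right (respectively on the left) is absorbed into the block $y_1\cdots y_n$ (respectively $x_1\cdots x_m$), using that the defining condition quantifies over all choices of the flanking elements; along the way one checks $\rho_{m,n}\subseteq\rho_{m,n+1}$ and $\rho_{m,n}\subseteq\rho_{m+1,n}$, which is exactly what legitimizes the absorption. A direct substitution argument then shows, just as in the semigroup case of~\cite{Kopamu-03}, that for every $S$ one has $S\models x_1\cdots x_mu_iy_1\cdots y_n=x_1\cdots x_mv_iy_1\cdots y_n$ for all $i$ if and only if $S/\rho_{m,n}\models u_i=v_i$ for all $i$; this is precisely the asserted description, \emph{provided} $S/\rho_{m,n}$ is again an epigroup in the unary signature.

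The one genuinely hard step, and the point at which the epigroup setting departs from~\cite{Kopamu-03}, is to prove that $\rho_{m,n}$ is compatible not only with multiplication but also with pseudoinversion, i.e. that $a\mathrel{\rho_{m,n}}b$ implies $\overline a\mathrel{\rho_{m,n}}\overline b$; only then is $S/\rho_{m,n}$ a unary quotient and the description correct. In a plain semigroup there is no unary operation and this point is vacuous. The obstruction is that, in an intermediate word of an equational derivation, the fresh flanking letters sit at the two extreme ends, whereas a rewriting step may occur deep inside a subterm $\overline{\,\cdots\,}$, where those letters supply no adjacent context. To overcome this I would work inside $\mathcal{EPI}$ with the identities of $\Delta$ that govern pseudoinversion, aiming to express $x_1\cdots x_m\,\overline a\,y_1\cdots y_n$ in terms of the family $\{x_1\cdots x_m\,a^{k}\,y_1\cdots y_n\mid k\ge1\}$, which $a\mathrel{\rho_{m,n}}b$ leaves unchanged because $a^k\mathrel{\rho_{m,n}}b^k$ by the multiplicative congruence already in hand. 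I expect this compatibility lemma to be the main difficulty of the whole argument.

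Granting the description $\mathcal V^{m,n}=\{S\in\mathcal{EPI}\mid S/\rho_{m,n}\in\mathcal V\}$, the remaining assertions are formal. Monotonicity of the operator is immediate, as is preservation of meets (the relation $\rho_{m,n}$ depends only on $S$, not on $\mathcal V$); meets can also be read off identity bases, since the flanking of $\{u_i=v_i\}\cup\{u'_j=v'_j\}$ is the union of the two flankings. Preservation of joins follows from monotonicity together with the remark that a member of $\mathcal V^{m,n}\vee\mathcal W^{m,n}$ is a homomorphic image of a subdirect product of members of $\mathcal V^{m,n}$ and $\mathcal W^{m,n}$, whose $\rho_{m,n}$-quotient therefore lands in $\mathcal V\vee\mathcal W$. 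Finally, for injectivity one observes that $\mathcal V$ and $\mathcal V^{m,n}$ have exactly the same $(m,n)$-reduced members, namely those $S$ on which $\rho_{m,n}$ is the equality relation (for such $S$ one has $S/\rho_{m,n}=S$); since the free objects of $\mathcal V$ are reduced and generate $\mathcal V$, the equality $\mathcal V^{m,n}=\mathcal W^{m,n}$ forces $\mathcal V=\mathcal W$, exactly as in the lattice-theoretic part of~\cite{Kopamu-03}.
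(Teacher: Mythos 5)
Your route is genuinely different from the paper's: the paper works syntactically with deductions (Lemma~\ref{deduction in EPI}), proving that flanking preserves consequence (Lemma~\ref{letter on right}) and that the flanking letter can be cancelled relative to a flanked system (Lemma~\ref{vz=wz to v=w}), whereas you propose a semantic realization via the congruences $\rho_{m,n}$ in the spirit of~\cite{Kopamu-03}. Part of your plan is sound, and the step you single out as the main difficulty is in fact soft: $\rho_{m,n}$ is a congruence of the semigroup reduct, the quotient of an epigroup by a semigroup congruence is again an epigroup (powers land in images of subgroups), and a surjective semigroup homomorphism between epigroups automatically commutes with pseudoinversion; hence $a\mathrel{\rho_{m,n}}b$ implies $\overline a\mathrel{\rho_{m,n}}\overline b$ with no computation at all. (You do still owe the argument that $\var_E$ of the flanked system consists of epigroups --- your description quantifies over $S\in\mathcal{EPI}$, which contains non-epigroups --- but this follows because flanking preserves mixed and heterotypical identities, so Lemma~\ref{V_Sigma=var_E Sigma} and Proposition~\ref{when var} apply.)

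The genuine gaps are in the two hard assertions, joins and injectivity. For joins, your subdirect-product remark proves only $\mathcal V^{m,n}\vee\mathcal W^{m,n}\subseteq(\mathcal V\vee\mathcal W)^{m,n}$, which already follows from monotonicity; the content of join-preservation is the reverse inclusion, and nothing in the proposal addresses it. For injectivity, the premise ``the free objects of $\mathcal V$ are reduced'' is false in general: it is equivalent to the cancellation property that $\mathcal V\models x_1\cdots x_mu\,y_1\cdots y_n=x_1\cdots x_mv\,y_1\cdots y_n$ (fresh flanking letters) forces $\mathcal V\models u=v$, and already in the free object of $\mathbf V[x_1x_2x_3=0]$ the distinct elements $ab$ and $ba$ are $\rho_{m,n}$-equivalent for every $(m,n)\ne(0,0)$, since all flanked products vanish. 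Nor are the $\rho_{m,n}$-reduced members of $\mathcal V$ enough to generate $\mathcal V$ in such examples. Both missing steps are precisely what the paper's cancellation lemma supplies: Lemma~\ref{vz=wz to v=w} shows that if $ux=vx$ holds in $\var_E\{p_ix=q_ix\}$ then $u=v$ holds in $\var_E\{p_i=q_i\}$, a statement relative to the \emph{flanked} system rather than to an arbitrary variety, proved by induction on a deduction using the decomposition $u=u^\ast x$ of Lemma~\ref{w=w*z in EPI}. Some such relative cancellation argument appears unavoidable, and the congruence picture does not yield it for free; as it stands the proposal establishes well-definedness and meet-preservation but not the theorem.
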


An examination of semigroup varieties shows that properties of a variety depend in an essential way on properties of nilsemigroups belonging to the variety. This gives rise to the following definitions. A semigroup variety $\mathcal V$ is called a \emph{variety of finite degree} if all nilsemigroups in $\mathcal V$ are nilpotent. If $\mathcal V$ has a finite degree then it is said to be a \emph{variety of degree} $n$ if the nilpotency degrees of all nilsemigroups in $\mathcal V$ do not exceed $n$ and $n$ is the least number with this property. Semigroup varieties of finite degree and some natural subclasses of this class of varieties were investigated in~\cite{Monzo-Vernikov-11,Sapir-Sukhanov-81,Tishchenko-90,Tishchenko-Volkov-95,Vernikov-08} and other articles, see also~\cite[Section 8]{Shevrin-Sukhanov-89}).

It is well known and may be easily verified that, in a periodic semigroup varieties, pseudoinversion may be expressed via multiplication. Indeed, if an epigroup satisfies the identity
\begin{equation}
\label{x^p=x^{p+q}}
x^p=x^{p+q}
\end{equation}
for some natural numbers $p$ and $q$, then the identity
\begin{equation}
\label{*x=x^{(p+1)q-1}}
\overline x\,=x^{(p+1)q-1}
\end{equation}
holds in this epigroup.
If $p>1$, then the simpler identity
\begin{equation}
\label{*x=x^{pq-1}}
\overline x\,=x^{pq-1}
\end{equation}
is valid. This means that periodic varieties of epigroups may be identified with periodic varieties of semigroups. Semigroup varieties of finite degree are periodic, whence they may be considered as epigroup varieties. It seems to be natural to extend the notions of varieties of finite degree or of degree $n$ to all epigroup varieties. The definitions of epigroup varieties of finite degree or degree $n$ repeat literally the definitions of the same notions for semigroup varieties.

In~\cite[Theorem~2]{Sapir-Sukhanov-81}, semigroup varieties of finite degree were characterized in several ways. In particular, it was proved there that a semigroup variety $\mathcal V$ has a finite degree if and only if it satisfies an identity of the form
\begin{equation}
\label{equ-gen}
x_1\cdots x_n=w
\end{equation}
for some natural $n$ and some word $w$ of length $>n$. Moreover, the proof of this result easily implies that $\mathcal V$ has a degree $\le n$ if and only if it satisfies an identity of the form~\eqref{equ-gen} for some word $w$ of length $>n$. For varieties of degree 2, this equational characterization was made more specific in~\cite[Lemma~3]{Golubov-Sapir-82}. Namely, it was verified there that a semigroup variety has degree $\le2$ if and only if it satisfies an identity of the form $xy=w$ where $w$ is one of the words $x^{m+1}y$, $xy^{m+1}$ or $(xy)^{m+1}$ for some natural $m$. In~\cite[Proposition~2.11]{Vernikov-08}, an analogue of this result of~\cite{Sapir-Sukhanov-81} was obtained for semigroup varieties of degree $\le n$ with arbitrary $n$ (see Proposition~\ref{deg n semi} below). The second objective of this article is to extend the mentioned results of~\cite{Sapir-Sukhanov-81,Vernikov-08} to epigroup varieties. The proof of the corresponding statement makes use of Theorem~\ref{endomorphisms}.

In order to formulate the second main result of this paper, we need some more definitions and notation. A \emph{semigroup word} is a word that does not involve the unary operation.  If $w\in F$, then $\ell(w)$ stands for the length of $w$; we define the length as usual for semigroup words and assume that the length of any non-semigroup word is infinite. If $x$ is a letter and $w$ is a word such that $x$ does not occur in $w$, the pair of identities $wx=xw=w$ is written as the symbolic identity $w=0$. Notice that this notation is justified because a semigroup with such identities has a zero element and all values of the word $w$ in this semigroup are equal to zero. If a system $\Sigma$ of unary identities is such that the class $K_\Sigma$ consists of periodic epigroups (in particular, of nilsemigroups), then $K_\Sigma$ is a periodic semigroup variety, and therefore, is an epigroup variety. Thus, the notation $\mathbf V[\Sigma]$ is well justified in this case. We often use this observation below without any additional reference. Put
\begin{align*}
\mathcal F&=\mathbf V[x^2=0,\,xy=yx],\\
\mathcal F_k&=\mathbf V[x^2=x_1\cdots x_k=0,\,xy=yx]
\end{align*}
where $k$ is an arbitrary natural number. The second main result of the paper is the following

\begin{theorem}
\label{deg fin epi}
For an epigroup variety $\mathcal V$, the following are equivalent:
\begin{itemize}
\item[$1)$]$\mathcal V$ is a variety of finite degree;
\item[$2)$]$\mathcal{V\nsupseteq F}$;
\item[$3)$]$\mathcal V$ satisfies an identity of the form~\eqref{equ-gen} for some natural $n$ and some unary word $w$ with $\ell(w)>n$;
\item[$4)$]$\mathcal V$ satisfies an identity of the form
\begin{equation}
\label{equ-epi}
x_1\cdots x_n=x_1\cdots x_{i-1}\cdot\overline{\overline{x_i\cdots x_j}}\cdot x_{j+1}\cdots x_n
\end{equation}
for some $i$, $j$ and $n$ with $1\le i\le j\le n$.
\end{itemize}
\end{theorem}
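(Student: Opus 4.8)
The plan is to establish the two blocks $1)\Leftrightarrow 2)$ and $1)\Rightarrow 4)\Rightarrow 3)\Rightarrow 1)$; together these give all four equivalences, and three of the implications are short. For $4)\Rightarrow 3)$ it suffices to observe that the right-hand side of~\eqref{equ-epi} is a non-semigroup word, so by our length convention it has infinite length, in particular length $>n$; hence~\eqref{equ-epi} is an instance of~\eqref{equ-gen}. For $3)\Rightarrow 1)$ I would pass to an arbitrary nilsemigroup $N\in\mathcal V$, where pseudoinversion is the constant map sending everything to $0$: if the word $w$ in~\eqref{equ-gen} is a non-semigroup word then it contains a pseudoinverse of a nonempty subword and so evaluates to $0$ on $N$, whence $N$ satisfies $x_1\cdots x_n=0$ and is nilpotent; if $w$ is a semigroup word of length $>n$ then the semigroup reduct of $N$ falls under the Sapir--Sukhanov criterion~\cite{Sapir-Sukhanov-81} and is again nilpotent. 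Finally $1)\Rightarrow 2)$ is monotonicity: finite degree passes to subvarieties, while $\mathcal F$ is not of finite degree because its free object is a non-nilpotent nilsemigroup, so $\mathcal V\supseteq\mathcal F$ is impossible.

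For $2)\Rightarrow 1)$ I would argue contrapositively. If $\mathcal V$ is not of finite degree it contains a non-nilpotent nilsemigroup $N$, and here the auxiliary varieties $\mathcal F_k$ enter. Since $\mathcal F=\bigvee_k\mathcal F_k$ (the free object of $\mathcal F$ is the directed union of those of the $\mathcal F_k$), it is enough to show that $\var N$ contains every $\mathcal F_k$, for then $\mathcal V\supseteq\mathcal F$, contradicting $2)$. This reduces to the word-combinatorial fact that a non-nilpotent nil semigroup has nonzero products of every length, from which a Ramsey-type extraction produces, for each $k$, the free object of $\mathcal F_k$ inside $\var N$; appealing to the known description of minimal non-nilpotent nil varieties keeps this step honest.

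The crux is $1)\Rightarrow 4)$, and this is where Theorem~\ref{endomorphisms} is indispensable. Let $\mathcal{CR}=\var_E\{x=\overline{\overline x}\}$ be the variety of completely regular epigroups. Applying the endomorphism $\mathcal W\mapsto\mathcal W^{m,n}$ to $\mathcal{CR}$ produces, by Theorem~\ref{endomorphisms}, a genuine epigroup variety whose defining identity is $x_1\cdots x_m\,\overline{\overline z}\,y_1\cdots y_n=x_1\cdots x_m\,z\,y_1\cdots y_n$; renaming variables and allowing a block $z=x_i\cdots x_j$ this is exactly an identity of the form~\eqref{equ-epi}. Hence it suffices to prove that a finite-degree variety $\mathcal V$ is contained in $\mathcal{CR}^{m,n}$ for suitable $m,n$. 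The content of this containment is the structural lemma that, in an epigroup whose nilsemigroups are nilpotent of degree $\le d$, any sufficiently long product absorbs the idempotent $z^\omega$: since $\overline{\overline z}=zz^\omega$, replacing a factor $z$ by $\overline{\overline z}$ amounts to inserting $z^\omega$, and once the complementary factors drive the relevant product into the completely regular part of the epigroup — which happens as soon as their number reaches the nilpotency degree $d$ — this insertion changes nothing.

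This last lemma is the main obstacle. Proving it rigorously requires the Shevrin structure theory of epigroups to control the interaction of the group part and the nil part in an \emph{arbitrary} member of $\mathcal V$, not merely in the convenient special case of an ideal extension of a group by a nilsemigroup. The remaining bookkeeping — matching the indices $i$, $j$, $n$ in~\eqref{equ-epi} to the actual degree and locating the block $x_i\cdots x_j$ — is supplied by the semigroup characterization of degree $\le n$ in Proposition~\ref{deg n semi}, which furnishes the combinatorial skeleton of the identity; finite degree then contributes the nilpotency bound, while complete regularity, transported through Theorem~\ref{endomorphisms}, upgrades the semigroup identity to the unary identity~\eqref{equ-epi}.
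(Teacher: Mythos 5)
Your reductions $4)\Rightarrow3)$, $3)\Rightarrow1)$ and $1)\Rightarrow2)$ are fine, and your idea of obtaining~\eqref{equ-epi} by applying Theorem~\ref{endomorphisms} to the completely regular variety is attractive. But the structural lemma on which your whole proof of $1)\Rightarrow4)$ rests is false. You claim that in an epigroup variety of finite degree $d$, any product of $\ge d$ factors lands in the completely regular part $\Gr S$, so that inserting the idempotent $z^\omega$ (i.e.\ replacing $z$ by $\overline{\overline z}=zz^\omega$) changes nothing. Take the variety $\mathcal P=\var P$ with $P=\{e,a,0\}$, $e^2=e$, $ea=a$, $ae=a^2=0$. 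Every nilsemigroup in $\mathcal P$ satisfies $xy=x^2y=\dots=0$, so $\mathcal P$ has degree $2$; yet $e\cdots ea=a\notin\Gr P=\{e,0\}$, so arbitrarily long products of elements of $P$ stay outside $\Gr P$. Indeed, Proposition~\ref{GrS is right ideal} and Lemma~\ref{without P and P-dual} show that ``long products become group elements'' holds precisely when $\mathcal V$ excludes $\mathcal P$ and $\overleftarrow{\mathcal P}$ --- and that exclusion is exactly the easy case. The paper handles the remaining case (say $\mathcal{P\subseteq V}$) by a genuine induction on the degree: Lemma~\ref{x_1...x_n=w in P} forces a valid identity $x_1\cdots x_m=v'x_m$, the last letter is stripped off to get a variety of degree $\le m-1$, the inductive hypothesis is applied, and Theorem~\ref{endomorphisms} is used to glue the letter back on. Note also that your target containment $\mathcal V\subseteq\mathcal{CR}^{m,n}$ would yield~\eqref{equ-epi} with a \emph{singleton} block $i=j$, which is strictly stronger than what the theorem asserts (the block $x_i\cdots x_j$ produced by the induction may genuinely have several letters); you offer no argument that the singleton form is always attainable, and I see no reason it should be.

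There is a second, smaller gap in $2)\Rightarrow1)$. Your contrapositive reduces to the claim that a non-nilpotent nil-epigroup generates a variety containing every $\mathcal F_k$, which you support only by a ``Ramsey-type extraction'' and an appeal to ``the known description of minimal non-nilpotent nil varieties''. That claim is essentially the hard half of the Sapir--Sukhanov theorem; if you want to use it you must either prove it or reduce carefully to the semigroup statement of~\cite{Sapir-Sukhanov-81} (which is possible, since on nilsemigroups of bounded index the unary operation is the constant map to zero and $H$, $S$, $P$ agree in the plain and unary signatures, but none of this is in your text). The paper avoids the issue entirely: from $\mathcal{V\nsupseteq F}$ it extracts an identity $u=v$ failing in $\mathcal F$, normalizes $u$ to a linear word, and invokes Lemma~\ref{fin deg or permut} together with the fact that $\mathcal F$ satisfies all permutational identities. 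I would encourage you to adopt that equational route.
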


As we will see below, the proof of this theorem easily implies the following

\begin{corollary}
\label{deg n epi}
Let $n$ be an arbitrary natural number. For an epigroup variety $\mathcal V$, the following are equivalent:
\begin{itemize}
\item[$1)$]$\mathcal V$ is a variety of degree $\le n$;
\item[$2)$]$\mathcal{V\nsupseteq F}_{n+1}$;
\item[$3)$]$\mathcal V$ satisfies an identity of the form~\eqref{equ-gen} for some unary word $w$ with $\ell(w)>n$;
\item[$4)$]$\mathcal V$ satisfies an identity of the form~\eqref{equ-epi} for some $i$ and $j$ with $1\le i\le j\le n$.
\end{itemize}
\end{corollary}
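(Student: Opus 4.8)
The plan is to prove the four conditions equivalent by the cycle $1)\Rightarrow2)\Rightarrow4)\Rightarrow3)\Rightarrow1)$ --- the same cycle used for Theorem~\ref{deg fin epi}, but now carrying the numerical parameter $n$: throughout I replace $\mathcal F$ by its nilpotent truncation $\mathcal F_{n+1}$, and the finite-degree form of the Sapir--Sukhanov theorem by its degree $\le n$ refinement (Proposition~\ref{deg n semi}). Three arrows are immediate. For $1)\Rightarrow2)$ I note that the free object of $\mathcal F_{n+1}$ is a commutative nilsemigroup in which a product of $n$ distinct letters is nonzero but every product of $n+1$ letters vanishes, hence is nilpotent of degree exactly $n+1$; so $\mathcal F_{n+1}\subseteq\mathcal V$ would contradict $1)$. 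For $4)\Rightarrow3)$ the right-hand side of~\eqref{equ-epi} contains the pseudoinversion and is therefore a non-semigroup word of infinite length, making~\eqref{equ-epi} an instance of~\eqref{equ-gen} with $\ell(w)>n$. For $3)\Rightarrow1)$ I take a nilsemigroup $N\in\mathcal V$; as $0$ is its only idempotent, pseudoinversion is identically $0$ on $N$, so if $w$ is a non-semigroup word then~\eqref{equ-gen} reads $x_1\cdots x_n=0$ on $N$, while if $w$ is a semigroup word of length $>n$ the degree $\le n$ case of Proposition~\ref{deg n semi} yields the same; thus every nilsemigroup of $\mathcal V$ has degree $\le n$.

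The whole substance is $2)\Rightarrow4)$, and this is where Theorem~\ref{endomorphisms} enters. First I reformulate $4)$. Put $\mathcal D_r=\var_E\{z_1\cdots z_r=\overline{\overline{z_1\cdots z_r}}\}$, the variety of epigroups in which every product of $r$ factors is a group (completely regular) element. Then the variety $\mathcal D_{\,j-i+1}^{\,i-1,\,n-j}$, the image of $\mathcal D_{\,j-i+1}$ under the endomorphism $\mathcal W\mapsto\mathcal W^{\,i-1,\,n-j}$ of Theorem~\ref{endomorphisms}, is defined by precisely the identity~\eqref{equ-epi}; consequently $4)$ holds iff $\mathcal V\subseteq\mathcal D_r^{\,m,s}$ for some $r\ge1$ and $m,s\ge0$ with $m+r+s=n$. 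Left- or right-multiplying a bordered identity by one more fresh letter keeps it in the same family, so $\mathcal D_r^{\,m,s}\subseteq\mathcal D_r^{\,m',s'}$ whenever $m'\ge m$ and $s'\ge s$; this monotonicity lets me pad affixes freely, so it will suffice to produce \emph{some} inclusion $\mathcal V\subseteq\mathcal D_r^{\,m,s}$ with $m+r+s\le n$.

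To produce one I argue as follows. Since $\mathcal F_{n+1}\subseteq\mathcal F$, hypothesis $2)$ gives $\mathcal F\not\subseteq\mathcal V$, so by Theorem~\ref{deg fin epi} the variety $\mathcal V$ already satisfies some instance of~\eqref{equ-epi}, i.e.\ $\mathcal V\subseteq\mathcal D_{r_0}^{\,m_0,s_0}$. This is the delicate, genuinely epigroup-theoretic input: it asserts that long enough products are group elements even though $\mathcal V$ need not be periodic (for instance $\mathcal V$ may contain groups, where $\overline{\overline{\phantom{x}}}$ collapses to the identity map, or nilpotent parts, where it collapses to $0$). It remains only to bound the length. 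As $\mathcal F_{n+1}$ is nil, $2)$ also says the nil part of $\mathcal V$ omits $\mathcal F_{n+1}$, whence by Proposition~\ref{deg n semi} every nilsemigroup of $\mathcal V$ has degree $\le n$. Re-running the construction of~\eqref{equ-epi} with this sharper bound in place of mere finiteness forces $m_0+r_0+s_0\le n$; padding as above then delivers~\eqref{equ-epi} with $m+r+s=n$, that is with $1\le i\le j\le n$.

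The hard part is exactly this length control. One cannot shortcut it by passing to the semigroup reduct and quoting the semigroup theorem: forming reducts and generating a variety can create new nilsemigroups --- the reduct of the variety of all groups is the variety of \emph{all} semigroups --- so the reduct may fail to have finite degree even when $\mathcal V$ does. The argument must therefore stay inside $\mathbf{EPI}$, and Theorem~\ref{endomorphisms} is precisely what permits the prefix $x_1\cdots x_{i-1}$ and suffix $x_{j+1}\cdots x_n$ to be stripped off and reattached as an injective lattice endomorphism, reducing~\eqref{equ-epi} to the single core inclusion into $\mathcal D_r$; the degree bound coming from $\mathcal F_{n+1}$ and Proposition~\ref{deg n semi} then pins the block-plus-affix length at $n$.
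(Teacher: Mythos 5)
Your proof is correct in substance, and on the only genuinely hard implication it coincides with the paper's: both reduce everything to the fact, established inside the proof of Theorem~\ref{deg fin epi}, that a variety of degree $\le n$ satisfies~\eqref{equ-epi} with $1\le i\le j\le n$ (the induction on $n$ that invokes Theorem~\ref{endomorphisms}); your varieties $\mathcal D_r^{\,m,s}$ are a repackaging of that identity via the endomorphisms of Theorem~\ref{endomorphisms}, and the padding/monotonicity remark is a harmless cosmetic addition. Where you differ is in the arrangement of the cheap implications. The paper runs $1)\to4)\to3)\to2)\to1)$ and gets $2)\to1)$ in one step from Lemma~\ref{fin deg or permut}: an identity $u=v$ true in $\mathcal V$ and false in $\mathcal F_{n+1}$ may be assumed to have $u\equiv x_1\cdots x_m$ linear with $m\le n$, and since $\mathcal F_{n+1}$ is commutative the permutational alternative is excluded, so $\mathcal V$ has degree $\le m\le n$. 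You instead prove $3)\to1)$ directly (fine, and slightly more economical than the paper's $3)\to2)\to1)$) and fold the degree bound into $2)\to4)$ by first citing Theorem~\ref{deg fin epi} for finite degree and then Proposition~\ref{deg n semi} for the bound $\le n$, after which you rerun the same induction the paper reruns.

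That appeal to Proposition~\ref{deg n semi} is the one place you are too quick. The proposition is a statement about \emph{semigroup} varieties, and ``the nil part of $\mathcal V$'' is not a variety of either kind, so it cannot be quoted verbatim for the epigroup variety $\mathcal V$. To make the step honest: suppose $N\in\mathcal V$ is a nilsemigroup of nilpotency degree $>n$; by your finite-degree step $N$ is nilpotent, so the semigroup variety $\var N$ is periodic and may be identified with an epigroup subvariety of $\mathcal V$ (pseudoinversion is the zero map on every member), and Proposition~\ref{deg n semi} applied to $\var N$ yields $\mathcal F_{n+1}\subseteq\var N\subseteq\mathcal V$, contradicting $2)$. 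With that bridge inserted --- or with Lemma~\ref{fin deg or permut} used instead, as the paper does --- your argument is complete.
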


It is well known that an epigroup variety has degree 1 if and only if it satisfies the identity
\begin{equation}
\label{x=x**}
x=\,\overline{\overline x}
\end{equation}
(see Lemma~\ref{cr} below). Besides that, it is evident that a variety has degree 1 if and only if it does not contain the variety of semigroups with zero multiplication, i.e., the variety $\mathcal F_2$. The equivalence of the claims 1), 2) and 4) of Corollary~\ref{deg n epi} generalizes these known facts.

The paper consists of four sections. Section~\ref{prel} contains definitions, notation and auxiliary results we need. Section~\ref{proof of theorem 1} is devoted to the proof of Theorem~\ref{endomorphisms}, while in Section~\ref{proof of theorem 2} Theorem~\ref{deg fin epi} and Corollary~\ref{deg n epi} are proved.

\section{Preliminaries}
\label{prel}

We denote by $\Gr S$ the set of all group elements of the epigroup $S$. The following well-known fact was verified in~\cite{Munn-61}.

\begin{lemma}
\label{big powers}
If $S$ is an epigroup, $x\in S$ and $x^n\in\Gr S$ for some natural $n$ then $x^m\in\Gr S$ for every $m\ge n$.\qed
\end{lemma}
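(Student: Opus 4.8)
The plan is to reduce the statement to a computation inside a local submonoid, together with the elementary fact that a monoid element one of whose powers is invertible must itself be invertible. Since $x^n\in\Gr\,S$, the element $x^n$ lies in a maximal subgroup of $S$; as this subgroup contains a power of $x$, it must be $G_x$, whose identity is $e:=x^\omega$. Thus $ex^n=x^ne=x^n$, so $x^n$ is a unit of the local monoid $eSe$, whose group of units is precisely the maximal subgroup $H_e=G_x$ by classical Green's theory.

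First I would record that $x$ commutes with $e$. This is immediate from the basic epigroup identity $xx^\omega=x^\omega x$, so $xe=ex$; in particular $e(xe)e=xe$, i.e.\ $xe\in eSe$. Next I would compute, using $xe=ex$ and $e^n=e$, that $(xe)^n=x^ne^n=x^ne=x^n$, which is a unit of the monoid $eSe$. From the elementary lemma that an element $a$ of a monoid with $a^n$ invertible is itself invertible (with two-sided inverse $a^{\,n-1}(a^n)^{-1}$), it follows that $xe$ is a unit of $eSe$, that is, $xe\in G_x$.

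Finally, for any $m\ge n$ I would write $x^m=(xe)^{m-n}x^n$; this holds because $(xe)^{m-n}x^n=x^{m-n}e\,x^n=x^{m-n}x^n=x^m$, using $xe=ex$ and $ex^n=x^n$. Since $xe,x^n\in G_x$ and $G_x$ is a group, the product $(xe)^{m-n}x^n$ lies in $G_x$; hence $x^m\in G_x\subseteq\Gr\,S$, as required.

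The only genuinely delicate point is the very first one: identifying the idempotent at which $x^n$ sits with $x^\omega$ and establishing $xe=ex$, that is, linking the purely semigroup-theoretic hypothesis that $x^n$ is a group element to the unary operation of the epigroup. Once the commutation $xe=ex$ and the description of $H_e$ as the group of units of $eSe$ are in place, the remaining steps are routine monoid algebra and present no real difficulty.
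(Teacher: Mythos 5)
The paper offers no proof of this lemma: it is stated as a known fact and attributed to Munn's 1961 paper, so there is nothing to compare line by line, and your argument must be judged on its own. It is correct as written. The reduction to the local monoid $eSe$, the identification of the maximal subgroup at $e$ with the group of units of $eSe$, the elementary observation that an element $a$ of a monoid with $a^n$ invertible is itself invertible (the inverse $a^{n-1}(a^n)^{-1}$ is indeed two-sided, since $a$ commutes with $a^n$ and hence with $(a^n)^{-1}$), and the final computation $x^m=(xe)^{m-n}x^n$ are all sound. The one point worth reworking is exactly the one you flag as delicate: you import the identity $xx^\omega=x^\omega x$ in order to get $xe=ex$, which both drags the unary operation into a purely multiplicative statement and risks circularity, since in the standard development the properties of $x^\omega$ (including $xx^\omega=x^\omega x\in G_x$) are usually established after, or together with, this very lemma. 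The detour is avoidable by applying your own unit lemma to $x^m$ rather than to $xe$: writing $e$ for the identity of the maximal subgroup $H_e$ containing $x^n$, one has $ex^m=(ex^n)x^{m-n}=x^m$ and $x^me=x^{m-n}(x^ne)=x^m$ for every $m\ge n$, directly from $ex^n=x^ne=x^n$ and without any commutation of $x$ with $e$; thus $x^m\in eSe$, and $(x^m)^n=(x^n)^m$ lies in the group of units of $eSe$, so $x^m$ is a unit of $eSe$, i.e.\ $x^m\in H_e\subseteq\Gr S$. This streamlined version uses only the hypothesis that $x^n$ lies in some maximal subgroup and is therefore self-contained, whereas your version, while correct modulo the facts the paper itself declares well known, quietly leans on a piece of epigroup theory that is at least as deep as the statement being proved.
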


The next lemma collects several simple and well-known facts (see, e.g.,~\cite{Shevrin-94,Shevrin-05}).

\begin{lemma}
\label{simplest identities}
If $S$ is an epigroup and $x\in S$ then the equalities
\begin{align}
\label{x*x=(x*x)^2=(xx*)}&x\,\overline x\,=(\,x\,\overline x\,)^2=\,\overline{x\,\overline x},\\
\label{xx*=x*x=x^0}&x\,\overline x\,=\,\overline x\,x=x^\omega,\\
\label{x^0x=xx^0=x**}&x^\omega x=xx^\omega=\,\overline{\overline x},\\
\label{x*=(x^2)*x=x(x^2)*}&\overline x\,=\,\overline{x^2}\,x=x\,\overline{x^2},\\
\label{(x^n)*=(x*)^n}&\overline{x^n}\,=\,\overline x\,^n,\\
\label{x***=x*}&\overline{\overline{\overline x}}\,=\,\overline x
\end{align}
hold where $n$ is an arbitrary natural number.\qed
\end{lemma}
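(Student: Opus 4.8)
The plan is to translate the defining properties of pseudoinversion into computations inside the maximal subgroup $G_x$. Fix an epigroup $S$ and an element $x\in S$, and write $e=x^\omega$ for the identity of the maximal subgroup $G_x$. By definition $xe=ex\in G_x$, and $\overline x$ is the element inverse to $xe$ within the group $G_x$; in particular $\overline x\in G_x$, so $e\,\overline x=\overline x\,e=\overline x$ and $(\overline x)^\omega=e$. First I would record three elementary observations. (a) Since $ex=xe$, the idempotent $e$ commutes with every power of $x$, whence $x^ne=(xe)^n$ for each natural $n$. (b) If $x^k\in\Gr\,S$ then $x^{nk}=(x^n)^k\in\Gr\,S$ by Lemma~\ref{big powers} and lies in $G_x$, so that $G_{x^n}=G_x$ and $(x^n)^\omega=e$ for every natural $n$. (c) Every $g\in\Gr\,S$ satisfies $\overline g=g^{-1}$, where the inverse is taken in the maximal subgroup containing $g$; indeed $g\,g^\omega=g$, so $\overline g=(g\,g^\omega)^{-1}=g^{-1}$.

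Next I would establish~\eqref{xx*=x*x=x^0} and~\eqref{x^0x=xx^0=x**}, from which the remaining identities follow quickly. For~\eqref{xx*=x*x=x^0}: using $\overline x=e\,\overline x=\overline x\,e$ together with the fact that $\overline x$ is inverse to $xe$ in $G_x$, one computes $x\,\overline x=(xe)\overline x=e$ and $\overline x\,x=\overline x(ex)=\overline x(xe)=e$, so that $x\,\overline x=\overline x\,x=e=x^\omega$. For~\eqref{x^0x=xx^0=x**}: the equality $x^\omega x=xx^\omega$ is immediate from $ex=xe$, while, since $\overline x\in\Gr\,S$ with $(\overline x)^\omega=e$, observation~(c) gives $\overline{\overline x}=(\overline x)^{-1}=xe=xx^\omega$, the inverse of $\overline x=(xe)^{-1}$ in $G_x$ being $xe$.

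Finally I would deduce the other four identities. In~\eqref{x*x=(x*x)^2=(xx*)}, the element $x\,\overline x=e$ is idempotent, so $(x\,\overline x)^2=e=x\,\overline x$, and $\overline{x\,\overline x}=\overline e=e^{-1}=e$ by~(c); hence all three expressions equal $e$. For~\eqref{(x^n)*=(x*)^n} I would combine~(a) and~(b): $\overline{x^n}=(x^ne)^{-1}=\bigl((xe)^n\bigr)^{-1}=\bigl((xe)^{-1}\bigr)^n=(\overline x)^n$. Identity~\eqref{x*=(x^2)*x=x(x^2)*} is the case $n=2$ combined with~\eqref{xx*=x*x=x^0}: from $\overline{x^2}=(\overline x)^2$ one gets $\overline{x^2}\,x=\overline x(\overline x\,x)=\overline x\,e=\overline x$ and $x\,\overline{x^2}=(x\,\overline x)\overline x=e\,\overline x=\overline x$. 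Lastly, by~\eqref{x^0x=xx^0=x**} we have $\overline{\overline x}=xe\in\Gr\,S$, so~(c) yields $\overline{\overline{\overline x}}=\overline{xe}=(xe)^{-1}=\overline x$, which is~\eqref{x***=x*}.

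There is no serious obstacle here: the whole content is a faithful rewriting of the definition of pseudoinversion as statements about the group $G_x$. The only point requiring care is that $x$ itself need not belong to $G_x$, so that $ex=x$ fails in general and the idempotent $e$ must be kept attached throughout the manipulations; once observations~(a)--(c) are in place, each of the six identities reduces to a one-line computation in the group $G_x$.
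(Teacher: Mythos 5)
The paper gives no proof of this lemma at all: it is stated with a \qed and attributed to the literature (Shevrin's papers), so there is no in-paper argument to compare yours against. Your proof is correct and is essentially the standard one: once you fix $e=x^\omega$ and observe that $\overline x$ is by definition the inverse of $xe$ in $G_x$, each of the six equalities reduces to a one-line computation with inverses in that group, and your preliminary observations (a)--(c) supply exactly what is needed. The only ingredient you use that deserves an explicit word is the fact, implicit in your observation (b), that every power of $x$ lying in $\Gr S$ lies in the \emph{same} maximal subgroup $G_x$ (so that $G_{x^n}=G_x$ and $(x^n)^\omega=e$); this is part of the well-definedness of $G_a$ asserted in the paper's introduction and goes back to Munn, so the step is legitimate, but it is the one place where more than the bare definition of pseudoinversion is invoked.
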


The equalities~\eqref{xx*=x*x=x^0} show that the expression $v^\omega$ is well defined in epigroup identities as a short form of the term $v\,\overline v$. So, the equalities~\eqref{x*x=(x*x)^2=(xx*)}--\eqref{x***=x*} are identities valid in arbitrary epigroup. We need the following generalization of~\eqref{xx*=x*x=x^0}.

\begin{corollary}
\label{one more identity}
An arbitrary epigroup satisfies the identities
\begin{equation}
\label{x^n(x*)^n=(x*)^nx^n=x^0}
x^n\,\overline x\,^n=\,\overline x\,^nx^n=x^\omega
\end{equation}
for any natural number $n$.
\end{corollary}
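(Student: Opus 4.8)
The plan is to reduce everything to the single-power identity \eqref{xx*=x*x=x^0} and then propagate it to arbitrary $n$ by induction, using \eqref{(x^n)*=(x*)^n} to identify $\overline{x}^n$ with $\overline{x^n}$. The first equality $x^n\overline{x}^n=\overline{x}^nx^n$ is in fact immediate: substituting the term $x^n$ for the letter $x$ in \eqref{xx*=x*x=x^0} yields $x^n\overline{x^n}=\overline{x^n}x^n=(x^n)^\omega$, and replacing $\overline{x^n}$ by $\overline{x}^n$ via \eqref{(x^n)*=(x*)^n} gives $x^n\overline{x}^n=\overline{x}^nx^n$. Thus the remaining content of the corollary is the equality $(x^n)^\omega=x^\omega$, that is, $x^n\overline{x}^n=x\overline{x}$.

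First I would record an auxiliary fact: $x^\omega$ acts as a two-sided identity on $\overline{x}$. Combining \eqref{x*=(x^2)*x=x(x^2)*} with the instance $n=2$ of \eqref{(x^n)*=(x*)^n} gives $\overline{x}=x\overline{x}^2=\overline{x}^2x$, whence $x^\omega\overline{x}=(x\overline{x})\overline{x}=x\overline{x}^2=\overline{x}$ and, dually, $\overline{x}\,x^\omega=\overline{x}(\overline{x}x)=\overline{x}^2x=\overline{x}$. Iterating these two equalities shows $x^\omega\overline{x}^n=\overline{x}^nx^\omega=\overline{x}^n$ for every natural $n$.

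With this in hand the main claim follows by a straightforward induction on $n$. The base case $n=1$ is exactly \eqref{xx*=x*x=x^0}. For the inductive step I would write $x^{n+1}\overline{x}^{n+1}=x^n(x\overline{x})\overline{x}^n=x^nx^\omega\overline{x}^n$ and absorb the central factor by the auxiliary fact $x^\omega\overline{x}^n=\overline{x}^n$, obtaining $x^n\overline{x}^n$, which equals $x^\omega$ by the induction hypothesis. The companion identity $\overline{x}^nx^n=x^\omega$ is proved by the symmetric induction, writing $\overline{x}^{n+1}x^{n+1}=\overline{x}^n(\overline{x}x)x^n=\overline{x}^nx^\omega x^n$ and absorbing $x^\omega$ on the left via $\overline{x}^nx^\omega=\overline{x}^n$.

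I expect the only point requiring genuine care to be the auxiliary fact that $x^\omega$ fixes $\overline{x}$; once the relation $\overline{x}=x\overline{x}^2=\overline{x}^2x$ is extracted from Lemma~\ref{simplest identities}, the two inductions telescope mechanically. One could alternatively argue semantically, noting that a sufficiently high power of $x^n$ is a high power of $x$ and so lies in the same maximal subgroup, forcing $(x^n)^\omega=x^\omega$; but the identity-based argument above has the advantage of remaining entirely within the equational consequences of Lemma~\ref{simplest identities}.
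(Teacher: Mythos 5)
Your argument is correct, but it is organized quite differently from the paper's. The paper's proof is a one-line computation: since $x\,\overline x\,=\,\overline x\,x$ by~\eqref{xx*=x*x=x^0}, the elements $x$ and $\overline x$ commute, so $x^n\,\overline x\,^n=\,\overline x\,^nx^n=(x\,\overline x\,)^n=(x^\omega)^n=x^\omega$, the last step using only the idempotency of $x^\omega$ from~\eqref{x*x=(x*x)^2=(xx*)}. You instead run an induction on $n$ whose engine is the absorption identity $x^\omega\,\overline x\,=\,\overline x\,x^\omega=\,\overline x$, which you correctly extract from~\eqref{x*=(x^2)*x=x(x^2)*} and~\eqref{(x^n)*=(x*)^n}; your preliminary reduction via the substitution $x\mapsto x^n$ is also sound (though, as you in effect notice, it becomes redundant once both symmetric inductions are carried out). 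Each step of your proof checks out — in particular the regrouping $x^{n+1}\,\overline x\,^{n+1}=x^n(x\,\overline x\,)\,\overline x\,^n$ needs only associativity, and the iteration $x^\omega\,\overline x\,^n=\,\overline x\,^n$ telescopes as claimed. What the paper's route buys is brevity: it needs nothing beyond~\eqref{x*x=(x*x)^2=(xx*)} and~\eqref{xx*=x*x=x^0}. What yours buys is a reusable auxiliary fact ($x^\omega$ acts as a two-sided identity on $\overline x$, reflecting that $\overline x\in G_x$) and an argument that never has to invoke the commutation of $x$ with $\overline x$ to reshuffle a length-$2n$ product all at once. Either is acceptable; if you want the shortest path, observe that commutativity of $x$ and $\overline x$ immediately collapses $x^n\,\overline x\,^n$ to $(x\,\overline x\,)^n$.
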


\begin{proof}
Let $S$ be an epigroup and $x\in S$. The identities~\eqref{xx*=x*x=x^0} and the fact that $x^\omega$ is an idempotent in $S$ imply that $x^n\,\overline x\,^n=\,\overline x\,^nx^n=(x\,\overline x\,)^n=(x^\omega)^n=x^\omega$.
\end{proof}

The symbol $\equiv$ denotes the equality relation on $F$. The number of occurrences of multiplication or unary operation in a word $w$ is called the \emph{weight} of $w$.

\begin{lemma}
\label{identity in EPI}
Let $w$ be a non-semigroup word depending on a letter $x$ only. Then the variety $\mathcal{EPI}$ satisfies an identity
\begin{equation}
\label{w=x^p(x^q)*}
w=x^p\,\overline x\,^q
\end{equation}
for some $p\ge 0$ and some positive integer $q$.
\end{lemma}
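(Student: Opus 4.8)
The plan is to argue by induction on the weight of $w$, proving the slightly stronger statement that $\mathcal{EPI}$ satisfies $w=x^{a}\overline{x}^{b}$ for some integers $a,b\ge 0$ with $a+b\ge 1$, where moreover $b\ge 1$ whenever $w$ is a non-semigroup word. The lemma is exactly the non-semigroup case of this assertion, and I need the strengthening because in the inductive step the word under a unary operation may itself be a semigroup word. Throughout I would freely rewrite using the identities of Lemma~\ref{simplest identities} and Corollary~\ref{one more identity}, which hold in $\mathcal{EPI}$ since they were verified for an arbitrary epigroup. The base case is $w\equiv x$, giving $x=x^{1}\overline{x}^{0}$; every word of positive weight arises from smaller ones either as a product or by applying the unary operation, and I treat these two cases separately.

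\emph{Product case.} Suppose $w\equiv w_{1}w_{2}$ with $w_{1},w_{2}$ of smaller weight. By the induction hypothesis $\mathcal{EPI}$ satisfies $w_{i}=x^{a_{i}}\overline{x}^{b_{i}}$. Since $x\overline{x}=\overline{x}\,x$ by~\eqref{xx*=x*x=x^0}, the factors $x$ and $\overline{x}$ commute, so I may collect powers and obtain $w=x^{a_{1}+a_{2}}\overline{x}^{b_{1}+b_{2}}$. If $w$ is a non-semigroup word then at least one $w_{i}$ is, so $b_{1}+b_{2}\ge 1$. This case is routine once commutativity of $x$ and $\overline{x}$ is observed.

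\emph{Unary case.} This is the crux: here $w\equiv\overline{u}$ with $u$ of smaller weight, so by induction $\mathcal{EPI}$ satisfies $u=x^{a}\overline{x}^{b}$, and I must compute $\overline{x^{a}\overline{x}^{b}}$ and bring it to the form $x^{a'}\overline{x}^{b'}$ with $b'\ge 1$. The plan is first to reduce the argument by Corollary~\ref{one more identity}: if $b=0$ then it is $x^{a}$; if $a=b$ then it equals $x^{\omega}$; if $a>b\ge 1$ then $x^{a}\overline{x}^{b}=x^{a-b}x^{\omega}=\overline{\overline{x^{a-b}}}$ by~\eqref{x^0x=xx^0=x**}; and if $b>a\ge 1$ then $x^{a}\overline{x}^{b}=x^{\omega}\overline{x}^{b-a}=\overline{x}^{b-a}$, the last equality following from~\eqref{x*=(x^2)*x=x(x^2)*} and~\eqref{(x^n)*=(x*)^n}. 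Applying $\overline{\phantom{x}}$ to each reduced form then yields the claim: $\overline{x^{a}}=\overline{x}^{a}$ by~\eqref{(x^n)*=(x*)^n}; $\overline{x^{\omega}}=\overline{x\,\overline{x}}=x\,\overline{x}$ by~\eqref{x*x=(x*x)^2=(xx*)}; $\overline{\overline{\overline{x^{a-b}}}}=\overline{x}^{a-b}$ by~\eqref{x***=x*}; and $\overline{\overline{x^{b-a}}}=x^{\omega}x^{b-a}=x^{b-a+1}\overline{x}$ by~\eqref{x^0x=xx^0=x**} together with Corollary~\ref{one more identity}. In every subcase the exponent of $\overline{x}$ is positive, which completes the induction.

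I expect the only genuinely delicate point to be the bookkeeping inside the unary case: keeping the ranges $b=0$, $a=b$, $a>b$ and $b>a$ straight, and checking in each that the pseudoinverse of the reduced argument really lands back in the normal form $x^{a'}\overline{x}^{b'}$ with $b'\ge 1$. The product case and the passage from the strengthened statement to the lemma itself are immediate.
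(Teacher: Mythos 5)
Your proof is correct and follows essentially the same route as the paper's: induction on the weight of $w$, with a product case handled by commuting $x$ and $\overline x$ via~\eqref{xx*=x*x=x^0}, and a unary case split according to the sign of the difference of the two exponents. The only differences are cosmetic --- you strengthen the statement to cover semigroup words (where the paper simply writes a one-letter semigroup subword as $x^r$ on the spot), and you pre-reduce $x^a\,\overline x\,^b$ to a canonical form before applying the outer bar, whereas the paper computes $\overline{x^s\,\overline x\,^t}$ by a direct chain of the same identities.
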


\begin{proof}
We use induction on the weight of $w$.

\smallskip

\emph{Induction base}. If the weight of $w$ equals 1, then $w\equiv\,\overline x$ and the identity~\eqref{w=x^p(x^q)*} with $p=0$ and $q=1$ holds.

\smallskip

\emph{Induction step}. Suppose that the weight of the word $w$ is $i>1$. Further considerations are divided into two cases.

\smallskip

\emph{Case} 1: $w\equiv w_1w_2$ where $w_1$ and $w_2$ are words with weights less than $i$. Obviously, at least one of the words $w_1$ or $w_2$ involves the unary operation. It suffices to consider the case when $w_1$ is a non-semigroup word. By the induction assumption, the identity $w_1=x^s\,\overline x\,^t$ holds in $\mathcal{EPI}$ for some $s \ge 0$ and some positive integer $t$. If the word $w_2$ involves the unary operation, then, by the induction assumption, the identity $w_2=x^m\,\overline x\,^k$ holds in $\mathcal{EPI}$ for some $m\ge0$ and some $k>0$. If, otherwise, the word $w_2$ is a semigroup one then $w_2\equiv x^r$ for some $r$. In any case, we may apply the identity~\eqref{xx*=x*x=x^0} and conclude that $\mathcal{EPI}$ satisfies an identity of the form~\eqref{w=x^p(x^q)*}.

\smallskip

\emph{Case} 2: $w\equiv\,\overline{w_1}$ where the weight of the word $w_1$ is less than $i$. If $w_1$ is a semigroup word, then $w_1\equiv x^r$ for some $r$. Taking into account the identity~\eqref{(x^n)*=(x*)^n}, we have that the variety $\mathcal{EPI}$ satisfies the identity $w=\,\overline x\,^r$. If, otherwise, the word $w_1$ contains the unary operation then, by the induction assumption, the identity $w_1=x^s\,\overline x\,^t$ holds in $\mathcal{EPI}$ for some $s\ge0$ and some $t>0$. If $s>t$ then
\begin{align*}
w&\equiv\,\overline{w_1}=\,\overline{x^s\,\overline x\,^t}=\,\overline{x^{s-t}x^t\,\overline x\,^t}\,\stackrel{\eqref{xx*=x*x=x^0}}=\,\overline{x^{s-t}}(x\,\overline x\,)^t\,\stackrel{\eqref{x*x=(x*x)^2=(xx*)}}=\,\overline{x^{s-t}(x\,\overline x\,)^{s-t}}\\
&\stackrel{\eqref{xx*=x*x=x^0}}=\,\overline{x^{s-t}(x^\omega)^{s-t}}\,\stackrel{\eqref{x^0x=xx^0=x**}}=\,\overline{(xx^\omega)^{s-t}}\,\stackrel{\eqref{x^0x=xx^0=x**}}=\,\overline{\bigl(\,\overline{\overline x}\,\bigr)^{s-t}}\,\stackrel{\eqref{(x^n)*=(x*)^n}}=\bigl(\,\overline{\overline{\overline x}}\,\bigr)^{s-t}\,\stackrel{\eqref{x***=x*}}=\,\overline x\,^{s-t}\ldotp
\end{align*}
If $s=t$ then
$$w\equiv\,\overline{w_1}=\,\overline{x^s\,\overline x\,^s}\,\stackrel{\eqref{xx*=x*x=x^0}}=\,\overline{(x\,\overline x\,)^s}\,\stackrel{\eqref{x*x=(x*x)^2=(xx*)}}=\,\overline{x\,\overline x}\,\stackrel{\eqref{x*x=(x*x)^2=(xx*)}}=x\,\overline x\,\ldotp$$
Finally, if $s<t$ then
\begin{align*}
w&\equiv\,\overline{w_1}=\,\overline{x^s\,\overline x\,^t}=\,\overline{x^s\,\overline x\,^s\,\overline x\,^{t-s}}\,\stackrel{\eqref{xx*=x*x=x^0}}=\,\overline{(x\,\overline x\,)^s\,\overline x\,^{t-s}}\,\stackrel{\eqref{x*x=(x*x)^2=(xx*)}}=\,\overline{(x\,\overline x\,)^{t-s}\,\overline x\,^{t-s}}\\
&\,\stackrel{\eqref{x*x=(x*x)^2=(xx*)}}=\,\overline{(x\,\overline x\,^2)^{t-s}}\stackrel{\eqref{(x^n)*=(x*)^n}}=\,\overline{\bigl(x\,\overline{x^2}\,\bigr)^{t-s}}\,\stackrel{\eqref{x*=(x^2)*x=x(x^2)*}}=\,\overline{\overline x\,^{t-s}}\stackrel{\eqref{(x^n)*=(x*)^n}}=\bigl(\,\overline{\overline x}\,\bigr)^{t-s}\stackrel{\eqref{x^0x=xx^0=x**}}=(xx^\omega)^{t-s}
\end{align*}
\begin{align*}
&\stackrel{\eqref{xx*=x*x=x^0}}=(x^2\,\overline x\,)^{t-s}\stackrel{\eqref{xx*=x*x=x^0}}=x^{2(t-s)}\,\overline x\,^{t-s}\ldotp
\end{align*}
We have thus proved that the variety $\mathcal{EPI}$ satisfies an identity of the form~\eqref{w=x^p(x^q)*} in any case.
\end{proof}

As usual, we say that an epigroup $S$ has \emph{index} $n$ if $x^n\in\Gr S$ for every $x\in S$ and $n$ is the least number with this property. Following~\cite{Shevrin-94,Shevrin-05}, we denote the class of all epigroups of index $\le n$ by $\mathcal E_n$. It is well known that $\mathcal E_n$ is an epigroup variety (see, e.g.,~\cite[Proposition~6]{Shevrin-94} or~\cite[Proposition~2.10]{Shevrin-05}). An identity $u=v$ is said to be \emph{mixed} if exactly one of $u$ and $v$ is a semigroup word.

\begin{corollary}
\label{finite index}
If a class of unary semigroups $K$ is contained in $\mathcal{EPI}$ and satisfies a mixed identity then $K$ consists of epigroups and $K\subseteq\mathcal E_n$ for some $n$.
\end{corollary}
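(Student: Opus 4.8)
The plan is to reduce the given mixed identity to a one-variable mixed identity, rewrite its non-semigroup side in the canonical form supplied by Lemma~\ref{identity in EPI}, and then show by a short computation that a single fixed power of every element of every member of $K$ is a group element.

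First I would normalize the mixed identity. Let $u=v$ be a mixed identity satisfied by $K$; without loss of generality $u$ is a semigroup word and $v$ a non-semigroup word. Substituting one and the same letter $x$ for every variable occurring in $u=v$ is a legitimate consequence of the identity and does not disturb the unary operation, so $K$ satisfies $x^d=v(x)$, where $d=\ell(u)\ge 1$ and $v(x)$ is a non-semigroup word depending on $x$ alone. Since $K\subseteq\mathcal{EPI}$, Lemma~\ref{identity in EPI} allows me to replace $v(x)$ by $x^p\,\overline x\,^q$ with $p\ge 0$ and $q\ge 1$; hence $K$ satisfies the identity $x^d=x^p\,\overline x\,^q$.

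Next I would extract a finite-index identity. Multiplying $x^d=x^p\,\overline x\,^q$ on the right by $x^q$ and invoking Corollary~\ref{one more identity} in the form $\overline x\,^qx^q=x^\omega$ yields $x^{d+q}=x^px^\omega$. Setting $M=d+q$ and multiplying once more by $x^\omega$, using that $x^\omega$ is idempotent by~\eqref{x*x=(x*x)^2=(xx*)} and commutes with powers of $x$ by~\eqref{x^0x=xx^0=x**}, I obtain the identities $x^Mx^\omega=x^\omega x^M=x^M$. Together with $x^M\,\overline x\,^M=\,\overline x\,^Mx^M=x^\omega$, again from Corollary~\ref{one more identity}, these hold in every member of $K$.

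Finally I would read these identities structurally. Fix $S\in K$ and $a\in S$, and put $e=a^\omega$ and $f=\,\overline a\,^M$. The identities above say that $e$ is idempotent, that $ea^M=a^Me=a^M$, and that $a^Mf=fa^M=e$. By the standard criterion that an element possessing a two-sided idempotent identity and a two-sided inverse relative to it lies in a subgroup, we get $a^M\in\Gr\,S$. As $a$ is arbitrary, every element of $S$ has its $M$-th power in $\Gr\,S$, so $S$ is an epigroup of index $\le M$, i.e.\ $S\in\mathcal E_M$; thus $K$ consists of epigroups and $K\subseteq\mathcal E_M$. The main obstacle is exactly this last step: because $\mathcal{EPI}$ contains unary semigroups that are not epigroups, I may not assume at the outset that $S$ is an epigroup and then quote properties of pseudoinversion; I must instead produce, from the identities alone, the explicit witnesses $x^\omega$ and $\overline x\,^M$ showing that $a^M$ is a group element. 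Once the identities $x^Mx^\omega=x^\omega x^M=x^M$ and $x^M\,\overline x\,^M=\,\overline x\,^Mx^M=x^\omega$ are in hand these witnesses are forced, and the remainder is routine.
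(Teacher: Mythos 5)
Your reduction is the same as the paper's: substitute a single letter into the mixed identity to obtain $x^d=v(x)$ with $v(x)$ a non-semigroup word in $x$, then invoke Lemma~\ref{identity in EPI} to rewrite the right-hand side as $x^p\,\overline x\,^q$. From that point the paper derives the single identity $x^n=x^{n+1}\,\overline x$ (via~\eqref{x*=(x^2)*x=x(x^2)*} and~\eqref{xx*=x*x=x^0}) and then cites the known fact from~\cite[p.\,334]{Shevrin-05} that a unary semigroup in $\mathcal{EPI}$ satisfying this identity is an epigroup of index $\le n$; you instead replace that citation by a direct structural argument. Your computations are correct: every identity you use (idempotency of $x^\omega$ from~\eqref{x*x=(x*x)^2=(xx*)}, commutation from~\eqref{x^0x=xx^0=x**}, and~\eqref{x^n(x*)^n=(x*)^nx^n=x^0}) lies in $\Delta$ and therefore holds in every member of $\mathcal{EPI}$, epigroup or not, and the criterion you quote (an idempotent two-sided identity $e$ for $a^M$ together with a two-sided inverse relative to $e$ places $a^M$ in the group of units of $eSe$, hence in $\Gr S$) is sound. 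So you have correctly shown that the \emph{underlying semigroup} of each $S\in K$ is an epigroup of index $\le M$.

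There is, however, a gap at the final step. In this paper ``$S$ is an epigroup'' means that the distinguished unary operation of $S$ \emph{coincides with pseudoinversion}, and $\mathcal E_M$ is a class of unary semigroups. That is exactly the nontrivial content of the corollary: $\mathcal{EPI}$ contains finite semigroups (hence epigroups qua semigroups) whose unary operation is not pseudoinversion, as Example~\ref{V_Sigma ne var_E Sigma} shows, and the corollary is applied in precisely this strong form in the implication b)\,$\longrightarrow$\,a) of Lemma~\ref{V_Sigma=var_E Sigma}. Proving $a^M\in\Gr S$ does not by itself settle this. The gap is fillable from the identities of $\Delta$: one checks that $e=a\,\overline a$ satisfies $e\,\overline a=\,\overline a\,e=\,\overline a$ and $\overline a\,(ae)=(ae)\,\overline a\,=e$, so that $e$ is the identity of the maximal subgroup $G_a$ containing $a^M$ (whence $a^\omega=e$ in the structural sense) and $\overline a$ is the inverse of $aa^\omega=ae$ in $G_a$, which is exactly the definition of the pseudoinverse of $a$. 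You should add this verification; the paper sidesteps it because the result it quotes from~\cite{Shevrin-05} asserts both halves of the conclusion at once.
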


\begin{proof}
Suppose that $K$ satisfies a mixed identity $u=v$. Substitute some letter $x$ for all letters occurring in this identity. Then we obtain an identity of the form $x^n=w$ for some positive integer $n$ and some non-semigroup word $w$ depending on $x$ only. By Lemma~\ref{identity in EPI}, $\mathcal{EPI}$ satisfies an identity of the form~\eqref{w=x^p(x^q)*}. Therefore, $K$ satisfies
$$x^n=w\,\stackrel{\eqref{w=x^p(x^q)*}}=x^p\,\overline x\,^q\,\stackrel{\eqref{x*=(x^2)*x=x(x^2)*}}=(x^p\,\overline x\,^{q-1})\,\overline x\,^2x\,\stackrel{\eqref{xx*=x*x=x^0}}=(x^p\,\overline x\,^q)x\,\overline x\,\stackrel{\eqref{w=x^p(x^q)*}}=x^nx\,\overline x=x^{n+1}\,\overline x\,\ldotp$$
So, the identity $x^n=x^{n+1}\,\overline x$ holds in $K$. It is well known (see, e.g.,~\cite[p.\,334]{Shevrin-05}) that if a unary semigroup $S\in K$ satisfies this identity then $S$ is an epigroup of index $\le n$.
\end{proof}

If $w\in F$, then $t(w)$ stands for the last letter of $w$.

\begin{lemma}
\label{w=w*z in EPI}
For any word $u$, there is a word $u^\ast$ such that the variety $\mathcal{EPI}$ satisfies the identity $u=u^\ast z$ where $z\equiv t(u)$.
\end{lemma}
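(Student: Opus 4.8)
The plan is to argue by induction on the weight of $u$, using the natural recursive description of the last letter: $t(x)\equiv x$ for a letter $x$, $t(w_1w_2)\equiv t(w_2)$, and $t(\overline{w_1})\equiv t(w_1)$, so that $t(w)$ is simply the rightmost letter occurring when $w$ is read as a string. In the base case $u$ is a single letter, and there is nothing to prove: $u\equiv t(u)$, so one may take $u^\ast$ to be empty (reading $u^\ast z$ as $z$). More generally, whenever $u$ is a semigroup word it already ends in a letter and the claim is immediate with $u^\ast$ the corresponding prefix, so the whole content lies in the presence of the unary operation.

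First I would treat the case $u\equiv u_1u_2$, where both $u_1$ and $u_2$ have smaller weight than $u$. Here $t(u)\equiv t(u_2)\equiv z$, and the induction hypothesis provides a word $u_2^\ast$ such that $\mathcal{EPI}$ satisfies $u_2=u_2^\ast z$. Multiplying on the left by $u_1$ gives $u=u_1u_2=u_1u_2^\ast z$, so it suffices to set $u^\ast\equiv u_1u_2^\ast$. This step is purely formal and uses nothing beyond associativity.

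The main obstacle is the remaining case $u\equiv\overline{w}$, since structurally $\overline w$ does not end in a letter and its last letter is buried inside the pseudoinversion; the whole point is to push that letter out to the right. The key tool is identity~\eqref{x*=(x^2)*x=x(x^2)*}, which, applied to the value $w$, yields $\overline w=\overline{w^2}\,w$ in $\mathcal{EPI}$. Now $w$ has strictly smaller weight than $\overline w$, so the induction hypothesis gives a word $w^\ast$ with $w=w^\ast z$, where $z\equiv t(w)\equiv t(u)$. Substituting, $\mathcal{EPI}$ satisfies $u\equiv\overline w=\overline{w^2}\,w=\overline{w^2}\,w^\ast z$, and the word on the right genuinely ends in the letter $z$; hence $u^\ast\equiv\overline{w^2}\,w^\ast$ does the job. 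I expect no difficulty beyond correctly tracking that the last letter of $w$ coincides with $t(u)$ and that identity~\eqref{x*=(x^2)*x=x(x^2)*} is exactly what converts a trailing unary operation into a trailing occurrence of that same letter.
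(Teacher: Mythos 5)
Your proof is correct and takes essentially the same route as the paper's: both hinge on applying identity~\eqref{x*=(x^2)*x=x(x^2)*} to rewrite a trailing pseudoinversion $\overline{w}$ as $\overline{w^2}\,w$ and then descending into $w$, the only difference being that you package the paper's iterative peeling of rightmost unary factors as a structural induction on the weight of $u$.
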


\begin{proof}
Let $u$ be an arbitrary word and $z\equiv t(u)$. There are two possible cases: either $u\equiv u^\ast z$ or $u\equiv s_0\,\overline{w_1}$ for some (maybe empty) word $s_0$. In the second case we apply to $u$ the identity~\eqref{x*=(x^2)*x=x(x^2)*} and obtain the word $u_1\equiv s_0\,\overline{w_1^2}\,w_1$ such that the identity $u=u_1$ holds $\mathcal{EPI}$. Here we have two possible cases again: either $w_1 \equiv w^\ast z$ or $w_1\equiv s_1\,\overline{w_2}$. In the second case we again apply the identity~\eqref{x*=(x^2)*x=x(x^2)*} and obtain the word $u_2\equiv s_0\,\overline{w_1^2}\,s_1\,\overline{w_2^2}\,w_2$ such that the identity $u=u_2$ holds in $\mathcal{EPI}$. One can continue this process. It is clear that after finite number of steps we find a word with the required properties.
\end{proof}

In the remaining part of the paper, $u^\ast$ has the same meaning as in Lemma~\ref{w=w*z in EPI}.

As already mentioned, every completely regular semigroup is an epigroup. The operation of pseudoinversion on a completely regular semigroup coincides with the operation of taking the inverse of a given element $x$ in the maximal subgroup that contains $x$. The latter operation is the standard unary operation on the class of completely regular semigroups (see, e.g., the book~\cite{Petrich-Reilly-99} or~\cite[Section~6]{Shevrin-Vernikov-Volkov-09}). Thus, the varieties of completely regular semigroups considered as unary semigroups are epigroup varieties. The following statement is well known.

\begin{lemma}
\label{cr}
For an epigroup variety $\mathcal V$, the following are equivalent:
\begin{itemize}
\item[\textup{a)}]$\mathcal V$ is completely regular;
\item[\textup{b)}]$\mathcal V$ is a variety of degree $1$;
\item[\textup{c)}]$\mathcal V$ satisfies the identity~\eqref{x=x**}.\qed
\end{itemize}
\end{lemma}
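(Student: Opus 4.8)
The plan is to prove $(\mathrm a)\Leftrightarrow(\mathrm c)$ by a short manipulation of the identities already established, and then to link both with $(\mathrm b)$ through the presence or absence of nontrivial nilsemigroups (recall that, by definition, a variety has degree $1$ exactly when all its nilsemigroups are nilpotent of class $\le1$, i.e. trivial, so that degree $\ge2$ is equivalent to containing a nontrivial nilsemigroup). First I would dispatch $(\mathrm a)\Leftrightarrow(\mathrm c)$. By~\eqref{x^0x=xx^0=x**} every epigroup satisfies $\overline{\overline x}=xx^\omega$, and by the basic fact recalled in the Introduction $aa^\omega\in G_a\subseteq\Gr S$, so the value of $\overline{\overline x}$ in any epigroup always lies in a subgroup. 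Hence if $\mathcal V$ satisfies~\eqref{x=x**} then every $x=\overline{\overline x}$ is a group element and $\mathcal V$ is completely regular; conversely, if $\mathcal V$ is completely regular then each $x$ lies in $G_x$ with identity $x^\omega$, whence $\overline{\overline x}=xx^\omega=x$ and~\eqref{x=x**} holds.

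The implication $(\mathrm a)\Rightarrow(\mathrm b)$ is then immediate: if $\mathcal V$ is completely regular then every nilsemigroup $N\in\mathcal V$ is a union of groups, but the only idempotent of a nilsemigroup is its zero, so the only subgroup of $N$ is trivial and therefore $N$ itself is trivial. Thus all nilsemigroups in $\mathcal V$ are trivial and $\mathcal V$ has degree $1$.

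The real work, and the step I expect to be the main obstacle, is the converse $(\mathrm b)\Rightarrow(\mathrm a)$, which I would prove in contrapositive form: assuming $\mathcal V$ is not completely regular, I must manufacture a nontrivial nilsemigroup in $\mathcal V$. Choose $S\in\mathcal V$ and $a\in S$ with $a\notin\Gr S$, and let $n\ge2$ be the index of $a$. Consider the monogenic sub-epigroup $T=\langle a\rangle\le S$, which lies in $\mathcal V$. By Lemma~\ref{big powers} the group elements of $T$ are precisely the powers $a^k$ with $k\ge n$ together with all pseudoinverses occurring in $T$, and (since $T$ is commutative) they form a single subgroup $H=T\cap G_a$. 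The delicate points to verify are that $H$ is an ideal of $T$ and that the Rees congruence collapsing $H$ to a zero respects the unary operation. For the first, one uses that $a^\omega$ commutes with the powers of $a$ and that $a^k\overline x{}^k=a^\omega$ by~\eqref{x^n(x*)^n=(x*)^nx^n=x^0}, which gives $a^kH\subseteq H$ and $Ha^k\subseteq H$; for the second, one notes that the pseudoinverse of any element of $H$ again lies in $H$, so the congruence is compatible with $\overline{\phantom x}$.

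Granting this, the Rees quotient $T/H$ is a homomorphic image of $T$ inside $\mathcal V$; it equals $\{a,a^2,\dots,a^{n-1},0\}$ with $a^n=0$, a monogenic nilsemigroup which is nontrivial because $n\ge2$. Hence $\mathcal V$ contains a nontrivial nilsemigroup and so has degree $\ge2$, contradicting $(\mathrm b)$. Checking that $H$ is a unary ideal and identifying $T/H$ with the stated nilsemigroup is the only genuinely technical part; once $(\mathrm a)\Leftrightarrow(\mathrm c)$ and $(\mathrm a)\Leftrightarrow(\mathrm b)$ are in hand, the three conditions are equivalent.
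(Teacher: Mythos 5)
Your argument is correct; note that the paper itself gives no proof of Lemma~\ref{cr} (it is stated as well known, with the references to completely regular semigroups in Section~\ref{prel}), so there is no in-text proof to compare against. Your equivalence a)\,$\Leftrightarrow$\,c) via $\overline{\overline x}=xx^\omega$ from~\eqref{x^0x=xx^0=x**} is exactly the standard argument, and a)\,$\Rightarrow$\,b) is immediate since the only idempotent of a nilsemigroup is its zero. For b)\,$\Rightarrow$\,a) your Rees-quotient construction works, but two points deserve tightening. First, the claim that every element of the monogenic sub-epigroup $T=\langle a\rangle$ is either a power $a^j$ with $j<n$ or lies in $G_a$ is best justified by Lemma~\ref{identity in EPI}: every non-semigroup unary word in one letter equals $x^p\,\overline x\,^q$ with $q\ge1$ in $\mathcal{EPI}$, and $a^p\,\overline a\,^q=a^p(a^\omega)^p\,\overline a\,^q\cdot(\dots)$ lands in $G_a$ because $a^p a^\omega=(aa^\omega)^p\in G_a$; this replaces your somewhat loose appeal to Lemma~\ref{big powers} and commutativity. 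Second, you should note that $a,a^2,\dots,a^{n-1}$ are pairwise distinct (otherwise $a^i=a^{i+d}$ for some $d>0$ would force $a^i\in\Gr S$ with $i<n$, contradicting the minimality of the index), so that $T/H$ really is a nontrivial nilsemigroup; compatibility of the Rees congruence with pseudoinversion is as you say, since $\overline h\in T\cap G_a=H$ for $h\in H$ and the remaining classes are singletons. With these details supplied the proof is complete and self-contained, which is arguably more than the paper offers for this lemma.
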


The following claim is evident.

\begin{lemma}
\label{nil}
Every nil-epigroup satisfies the identity $\overline x\,=0$.\qed
\end{lemma}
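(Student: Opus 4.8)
The plan is to show directly that in any nil-epigroup $S$ the pseudoinverse of every element coincides with the zero of $S$; since this zero absorbs every element, that is exactly what the symbolic identity $\overline x=0$ asserts. The starting observation is that $S$, being a nilsemigroup, has a zero element $0$, and every element $x$ satisfies $x^n=0$ for some $n$ depending on $x$.

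First I would record that $x^\omega=0$ for every $x\in S$. Fixing $x$ and choosing $n$ with $x^n=0$, Corollary~\ref{one more identity} gives $x^\omega=x^n\,\overline x\,^n$, and the right-hand side equals $0$ because $x^n=0$ and $0$ absorbs products. Conceptually this merely says that the only idempotent of a nilsemigroup is its zero, and $x^\omega$ is by construction an idempotent (the identity of the maximal subgroup $G_x$), so that $G_x=\{0\}$.

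Next I would connect $\overline x$ to $x^\omega$ using the basic identities of Lemma~\ref{simplest identities}. One has the chain $\overline x\,x^\omega=\overline x\,(x\,\overline x\,)=(\,\overline x\,x)\,\overline x\,=x^\omega\,\overline x\,=(x\,\overline x\,)\,\overline x\,=x\,\overline x\,^2=x\,\overline{x^2}=\overline x$, where the steps use \eqref{xx*=x*x=x^0}, associativity, \eqref{(x^n)*=(x*)^n} and \eqref{x*=(x^2)*x=x(x^2)*}. Combining this with the previous paragraph yields $\overline x\,=\overline x\,x^\omega=\overline x\cdot0=0$, and since $0$ is the zero of $S$ this is precisely the identity $\overline x\,=0$.

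There is no genuine obstacle here — this is why the claim is labelled evident — and the only point requiring a little care is the bookkeeping that translates the symbolic notation $\overline x\,=0$ into the concrete equality of $\overline x$ with the zero element, together with the verification that $\overline x$ actually lands in the trivial subgroup $G_x$. Once $x^\omega=0$ has been established, everything else is a one-line consequence of the identities already collected in Lemma~\ref{simplest identities} and Corollary~\ref{one more identity}.
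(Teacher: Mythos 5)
Your argument is correct: the paper states this lemma without proof (it is declared evident and closed with \qed), and your write-up is a sound elaboration of why. Establishing $x^\omega=x^n\,\overline x\,^n=0$ via Corollary~\ref{one more identity} and then $\overline x\,=\,\overline x\,x^\omega=0$ via the identities of Lemma~\ref{simplest identities} is exactly the kind of one-line verification the authors had in mind, and your translation of the symbolic identity $\overline x\,=0$ into the absorbing-element condition is handled correctly.
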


Now we formulate results about semigroup varieties of finite degree obtained in~\cite{Sapir-Sukhanov-81,Vernikov-08}. An identity $u=v$ is called a \emph{semigroup identity} if both $u$ and $v$ are semigroup words. For a semigroup variety $\mathcal V$, the following are equivalent:
\begin{itemize}
\item[a)]$\mathcal V$ is a variety of finite degree;
\item[b)]$\mathcal{V\nsupseteq F}$;
\item[c)]$\mathcal V$ satisfies an identity of the form~\eqref{equ-gen} for some natural $n$ and some semigroup word $w$ with $\ell(w)>n$;
\item[d)]$\mathcal V$ satisfies an identity of the form
\begin{equation}
\label{equ-semi}
x_1\cdots x_n=x_1\cdots x_{i-1}\cdot(x_i\cdots x_j)^{m+1}\cdot x_{j+1}\cdots x_n
\end{equation}
for some $m$, $n$, $i$ and $j$ with $1\le i\le j\le n$.
\end{itemize}
The equivalence of the claims a)--c) was proved in~\cite[Theorem~2]{Sapir-Sukhanov-81}, while the equivalence of a) and d) immediately follows from~\cite[Proposition~2.11]{Vernikov-08}. For varieties of an arbitrary given degree $n$, the following modification of this assertion is valid.

\begin{proposition}
\label{deg n semi}
Let $n$ be a natural number. For a semigroup variety $\mathcal V$, the following are equivalent:
\begin{itemize}
\item[\textup{a)}]$\mathcal V$ is a variety of degree $\le n$;
\item[\textup{b)}]$\mathcal{V\nsupseteq F}_{n+1}$;
\item[\textup{c)}]$\mathcal V$ satisfies an identity of the form~\eqref{equ-gen} for some semigroup word $w$ with $\ell(w)>n$;
\item[\textup{d)}]$\mathcal V$ satisfies an identity of the form~\eqref{equ-semi} for some $m$, $i$ and $j$ with $1\le i\le j\le n$.\qed
\end{itemize}
\end{proposition}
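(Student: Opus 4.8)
The plan is to prove that conditions 1), 3) and 4) are equivalent by establishing 1)$\Leftrightarrow$4) and 3)$\Leftrightarrow$4), and then to fold in 2) separately; throughout one must watch the parameter $n$, since this is the graded refinement of the ungraded statement recorded just above, and I would follow the route of~\cite{Sapir-Sukhanov-81,Vernikov-08} while tracking lengths and indices. Two implications are routine. For 4)$\Rightarrow$3) one computes the length of the right-hand side of~\eqref{equ-semi}, namely $\ell(w)=n+m(j-i+1)$, which exceeds $n$ because $m\ge1$ and $j\ge i$; thus~\eqref{equ-semi} is already an instance of~\eqref{equ-gen} with $\ell(w)>n$. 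For 4)$\Rightarrow$1) I would take a nilsemigroup $S\in\mathcal V$ and arbitrary $a_1,\dots,a_n\in S$, set $c=a_i\cdots a_j$, and feed a suitable power of $c$ into the last letter of the repeated block of~\eqref{equ-semi}; this yields $a_1\cdots a_{i-1}\,c\,a_{j+1}\cdots a_n=a_1\cdots a_{i-1}\,c^{\,N}\,a_{j+1}\cdots a_n$ for arbitrarily large $N$. As $c$ is nilpotent in $S$, the right-hand side is eventually $0$, so $a_1\cdots a_n=0$, and hence every nilsemigroup of $\mathcal V$ is nilpotent of degree $\le n$.

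The combinatorial heart of the argument consists of the two implications 3)$\Rightarrow$4) and 1)$\Rightarrow$4), and I expect these to be the main obstacle. In 3)$\Rightarrow$4) one starts from a law $x_1\cdots x_n=w$ with $\ell(w)>n$ and must normalise $w$ into the block-power shape of~\eqref{equ-semi} while guaranteeing that the repeated factor $x_i\cdots x_j$ satisfies $j\le n$. I would reuse the substitution technique of~\cite[Theorem~2]{Sapir-Sukhanov-81}: substituting letters of $w$ back into the identity pumps it to words of unbounded length over $\{x_1,\dots,x_n\}$, after which a pigeonhole and Rees-quotient analysis isolates a consecutive segment that may be freely repeated. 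The only new point relative to the ungraded case is bookkeeping: the length-$n$ left-hand side constrains the distinguished segment to lie among the first $n$ letters, forcing $j\le n$. The implication 1)$\Rightarrow$4) is precisely the degree-$n$ specification obtained in~\cite[Proposition~2.11]{Vernikov-08}, which I would invoke (or re-derive by the same method) to close the equivalence of 1), 3) and 4).

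It remains to incorporate 2). One direction is immediate: the free object of $\mathcal F_{n+1}$ is the commutative square-free semigroup truncated at length $n$ (a product vanishes once a letter is repeated or once more than $n$ distinct letters occur), so it has a nonzero product of $n$ letters while every product of $n+1$ letters is $0$; hence $\mathcal F_{n+1}$ has degree exactly $n+1$, and $\mathcal F_{n+1}\subseteq\mathcal V$ supplies a nilsemigroup of degree $n+1>n$ in $\mathcal V$. This gives 1)$\Rightarrow$2). For the converse 2)$\Rightarrow$1), that is, that a variety of degree $>n$ must contain $\mathcal F_{n+1}$, I would adapt the reasoning behind the ungraded equivalence $\mathcal V\nsupseteq\mathcal F\Leftrightarrow$ finite degree from~\cite[Theorem~2]{Sapir-Sukhanov-81}: from a nilsemigroup $S\in\mathcal V$ carrying a nonzero product $a_1\cdots a_n$ one recovers, inside the relatively free semigroup of $\mathcal V$, a generating object of $\mathcal F_{n+1}$. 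A useful consistency check, and the step that ties 2) to the cycle, is that~\eqref{equ-semi} with $j\le n$ fails in $\mathcal F_{n+1}$, since its right-hand side contains the square $x_i^2$ and is therefore $0$ there whereas its left-hand side is a nonzero monomial; this already shows 4)$\Rightarrow$2). Assembling the cycle 1)$\Leftrightarrow$3)$\Leftrightarrow$4) with 1)$\Leftrightarrow$2) completes the proof.
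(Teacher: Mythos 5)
The paper offers no proof of this proposition at all: it is stated with a \qed{} and justified purely by citation, the equivalence of a)--c) being attributed to the proof of \cite[Theorem~2]{Sapir-Sukhanov-81} and a)$\Leftrightarrow$d) to \cite[Proposition~2.11]{Vernikov-08}. Your proposal rests on exactly the same two sources for the genuinely hard implications (the normalisation of $w$ into the shape~\eqref{equ-semi}, i.e.\ essentially a)$\Rightarrow$d)), so in substance you follow the paper's route; what you add is a correct verification of the routine directions that the paper leaves implicit: the length count $\ell(w)=n+m(j-i+1)>n$ for d)$\Rightarrow$c), the pumping argument with $c=a_i\cdots a_j$ for d)$\Rightarrow$a), and the computation of the nilpotency degree of the free object of $\mathcal F_{n+1}$ for a)$\Rightarrow$b). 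Two small imprecisions are worth fixing. First, the right-hand side of~\eqref{equ-semi} does not literally contain the factor $x_i^2$ when $i<j$; it vanishes in $\mathcal F_{n+1}$ because that variety is commutative and the letter $x_i$ occurs $m+1\ge2$ times, so one must invoke $xy=yx$ before $x^2=0$. Second, in your sketch of b)$\Rightarrow$a) the witness for degree $>n$ is a nilsemigroup with a nonzero product of $n+1$ (not $n$) factors, and the proposed direct construction of a generator of $\mathcal F_{n+1}$ inside the relatively free semigroup is the least developed step; the standard (and the cited) route goes instead through an identity of $\mathcal V$ that fails in $\mathcal F_{n+1}$, whose failure forces it to have the form~\eqref{equ-gen} with a short linear left-hand side, whence c) and then a). Neither point is a real gap relative to the paper, which delegates all of this to \cite{Sapir-Sukhanov-81,Vernikov-08} anyway.
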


Here the equivalence of the claims a)--c) easily follows from the proof of~\cite[Theorem~2]{Sapir-Sukhanov-81}, while the equivalence of a) and d) is verified in~\cite[Proposition~2.11]{Vernikov-08}.

A semigroup word $w$ is called \emph{linear} if any letter occurs in $w$ at most once. Recall that an identity of the form
$$x_1x_2\cdots x_n=x_{1\pi}x_{2\pi}\cdots x_{n\pi}$$
where $\pi$ is a non-trivial permutation on the set $\{1,2,\dots,n\}$ is called \emph{permutational}. If $w\in F$, then $c(w)$ denotes the set of all letters occurring in the word $w$.

\begin{lemma}
\label{fin deg or permut}
If an epigroup variety $\mathcal V$ satisfies a non-trivial identity of the form~\eqref{equ-gen} then either this identity is permutational or $\mathcal V$ is a variety of degree $\le n$.{\sloppy

}
\end{lemma}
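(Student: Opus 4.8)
The plan is to analyze the word $w$ in the given identity \eqref{equ-gen} and to show that, unless $w$ is a permutation of $x_1\cdots x_n$, the variety collapses on its nilsemigroups. Since the degree of an epigroup variety is measured entirely on its nilsemigroups, it suffices to prove that every nilsemigroup $S\in\mathcal V$ satisfies $x_1\cdots x_n=0$ in all cases other than the permutational one; by the very definition of degree $\le n$ this is precisely what is wanted. First I would dispose of the case when $w$ is a non-semigroup word. A nilsemigroup is a nil-epigroup, so by Lemma~\ref{nil} it satisfies $\overline x=0$. Consequently any occurrence of the unary operation inside $w$ evaluates to the (absorbing) zero element, and this zero propagates through the remaining multiplications and unary operations; hence $w=0$ holds in $S$. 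Together with \eqref{equ-gen} this gives $x_1\cdots x_n=0$ in $S$, whence $\mathcal V$ has degree $\le n$.

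It remains to treat the case when $w$ is a semigroup word, where I would compare the content $c(w)$ with the set $\{x_1,\dots,x_n\}$ of letters on the left-hand side. If $\{x_1,\dots,x_n\}\subseteq c(w)$ and $\ell(w)=n$, then $w$ uses each of the $n$ distinct letters exactly once, so $w\equiv x_{1\pi}\cdots x_{n\pi}$ for some permutation $\pi$; as the identity is non-trivial, $\pi$ is non-trivial and the identity is permutational. If instead $\{x_1,\dots,x_n\}\subseteq c(w)$ and $\ell(w)>n$, then \eqref{equ-gen} is a semigroup identity of exactly the shape required in item (c) of Proposition~\ref{deg n semi}; applying that proposition to the semigroup variety defined by this single identity shows it has degree $\le n$, and since every nilsemigroup of $\mathcal V$ lies in that variety, $\mathcal V$ has degree $\le n$ as well.

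The only remaining possibility is that some letter $x_k$ with $1\le k\le n$ does not occur in $w$; this is the step I expect to require the most care. Here I would argue directly inside a nilsemigroup $S\in\mathcal V$: fix arbitrary $a_1,\dots,a_n\in S$ and substitute $x_i\mapsto a_i$ for $i\ne k$ and $x_k\mapsto a_k^{\,t}$. Since $x_k\notin c(w)$, the right-hand side of \eqref{equ-gen} takes a value independent of $t$, so $a_1\cdots a_{k-1}a_k^{\,t}a_{k+1}\cdots a_n$ is the same for every $t\ge1$; choosing $t$ so large that $a_k^{\,t}=0$ forces this common value to be $0$, and taking $t=1$ yields $a_1\cdots a_n=0$. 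As the $a_i$ are arbitrary, $S$ satisfies $x_1\cdots x_n=0$, so once more $\mathcal V$ has degree $\le n$. The main obstacle throughout is not any single computation but the bookkeeping ensuring that these cases exhaust all possibilities and that, outside the permutational one, each forces nilpotency of degree $\le n$; the genuinely new input beyond the known semigroup results is the collapse $w=0$ produced by Lemma~\ref{nil} in the non-semigroup case.
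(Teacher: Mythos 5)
Your proof is correct and follows essentially the same route as the paper's: Lemma~\ref{nil} disposes of the case of a non-semigroup word $w$, Proposition~\ref{deg n semi} handles semigroup words of length $>n$, and the linear case yields a permutational identity. The one place you genuinely diverge is the subcase where some letter $x_k$ is absent from $c(w)$: the paper substitutes $x_k^2$ for $x_k$ to manufacture a new identity $x_1\cdots x_n=w'$ with $\ell(w')=n+1>n$ and then reuses the long-word case, whereas you argue directly inside an arbitrary nilsemigroup by substituting $a_k^{\,t}$ for $x_k$ and choosing $t$ large enough that $a_k^{\,t}=0$. Both arguments are valid; yours is a bit more self-contained at that point, while the paper's reduction stays entirely at the level of identities.
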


\begin{proof}
If the word $w$ involves the operation of pseudoinversion then every nilsemigroup in $\mathcal V$ satisfies the identity $x_1\cdots x_n=0$ by Lemma~\ref{nil}. Therefore, $\mathcal V$ is a variety of degree $\le n$ in this case. Thus, we may assume that $w$ is a semigroup word. Suppose that $\ell(w)>n$. Then substituting $x$ to all letters occurring in~\eqref{equ-gen}, we obtain an identity of the form~\eqref{x^p=x^{p+q}}. Therefore, $\mathcal V$ is periodic. Then it may be considered as a variety of semigroups. According to Proposition~\ref{deg n semi}, this means that $\mathcal V$ is a variety of degree $\le n$. Suppose now that $\ell(w)\le n$. If $c(w)\ne\{x_1,\dots,x_n\}$ then $x_i\notin c(w)$ for some $1\le i\le n$. One can substitute $x_i^2$ for $x_i$ in~\eqref{equ-gen}. Then we obtain the identity $x_1\cdots x_{i-1}x_i^2x_{i+1}\cdots x_n=w$. Put $w'\equiv x_1\cdots x_{i-1}x_i^2x_{i+1}\cdots x_n$. Then $x_1\cdots x_n=w=w'$ holds in $\mathcal V$. Thus, $\mathcal V$ satisfies the identity $x_1\cdots x_n=w'$ and $\ell(w')>n$. As we have seen above, this implies that $\mathcal V$ has degree $\le n$. Finally, if $c(w)=\{x_1,\dots,x_n\}$ then the fact that $\ell(w)\le n$ implies that $\ell(w)=n$, whence the word $w$ is linear. Therefore, the identity~\eqref{equ-gen} is permutational in this case.
\end{proof}

Put $\mathcal P=\mathbf V[xy=x^2y,\,x^2y^2=y^2x^2]$. We need the following

\begin{lemma}
\label{x_1...x_n=w in P}
If the variety $\mathcal P$ satisfies a non-trivial identity of the form~\eqref{equ-gen} then $n>1$ and $w\equiv w'x_n$ for some word $w'$ with $c(w')=\{x_1,\dots,x_{n-1}\}$.
\end{lemma}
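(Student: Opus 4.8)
The plan is to understand the structure of the variety $\mathcal P=\mathbf V[xy=x^2y,\,x^2y^2=y^2x^2]$ well enough to rule out every way in which a non-trivial identity $x_1\cdots x_n=w$ could fail the claimed conclusion. First I would record the basic consequences of the defining identities. The identity $xy=x^2y$ is a left-absorption rule: iterating it, $\mathcal P$ satisfies $xy=x^{k}y$ for every $k\ge1$, so any prefix letter of a word may be raised to an arbitrary power without changing the value, as long as something stands to its right. The identity $x^2y^2=y^2x^2$ lets squared (hence, via the first identity, arbitrarily high) powers commute past one another. I would first dispose of the degenerate case $n=1$: the identity would read $x_1=w$, and I would argue that in $\mathcal P$ this forces $w$ to equal $x_1$ (so the identity is trivial), contradicting non-triviality; hence $n>1$. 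The cleanest route is to exhibit a member of $\mathcal P$, or use a normal-form/deletion argument, showing that for $n=1$ the only word equal to the single letter $x_1$ is $x_1$ itself.

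Having secured $n>1$, the heart of the matter is the \emph{last letter}. I would show that $\mathcal P$ satisfies a cancellation-like property at the right end: namely that from an identity $u=v$ in $\mathcal P$ one can read off that $t(u)$ and $t(v)$ coincide and in fact that the right end is rigid. The tool here is Lemma~\ref{w=w*z in EPI}, which (after noting $\mathcal P$ is periodic, so it is an epigroup variety) lets me write $x_1\cdots x_n=(x_1\cdots x_n)^\ast\cdot x_n$ and $w=w^\ast\cdot t(w)$, reducing the question of the last letter to a question about the final \emph{semigroup} letter. To pin down that $t(w)\equiv x_n$ and that $x_n$ occurs only in that final position, I would construct a concrete separating semigroup in $\mathcal P$. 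A natural candidate is built on the free commutative monoid on the exponents, factored so that $xy=x^2y$ holds; more concretely I would test the putative identity in the two-element or small semigroups lying in $\mathcal P$ that detect the rightmost letter, substituting a chosen generator for $x_n$ and the identity (or an idempotent) for the remaining letters.

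The key structural fact I am aiming to prove is that in $\mathcal P$ the map sending a word to its value is insensitive to the multiplicities and order of all letters \emph{except} that the rightmost letter is distinguished and its identity is preserved. Concretely, I expect to establish a normal form: every word $u$ with $t(u)\equiv z$ equals, in $\mathcal P$, a word of the shape $(\text{product of high powers of }c(u)\setminus\{z\}\text{, in a canonical commuting order})\cdot z$, possibly with a residual power of $z$ folded in. Once such a normal form is available, the equality $x_1\cdots x_n=w$ in $\mathcal P$ forces the right-hand normal form to end in $x_n$ and to involve exactly the letters $x_1,\dots,x_n$; since $x_n$ cannot appear both as the distinguished last letter and elsewhere without collapsing the left-linear side, I conclude $w\equiv w'x_n$ with $c(w')=\{x_1,\dots,x_{n-1}\}$.

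The main obstacle will be establishing the right-end rigidity rigorously, i.e.\ ruling out that $w$ could end in some letter other than $x_n$, or end in $x_n$ while failing to contain all of $x_1,\dots,x_{n-1}$ in $w'$. The danger is that the absorption identity $xy=x^2y$ is very destructive on the \emph{left}, so one might worry it also scrambles the right end or deletes letters; the whole point is to show it does not touch the final multiplicative letter. I expect the decisive input to be a carefully chosen test semigroup in $\mathcal P$ (or, equivalently, the deletable/non-deletable letter analysis standard in this circle of ideas), which witnesses that the rightmost letter and the letter-content are invariants of the value. Everything else — raising prefix letters to higher powers, commuting squares — is routine manipulation from the two defining identities and the epigroup identities of Lemma~\ref{simplest identities}.
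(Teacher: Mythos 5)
Your overall instinct --- evaluate the identity in a small concrete member of $\mathcal P$, sending one distinguished letter to a generator and the rest to an idempotent --- is exactly the idea the paper uses. But your write-up never actually produces the witness, and everything hinges on it. The paper's proof is nothing but the observation that $\mathcal P=\var P$ where $P=\langle a,e\mid e^2=e,\ ea=a,\ ae=0\rangle=\{e,a,0\}$, with $\overline e=e$ and $\overline a=0$: substituting $0$ for a letter missing from one side and $e$ elsewhere gives $e=0$, which forces $c(w)=\{x_1,\dots,x_n\}$; substituting $a$ for $x_n$ and $e$ for the rest makes the left side equal $a\ne0$, while the right side is $0$ unless $w\equiv w'x_n$ with $x_n\notin c(w')$ and with no pseudoinversion applied over $x_n$ (since $\overline a=0$ kills everything); non-triviality then makes $w'$ non-empty, whence $n>1$. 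Note that a two-element semigroup cannot do this job: the only two-element members of $\mathcal P$ (the semilattice and the null semigroup) do not see the rightmost letter, so your phrase ``two-element or small semigroups'' conceals the one non-routine step. Until you exhibit $P$ (or an equivalent divisor) and check it lies in $\mathcal P$, the ``right-end rigidity'' you need is an unproved assertion.

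Two further gaps. First, the normal form you propose to establish (value determined by the letter content together with the distinguished last letter) is at least as strong as the lemma itself, and proving it \emph{faithful} --- i.e., that distinct normal forms take distinct values somewhere in $\mathcal P$ --- again requires the very separating semigroup you have not constructed; so this detour adds work without closing the gap. Second, you do not address the cases where $w$ is a non-semigroup word: Lemma~\ref{w=w*z in EPI} only rewrites $w$ as $w^\ast\, t(w)$ inside $\mathcal{EPI}$, it does not rule out that the pseudoinversion sits over an occurrence of $x_n$, and Corollary~\ref{t(p)=t(q)} is not applicable here since the identity $x_1\cdots x_n=w$ is not assumed to be deducible from a last-letter-preserving system. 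In the paper these cases are dispatched in one stroke by $\overline a=0$ in $P$; in your scheme they are simply not handled.
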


\begin{proof}
It is well known and easy to check that the variety $\mathcal P$ is generated by the semigroup
$$P=\langle a,e\mid e^2=e,\,ea=a,\,ae=0\rangle=\{e,a,0\}\ldotp$$
This semigroup is finite, whence it is an epigroup. Note that $\overline e=e$ and $\overline a=0$. Suppose that $c(w)\ne \{x_1,\dots,x_n\}$. Then there is a letter $x$ that occurs on one side of the identity~\eqref{equ-gen} but does not occur on the other side. Substituting 0 for $x$ and $e$ for all other letters occurring in the identity, we obtain the wrong equality $e=0$. Therefore, $c(w)=\{x_1,\dots,x_n\}$. Substitute now $a$ for $x_n$ and $e$ for all other letters occurring in the identity~\eqref{equ-gen}. The left hand side of the equality we obtain equals $a$. We denote the right hand side of this equality by $b$. Thus, $P$ satisfies the equality $a=b$. If the unary operation applies to the letter $x_n$ in the word $w$ or $t(w)\not\equiv x_n$  then $b=0$. But $a\ne0$ in $P$. Therefore, $w\equiv w'x_n$. If $x_n\in c(w')$ then $b=0$ again, thus $c(w')=\{x_1,\dots,x_{n-1}\}$. Finally, the word $w'$ is non-empty because the identity~\eqref{equ-gen} is non-trivial. Therefore, $n>1$.
\end{proof}

Put $\mathcal C=\mathbf V[x^2=x^3,\,xy=yx]$. The unary semigroup variety generated by an epigroup $S$ is denoted by $\var S$. Clearly, if the semigroup $S$ is finite then $\var S$ is a variety of epigroups. The following statement was formulated without proof in~\cite[Theorem~3.2]{Volkov-00}. We provide the proof here for the sake of completeness.

\begin{proposition}
\label{GrS is right ideal}
Let $\mathcal V$ be an epigroup variety. For an arbitrary epigroup $S\in\mathcal V$, the set $\Gr S$ is a right ideal in $S$ if and only if the variety $\mathcal V$ contains neither $\mathcal C$ nor $\mathcal P$.
\end{proposition}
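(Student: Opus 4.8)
The plan is to prove both implications by contraposition, after translating the right-ideal condition into a statement about idempotents. The basic observation is that, by~\eqref{x^0x=xx^0=x**}, an element $t$ of an epigroup $S$ satisfies $\overline{\overline t}=t$ if and only if $t\in\Gr S$, while $\overline{\overline a}\in\Gr S$ for every $a\in S$. Consequently, if $\Gr S$ is \emph{not} a right ideal, there are $g\in\Gr S$ and $s\in S$ with $gs\notin\Gr S$; putting $e=g^\omega$ and $c=gs$ and using $g=\overline{\overline g}$ together with~\eqref{x^0x=xx^0=x**}, one gets $ec=g^\omega gs=\overline{\overline g}\,s=gs=c$, while $c\notin\Gr S$. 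So every failure of the right-ideal property produces an idempotent $e$ and an element $c\notin\Gr S$ with $ec=c$; this reduction underlies the whole argument.

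First I would dispose of the implication from the right-ideal property to $\mathcal V\nsupseteq\mathcal C$ and $\mathcal V\nsupseteq\mathcal P$, arguing contrapositively by exhibiting in each of $\mathcal C$ and $\mathcal P$ an epigroup whose group elements do not form a right ideal. For $\mathcal P$ this is the generating semigroup $P=\{e,a,0\}$ used in Lemma~\ref{x_1...x_n=w in P}: here $\Gr P=\{e,0\}$ yet $ea=a\notin\Gr P$. For $\mathcal C$ I take the three-element commutative monoid $C=\{1,a,0\}$ with $a^2=0$; it satisfies $x^2=x^3$ and $xy=yx$, so $C\in\mathcal C$, and $\Gr C=\{1,0\}$ with $1\cdot a=a\notin\Gr C$. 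Since $\mathcal C$ is periodic, I would also note (for later use) that $C$ generates $\mathcal C$: an identity fails in $\mathcal C$ exactly when some letter occurs with different ``degree capped at $2$'' on the two sides, and the substitution sending that letter to $a$ and all others to $1$ then refutes it in $C$, so $\mathcal C=\var C$. Thus whenever $\mathcal C\subseteq\mathcal V$ or $\mathcal P\subseteq\mathcal V$, the variety $\mathcal V$ contains an epigroup whose group elements do not form a right ideal.

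For the converse I again work contrapositively, starting from the pair $(e,c)$ above and showing that $\mathcal V$ contains $\mathcal C$ or $\mathcal P$. I would split on $ce$. If $ce\notin\Gr S$, put $d=ce$; then $ed=de=d$ and $d\notin\Gr S$, and the sub-unary-semigroup $\{e\}\cup\langle d\rangle$ of $S$ maps onto $C$ by sending $e\mapsto1$, $d\mapsto a$, and every remaining element (the higher powers of $d$, the whole maximal subgroup containing the group part of $d$, and $\overline d$) to $0$. Using $d\notin\Gr S$ one checks $d\ne d^i$ for $i\ge2$, so the tail is separated from the group part and this is a well-defined surjective unary homomorphism onto $C$; hence $C\in\mathcal V$ and $\mathcal C=\var C\subseteq\mathcal V$. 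If instead $ce\in\Gr S$, set $g=ce$ and consider $a=c\,\overline g$; from $ec=c$, $ge=g$ and~\eqref{xx*=x*x=x^0} one computes $ea=ae=a$. Should $a\notin\Gr S$, the previous construction applies to the pair $(e,a)$ and again yields $\mathcal C\subseteq\mathcal V$; otherwise $c\,\overline g\in\Gr S$, and this is the case that must produce $\mathcal P$.

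The main obstacle is precisely this last case: from an idempotent $e$ and an element $c\notin\Gr S$ with $ec=c$, $ce=g\in\Gr S$ and $c\,\overline g\in\Gr S$, one must exhibit a divisor of $S$ isomorphic to $P$, i.e.\ an idempotent $e'$ and an element $a'\notin\Gr S$ with $e'a'=a'$, $a'e'=0=(a')^2$ and $\overline{a'}=0$. The mechanism I would use is to pass to the Rees quotient of a suitable finitely generated subsemigroup of $S$ by the ideal generated by the group part $\overline{\overline c}$ of $c$: in the quotient the image of $c$ becomes nilpotent with pseudoinverse $0$ while $ec=c$ persists, and the hypotheses $ce\in\Gr S$ and $c\,\overline g\in\Gr S$ are what force the images of $ce$ and of $c^2$ to vanish, leaving exactly the multiplication table of $P$. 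The delicate points—verifying that $c$ itself survives the quotient so that the image is nontrivial, and that the intended elements really collapse—are where all three hypotheses on $(e,c)$ are genuinely used, and this bookkeeping with pseudoinverses is the technical heart of the proof. Once $C$ or $P$ is found as a divisor of $S\in\mathcal V$, closure of varieties under subalgebras and homomorphic images gives $\mathcal C\subseteq\mathcal V$ or $\mathcal P\subseteq\mathcal V$, completing the contrapositive.
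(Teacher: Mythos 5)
Your necessity half and your two $C$-producing cases are sound and essentially parallel the paper: the paper likewise refutes the right-ideal property directly inside the generators $P$ and $C$ of $\mathcal P$ and $\mathcal C$, and likewise manufactures $C$ as a Rees quotient once it has an idempotent $e$ and an element $a\notin\Gr S$ with $ea=ae=a$. The problem is that the one case you yourself single out as ``the technical heart of the proof''\,---\,an idempotent $e$ and $c\notin\Gr S$ with $ec=c$, $g:=ce\in\Gr S$ and $c\,\overline g\in\Gr S$, from which $P$ must be extracted\,---\,is exactly the case you do not prove. You assert that the hypotheses ``force the images of $ce$ and of $c^2$ to vanish'' in the Rees quotient by the ideal $I$ generated by $\overline{\overline c}=c\,c^\omega$, but you give no computation, and on its face there is no reason they should: every element of $I$ has $c\,c^\omega$ as a factor, and neither $c^2$ nor $ce$ visibly has one. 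The ingredient you are missing is the identity $c^2=c(ec)=(ce)c$, which is what actually ties $c^2$ and $ce$ together and to the ideal you quotient by; without it (or something playing its role) the collapse of the quotient to three elements is unsubstantiated, and you must in any case still verify that $c\notin I$ and $e\notin I$, which you flag but do not do. Since this is the only genuinely hard point of the whole proposition, the proof as written has a gap precisely where the work is.

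It is instructive to compare with how the paper closes this same case. Working in the subepigroup $A$ generated by $e$ and $a$ (with $ea=a$), the paper takes $J$ to be the ideal generated by $ae$ rather than by $\overline{\overline a}$; then $a^2=a(ea)=(ae)a\in J$ for free, so every element of $A$ other than $e$ and $a$ falls into $J$ with no further hypotheses, and the entire analysis reduces to the single dichotomy ``$a\in J$ or not.'' If $a\notin J$ the Rees quotient is $P$; if $a\in J$ then either $a=a^k$ with $k>1$ (forcing $a\in\Gr S$, a contradiction) or $a=a^me$ (forcing $ae=a$ and hence the $C$ construction). This replaces your two-stage split on whether $ce$ and $c\,\overline g$ are group elements\,---\,a split whose second stage, as noted, is never actually exploited in your sketch\,---\,by a case distinction that directly drives the Rees-quotient argument. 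Your outline could probably be completed by switching to the ideal generated by $ce$ and reproducing this dichotomy, but as submitted the $P$ case is not proved.
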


\begin{proof}
\emph{Necessity}. It is well known and easy to check that the varietiy $\mathcal C$ is generated by the semigroup
$$C=\langle a,e\mid e^2=e,\,ae=ea=a,\,a^2=0\rangle=\{e,a,0\}\ldotp$$
This semigroups is finite, whence it is an epigroup. Let $S$ be one of the epigroups $C$ and $P$. Then $\Gr\,S=\{e,0\}$ and $ea=a\notin\Gr\,S$. We see that $\Gr\,S$ is not a right ideal in $S$, whence $C,P\notin\mathcal V$. Therefore, $\mathcal{C,P\nsubseteq V}$.

\smallskip

\emph{Sufficiency}. Let $S$ be an epigroup in $\mathcal V$ such that $\Gr\,S$ is not a right ideal in $S$. Then there are elements $x\in\Gr\,S$ and $y\in S$ with $xy\notin\Gr\,S$. Put $e=x^\omega$ and $a=xy$. Since $x\in\Gr\,S$, we have $ex=x$, and therefore $ea=exy=xy=a$. Let $A$ be the subepigroup in $S$ generated by the elements $e$ and $a$. Let now $J$ be the ideal in $A$ generated by the element $ae$. If $a\notin J$ then $e\notin J$ because $ea=a$. Hence the Rees quotient epigroup $A/J$ is isomorphic to the epigroup $P$. But this is impossible because $\var\,P=\mathcal{P\nsubseteq V}$. Therefore, $a\in J$, whence $a=baec$ for some $b,c\in A\cup\{1\}$. Suppose that $c\ne e$. In this case $c=ad$ for some $d\in A\cup\{1\}$ because $ea=a$ and the epigroup $A$ satisfies the identities~\eqref{x*=(x^2)*x=x(x^2)*}. Then $a=baec=baead=ba^2d$. According to Lemma~7 of~\cite{Shevrin-94}, $a$ is a group element in $A$, contradicting the choice of the elements $x$ and $y$. It remains to consider the case when $c=e$. Then $ae=(bae)e=bae^2=bae=a$. Let $K$ be the ideal in $A$ generated by the element $a^2$.  Now we apply~\cite[Lemma~7]{Shevrin-94} again and conclude that $a\notin K$. Then the equalities $ea=a$ and $ae=a$ show that the Rees quotient epigroup $A/K$ is isomorphic to the epigroup $C$. But this is not the case because $\var\,C=\mathcal{C\nsubseteq V}$.
\end{proof}

We mention that there is some inaccuracy in the formulation of~\cite[Theorem~3.2]{Volkov-00}. Namely, it contains the words `left ideal' rather than `right ideal'.

Note that semigroup varieties with the property that, for every its member $S$, the set $\Gr S$ is an ideal or a right ideal of $S$ were examined in~\cite{Tishchenko-90}.

An epigroup variety $\mathcal V$ is called a \emph{variety of epigroups with completely regular $n$-th power} if, for every $S\in\mathcal V$, the epigroup $S^n$ is completely regular.

\begin{lemma}
\label{without P and P-dual}
An epigroup variety of degree $\le n$ is a variety of epigroups with completely regular $n$-th power if and only if  it contains neither  $\mathcal P$ nor $\overleftarrow{\mathcal P}$.
\end{lemma}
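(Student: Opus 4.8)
The plan is to first reduce the assertion to a single inclusion and then prove the two implications separately, using the (dual of the) already established Proposition~\ref{GrS is right ideal} together with the hypothesis on the degree. Note first that for any epigroup $S$ the $n$th power $S^n$ is an ideal of $S$: a product of $n+1$ factors is a product of $n$ factors after bracketing the last two, so $S^{n+1}\subseteq S^n$ and therefore $SS^n,\,S^nS\subseteq S^n$. Consequently $S^n$ is completely regular if and only if $S^n\subseteq\Gr\,S$. Indeed, if $x\in S^n\cap\Gr\,S$ then $x^\omega=x\,\overline x\in S^n$ by the ideal property, whence the maximal subgroup $G_x=x^\omega G_x\subseteq x^\omega S\subseteq S^n$, so every element of $S^n$ that is a group element already lies in a subgroup of $S^n$. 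Thus the lemma amounts to: \emph{$S^n\subseteq\Gr\,S$ for all $S\in\mathcal V$} if and only if $\mathcal P,\overleftarrow{\mathcal P}\nsubseteq\mathcal V$.

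For the necessity of the condition I would argue contrapositively. The semigroup $P$ satisfies $P^2=P$, since $e,\ a=ea,\ 0=ae$ are all products of two elements, and hence $P^n=P$ for every $n$; but $a$ is not a group element, as $\overline{\overline a}=\overline 0=0\neq a$. So if $\mathcal P\subseteq\mathcal V$ then $P\in\mathcal V$ and $P^n=P\nsubseteq\Gr\,P$, i.e. $\mathcal V$ is not a variety with completely regular $n$th power; the variety $\overleftarrow{\mathcal P}$ is ruled out identically, using $\overleftarrow P$.

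For sufficiency, assume $\mathcal P,\overleftarrow{\mathcal P}\nsubseteq\mathcal V$ and fix $S\in\mathcal V$. The first step is to observe that $\mathcal C\nsubseteq\mathcal V$: since the identities $x^2=0$, $xy=yx$ imply $x^2=x^3$, $xy=yx$, we have $\mathcal F\subseteq\mathcal C$, and $\mathcal F$ contains a non-nilpotent nilsemigroup (the relatively free one on countably many generators, whose square-free monomials give arbitrarily long nonzero products); hence a variety all of whose nilsemigroups are nilpotent — in particular one of degree $\le n$ — cannot contain $\mathcal C$. Now Proposition~\ref{GrS is right ideal}, applied with $\mathcal C,\mathcal P\nsubseteq\mathcal V$, shows that $\Gr\,S$ is a right ideal of $S$; applying the same proposition to the dual variety $\overleftarrow{\mathcal V}$ (using that $\mathcal C$ is self-dual, that $\Gr\,(\overleftarrow S)=\Gr\,S$, and that $\mathcal C,\overleftarrow{\mathcal P}\nsubseteq\mathcal V$) shows that $\Gr\,S$ is a left ideal. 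Hence $\Gr\,S$ is a two-sided ideal of $S$. Because $\Gr\,S$ is an ideal, the Rees quotient $S/\Gr\,S$ is well defined, lies in $\mathcal V$, and is a nilsemigroup, since a suitable power of each element of $S$ lies in $\Gr\,S$, the zero of the quotient. As $\mathcal V$ has degree $\le n$, this nilsemigroup satisfies $x_1\cdots x_n=0$, which means precisely that every product of $n$ elements of $S$ lies in $\Gr\,S$; thus $S^n\subseteq\Gr\,S$, and by the reduction $S^n$ is completely regular.

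I expect the main obstacle to be the correct handling of the dualization — passing from ``right ideal'' to ``left ideal'' by invoking Proposition~\ref{GrS is right ideal} on $\overleftarrow{\mathcal V}$ and checking that $\mathcal C$ is self-dual while $\Gr$ is invariant under passage to the dual epigroup — and, tied to this, the verification that $\mathcal C$ lies outside every variety of finite degree, which is exactly what licenses the use of that proposition on both sides; the remaining steps (the ideal property of $S^n$, the nilsemigroup quotient, and the translation of degree $\le n$ into $x_1\cdots x_n=0$) are routine.
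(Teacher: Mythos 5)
Your proof is correct. The sufficiency half coincides with the paper's argument: rule out $\mathcal C$ via finite degree, invoke Proposition~\ref{GrS is right ideal} and its dual to make $\Gr\,S$ a two-sided ideal, pass to the nil Rees quotient $S/\Gr\,S$, and read off $S^n\subseteq\Gr\,S$ from the identity $x_1\cdots x_n=0$; you merely spell out two points the paper leaves implicit, namely why $\mathcal C$ has infinite degree (via $\mathcal F\subseteq\mathcal C$) and why $S^n\subseteq\Gr\,S$ forces $S^n$ to be completely regular (your observation that $S^n$ is an ideal and that each relevant maximal subgroup $G_x=x^\omega G_x$ is swallowed into $S^n$). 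The necessity half is where you genuinely diverge: the paper argues equationally, using Lemma~\ref{cr} to encode the hypothesis as the identity $x_1\cdots x_n=\overline{\overline{x_1\cdots x_n}}$ and then Lemma~\ref{x_1...x_n=w in P} (and its dual) to show this identity fails in $\mathcal P$ and $\overleftarrow{\mathcal P}$, whereas you compute directly in the three-element epigroup $P$ that $P^n=P$ (since $e=e^2$, $a=ea$, $0=ae$) while $a\notin\Gr\,P$. Your route is more elementary and self-contained; the paper's route has the advantage of reusing Lemma~\ref{x_1...x_n=w in P}, which is needed elsewhere anyway, and of not requiring any knowledge of the multiplication table of $P$ beyond what that lemma extracts. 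Both are complete proofs.
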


\begin{proof}
\emph{Necessity}. Let $\mathcal V$ be a variety of epigroups with completely regular $n$-th power. In view of Lemma~\ref{cr} $\mathcal V$ satisfies the identity
\begin{equation}
\label{x_1...x_n=(x_1...x_n)**}
x_1\cdots x_n=\,\overline{\overline{x_1\cdots x_n}}\ldotp
\end{equation}
But Lemma~\ref{x_1...x_n=w in P} and the dual statement imply that this identity is false in the varieties $\mathcal P$ and $\overleftarrow{\mathcal P}$.

\smallskip

\emph{Sufficiency}. Let $\mathcal V$ be a variety of epigroups of degree $\le n$ that contains neither  $\mathcal P$ nor $\overleftarrow{\mathcal P}$. Further, let $S\in\mathcal V$ and $J=\Gr S$. Clearly, the variety $\mathcal C$ is not a variety of finite degree, whence $\mathcal{V\nsupseteq C}$. Thus $\mathcal V$ contains none of the varieties $\mathcal C$, $\mathcal P$ and $\overleftarrow{\mathcal P}$. Now we may apply Proposition~\ref{GrS is right ideal} and the dual statement with the conclusion that $J$ is an ideal in $S$. If $x\in S$ then $x^n\in J$ for some $n$. This means that the Rees quotient $S/J$ is a nilsemigroup. Since $\mathcal V$ is a variety of degree $\le n$, this means that the epigroup $S/J$ satisfies the identity $x_1x_2\cdots x_n=0$. In other words, if $x_1,x_2,\dots,x_n\in S$ then $x_1x_2\cdots x_n\in J$. Therefore, $S^n\subseteq J$, whence the epigroup $S^n$ is completely regular.
\end{proof}

Let $\Sigma$ be a system of identities written in the language of unary semigroups. As we have already noted, the class $K_\Sigma$ is not obliged to be a variety. This claim is confirmed by the following

\begin{example}
\label{not closed}
Put $N_k=\langle a\mid a^{k+1}=0\rangle=\{a,a^2,\dots,a^k,0\}$ for any natural $k$. The semigroup $N_k$ is finite, therefore it is an epigroup. Put
$$N=\prod\limits_{k \in \mathbb N} N_k\ldotp$$
Obviously, the semigroup $N$ is not an epigroup because, for example, no power of the element $(a,\dots,a,\dots)$ belongs to a subgroup. Note that the epigroup $N_k$ is commutative for any $k$. We see that the class $K_\Sigma$ with $\Sigma=\{xy=yx\}$ is not a variety.
\end{example}

If $w$ is a semigroup word, then $\ell_x(w)$ denotes the number of occurrences of the letter $x$ in this word. Recall that a semigroup identity $v=w$ is called \emph{balanced} if $\ell_x(v)=\ell_x(w)$ for any letter $x$. We call an identity $v=w$ \emph{strictly unary} if $v$ and $w$ are non-semigroup words. We say that an identity $v=w$ \emph{follows from an identity system $\Sigma$ in the class of all epigroups} (or \emph{$\Sigma$ implies $v=w$ in the class of all epigroups}) if this identity holds in the class $K_\Sigma$. The following statement gives a complete description of identity systems $\Sigma$ such that $K_\Sigma$ is a variety.

\begin{proposition}
\label{when var}
Let $\Sigma$ be a system of identities written in the language of unary semigroups. The following are equivalent:
\begin{itemize}
\item[$1)$] $K_\Sigma$ is a variety;
\item[$2)$] $\Sigma$ implies in the class of all epigroups some mixed identity;
\item[$3)$] $\Sigma$ contains either a semigroup non-balanced identity or a mixed identity.
\end{itemize}
\end{proposition}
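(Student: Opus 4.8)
The plan is to prove the cyclic chain $3)\Rightarrow2)\Rightarrow1)\Rightarrow3)$. For $3)\Rightarrow2)$: if $\Sigma$ contains a mixed identity, that identity holds throughout $K_\Sigma$ and $2)$ is immediate. Otherwise $\Sigma$ contains a non-balanced semigroup identity $u=v$, and the plan is to specialize it to a one-variable relation. Substituting every letter by $x$ turns $u=v$ into $x^{\ell(u)}=x^{\ell(v)}$, which already has distinct exponents unless $\ell(u)=\ell(v)$; in the latter case I would send the letter $z$ witnessing $\ell_z(u)\ne\ell_z(v)$ to $x^2$ and every other letter to $x$, producing $x^{\ell(u)+\ell_z(u)}=x^{\ell(v)+\ell_z(v)}$ with distinct exponents. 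Either way $K_\Sigma$ satisfies an identity $x^p=x^{p+r}$ of the form~\eqref{x^p=x^{p+q}} with $p,r\ge1$, and the elementary passage from~\eqref{x^p=x^{p+q}} to~\eqref{*x=x^{(p+1)q-1}} recalled in Section~\ref{intr} shows that every epigroup in $K_\Sigma$ then satisfies the mixed identity $\overline x=x^{(p+1)r-1}$, giving $2)$.

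For $2)\Rightarrow1)$: the class $K_\Sigma$ consists of epigroups, so $K_\Sigma\subseteq\mathcal{EPI}$, and by hypothesis it satisfies a mixed identity; hence Corollary~\ref{finite index} yields $K_\Sigma\subseteq\mathcal E_n$ for some $n$. Since every member of the variety $\mathcal E_n$ is an epigroup, I would then identify $K_\Sigma$ with the meet $\mathcal E_n\wedge\var\,\Sigma$: a unary semigroup lies in this intersection precisely when it is an epigroup of index $\le n$ satisfying $\Sigma$, that is, precisely when it lies in $K_\Sigma$. Being the meet of two varieties, $K_\Sigma$ is a variety.

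The implication $1)\Rightarrow3)$ is where the real work lies, and I would argue by contraposition. Assume $\Sigma$ contains neither a non-balanced semigroup identity nor a mixed identity, so that every member of $\Sigma$ is either a balanced semigroup identity or a strictly unary identity. The idea is to reproduce the phenomenon of Example~\ref{not closed}: exhibit members of $K_\Sigma$ of unbounded index whose direct product is not an epigroup. I claim each monogenic nilsemigroup $N_k$ from that example lies in $K_\Sigma$. Indeed, an easy induction on weight (using $\overline a=0$ in any nilsemigroup, cf.\ Lemma~\ref{nil}) shows that every non-semigroup word evaluates to $0$ in $N_k$, so both sides of any strictly unary identity vanish and the identity holds; and since $N_k$ is monogenic, a substitution $x\mapsto a^{n_x}$ turns a balanced identity $u=v$ into $a^{\sum_x\ell_x(u)n_x}=a^{\sum_x\ell_x(v)n_x}$, whose two exponents agree, so every balanced semigroup identity holds as well. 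Thus $N_k\in K_\Sigma$ for all $k$, whereas the product $N=\prod_{k}N_k$ satisfies $\Sigma$ but, as in Example~\ref{not closed}, is not an epigroup; hence $N\notin K_\Sigma$ and $K_\Sigma$ fails to be closed under direct products, so it is not a variety.

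I expect the main obstacle to be the two verifications feeding $1)\Rightarrow3)$: checking that the chosen nilsemigroups satisfy exactly the balanced semigroup identities together with all strictly unary identities, and confirming that the product genuinely leaves the class of epigroups. A smaller subtlety is the extraction of a relation $x^p=x^q$ with $p\ne q$ from an arbitrary non-balanced identity, which needs the separate substitution above precisely when the two sides happen to have equal length.
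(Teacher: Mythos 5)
Your proof is correct and follows essentially the same route as the paper: the $N_k$ construction of Example~\ref{not closed} for $1)\Rightarrow3)$, the substitution trick producing an identity $x^p=x^{p+q}$ and hence the mixed identity~\eqref{*x=x^{(p+1)q-1}} for $3)\Rightarrow2)$, and Corollary~\ref{finite index} for $2)\Rightarrow1)$. The only cosmetic difference is that in $2)\Rightarrow1)$ you package the conclusion as the equality $K_\Sigma=\mathcal E_n\wedge\var\,\Sigma$, whereas the paper verifies closure under Cartesian products directly from the bound on the index; both rest on the same fact.
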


\begin{proof}
1)\,$\longrightarrow$\,3) Suppose that each identity in $\Sigma$ is either balanced or strictly unary. We note that the epigroup $N_k$ from Example~\ref{not closed} satisfies any balanced identity and any strictly unary one. In particular, any identity from $\Sigma$ holds in the epigroup $N_k$. Hence $N_k\in K_\Sigma$ for any $k$. Example~\ref{not closed} shows that the class $K_\Sigma$ is not a variety.

\smallskip

3)\,$\longrightarrow$\,2) The case when $\Sigma$ contains a mixed identity is evident. Suppose now that $\Sigma$ contains a semigroup non-balanced identity $v=w$. Then $\ell_x(u)\ne\ell_x(v)$ for some letter $x$. If $\ell(u)=\ell(v)$ then we substitute $x^2$ to $x$ in $u=v$. As a result, we obtain a semigroup non-balanced identity $u'=v'$ such that $K$ satisfies $u'=v'$, $\ell_x(u')\ne\ell_x(v')$ and $\ell(u')\ne\ell(v')$. This allows us to suppose that $\ell(u)\ne\ell(v)$. Substitute some letter $x$ to all letters occurring in this identity. We obtain an identity of the form~\eqref{x^p=x^{p+q}}. As it was mentioned above, this identity implies in the class of all epigroups the identity~\eqref{*x=x^{(p+1)q-1}}. It remains to note that this identity is mixed.

\smallskip

2)\,$\longrightarrow$\,1) Obviously, the class $K_\Sigma$ is closed under taking of subepigroups and homomorphisms. It remains to prove that it is closed under taking of Cartesian products. Let $\{S_i \mid i \in I\}$ be an arbitrary set of epigroups from $K_\Sigma$. Consider the semigroup
$$S=\prod\limits_{i \in I}S_{i}\ldotp$$
According to Corollary~\ref{finite index}, there exists a number $n$ such that $x^n\in\Gr T$ for any $T\in K_\Sigma$ and any $x\in T$. In particular, the epigroup $S_i$ for any $i\in I$ has this property. But then the semigroup $S$ also satisfies this condition, i.e., $S$ is an epigroup. Obviously, any identity from $\Sigma$ holds in the epigroup $S$. Therefore, $S\in K_\Sigma$ and we are done.
\end{proof}

As it was mentioned above, the classes $K_\Sigma$ and $\var_E\Sigma$ may differ even whenever $K_\Sigma$ is a variety. This claim is confirmed by the following example that is communicated to the authors by V.\,Shaprynski\v{\i}.

\begin{example}
\label{V_Sigma ne var_E Sigma}
Let $\Sigma=\{x=x^2\}$. Consider the two-element semilattice $T=\{e,0\}$. We define on $T$ the unary operation $^\ast$ by the rule $e^\ast=0^\ast=0$. Results of the article~\cite{Mikhailova-13} imply that the variety $\mathcal{EPI}$ has the following identity basis:
\begin{equation}
\label{basis of EPI}
(xy)z=x(yz),\,\overline{xy}\,x=x\,\overline{yx},\,\overline x\,^2x=\,\overline x,\,x^2\,\overline x\,=\,\overline{\overline x},\,\overline{\overline x\,x}\,=\,\overline x\,x,\,\overline{x^p}\,=\,{\overline x}\,^p
\end{equation}
where $p$ runs over the set of all prime numbers. So, any non-trivial identity from $\Delta$ is strictly unary. Therefore, these identities hold in $T$, whence $T\in\mathcal{EPI}\wedge\var \Sigma=\var_E\Sigma$. But $\overline e\,=e$. Therefore, the unary operation $^\ast$ is not the pseudoinversion on $T$, thus $T\notin\mathbf V[\Sigma]$.
\end{example}

Recall that a semigroup identity $u=v$ is called \emph{homotypical} if $c(u)=c(v)$, and \emph{heterotypical} otherwise. The following claim gives a classification of all identity systems $\Sigma$ such that $\mathbf V[\Sigma]=\var_E\Sigma$.

\begin{lemma}
\label{V_Sigma=var_E Sigma}
Let $\Sigma$ be a system of identities written in the language of unary semigroups. The following are equivalent:
\begin{itemize}
\item[\textup{a)}] $\mathbf V[\Sigma]=\var_E\Sigma$;
\item[\textup{b)}] $\var_E\Sigma$ satisfies a mixed identity;
\item[\textup{c)}] $\Sigma$ contains either a semigroup heterotypical identity or a mixed identity.
\end{itemize}
\end{lemma}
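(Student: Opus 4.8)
The plan is to establish the two equivalences a)\,$\Leftrightarrow$\,b) and b)\,$\Leftrightarrow$\,c), using Proposition~\ref{when var} and Corollary~\ref{finite index} as the main tools and keeping in mind throughout that $K_\Sigma\subseteq\var_E\Sigma$ always holds (an epigroup satisfying $\Sigma$ automatically satisfies $\Sigma\cup\Delta$).

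For a)\,$\to$\,b): if $\mathbf V[\Sigma]=\var_E\Sigma$ then, since the notation $\mathbf V[\Sigma]$ is legitimate, $K_\Sigma$ is a variety, so by the implication 1)\,$\to$\,2) of Proposition~\ref{when var} the system $\Sigma$ implies some mixed identity in the class of all epigroups; that identity holds in $K_\Sigma=\mathbf V[\Sigma]=\var_E\Sigma$, giving b). For b)\,$\to$\,a): if $\var_E\Sigma$ satisfies a mixed identity then, as $\var_E\Sigma\subseteq\mathcal{EPI}$, Corollary~\ref{finite index} shows every member of $\var_E\Sigma$ is an epigroup; since each such member also satisfies $\Sigma$, it lies in $K_\Sigma$, whence $\var_E\Sigma\subseteq K_\Sigma$ and the two classes coincide. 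In particular $K_\Sigma$ is then a variety and a) holds.

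For c)\,$\to$\,b) the case of a mixed identity in $\Sigma$ is immediate, and the heart of the matter is the heterotypical case. Given a semigroup identity $u=v$ in $\Sigma$ with $c(u)\ne c(v)$, I would pick a letter occurring on exactly one side, say $x\in c(u)\setminus c(v)$, and substitute $\overline t$ for $x$ and $t$ for every other letter, where $t$ is a new letter. Then $v$ becomes the semigroup word $t^{\ell(v)}$ while $u$ becomes a non-semigroup word, as it retains an occurrence of $\overline t$; thus this substitution instance of $u=v$ is a mixed identity holding in $\var_E\Sigma$, which is b). The point to get right is that heterotypicality is precisely what makes one and only one side lose the unary operation; for a homotypical identity the same substitution would leave $\overline t$ on both sides and produce a strictly unary identity, which explains why non-balanced but homotypical identities such as $x^2y=xy$ must be excluded from c) even though they are covered by the coarser condition 3) of Proposition~\ref{when var}.

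The remaining implication b)\,$\to$\,c) I would prove contrapositively: assuming every identity of $\Sigma$ is a semigroup homotypical identity or a strictly unary one, I must exhibit a member of $\var_E\Sigma$ that is not an epigroup, for then Corollary~\ref{finite index} forbids any mixed identity in $\var_E\Sigma$. The model to use is the two-element algebra $T$ of Example~\ref{V_Sigma ne var_E Sigma}: its semigroup reduct is a semilattice, so it satisfies every homotypical semigroup identity (words with equal content take equal values), and every strictly unary identity holds in it because both sides evaluate to $0$; together with $T\models\Delta$ this yields $T\in\var_E\Sigma$, while $T$ is not an epigroup because its unary operation is not pseudoinversion. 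I expect this step to be the conceptual crux: not because any single verification is difficult, but because it rests on the viewpoint, isolated in Example~\ref{V_Sigma ne var_E Sigma}, that a periodic unary semigroup satisfying $\Delta$ need not be an epigroup at all, the unary operation being free to differ from genuine pseudoinversion. This is exactly the phenomenon that separates the condition $\mathbf V[\Sigma]=\var_E\Sigma$ (heterotypical or mixed) from the weaker condition that $K_\Sigma$ be a variety (non-balanced or mixed).
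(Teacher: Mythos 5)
Your proof is correct and rests on exactly the same ingredients as the paper's: the unary semilattice $T$ of Example~\ref{V_Sigma ne var_E Sigma} as the separating model, the substitution of a pseudoinverted letter into a heterotypical identity to manufacture a mixed one, and Corollary~\ref{finite index} to pass from a mixed identity back to a genuine epigroup variety. The only difference is organizational: the paper closes the single cycle a)\,$\longrightarrow$\,c)\,$\longrightarrow$\,b)\,$\longrightarrow$\,a), whereas you prove the two equivalences separately (four implications, invoking Proposition~\ref{when var} for a)\,$\longrightarrow$\,b) and running $T$ through Corollary~\ref{finite index} for the contrapositive of b)\,$\longrightarrow$\,c)), which is slightly less economical but equally valid.
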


\begin{proof}
a)\,$\longrightarrow$\,c) Suppose that each identity in $\Sigma$ is either homotypical or strictly unary. Obviously, the unary semigroup $T$ from Example~\ref{V_Sigma ne var_E Sigma} satisfies all these identities, whence $T\in\var_E\Sigma$. But $T\notin\mathbf V[\Sigma]$, i.e., $\mathbf V[\Sigma]\ne\var_E\Sigma$.

\smallskip

c)\,$\longrightarrow$\,b) If the identity $u=v$ is mixed then the required assertion is obvious. Suppose that the identity $u=v$ is heterotypical. We may assume that there is some letter $x$ that occurs in the word $u$ but does not occur in the word $v$. We substitute $\overline x$ to $x$ in $u=v$. As a result, we obtain a mixed identity.

\smallskip

The implication b)\,$\longrightarrow$\,a) follows from Corollary~\ref{finite index}.
\end{proof}

Suppose that an identity $u=v$ holds in the variety $\var_E\Sigma$. In view of the generally known universal-algebraic considerations, this identity may be obtained from the set of identities $\Sigma\cup\Delta$ by using a finite number of the following operations:

$\bullet$ swap of the left and the right part of the identity,

$\bullet$ equating of two words that are equal to the same word,

$\bullet$ side-by-side multiplication of two identities,

$\bullet$ applying the unary operation to both parts of the identity,

$\bullet$ applying a substitution on $F$ to both parts of the identity.

\noindent For convenience of references, we formulate this fact as a lemma.

\begin{lemma}
\label{deduction in EPI}
Let $\Sigma$ be a system of idenitities written in the language of unary semigroups. If an identity $u=v$ holds in the variety $\var_E \Sigma$ then there exists a sequence of identities
\begin{equation}
\label{sequence}
u_0=v_0,\,u_1=v_1,\,\dots,\,u_m=v_m
\end{equation}
such that the identity $u_0=v_0$ lies in $\Sigma\cup\Delta$, the identity $u_m=v_m$ coincides with the identity $u=v$ and, for every $i=1,\dots,m$, one of the following holds:
\begin{itemize}
\itemsep=0pt
\item[\textup{(i)}] the identity $u_i=v_i$ lies in $\Sigma \cup \Delta$;
\item[\textup{(ii)}] there is $0\le j<i$ such that $u_i\equiv v_j$ and $v_i\equiv u_j$;
\item[\textup{(iii)}] there are $0\le j,k<i$ such that $u_j\equiv u_i$, $v_j\equiv u_k$ and $v_k\equiv v_i$;
\item[\textup{(iv)}] there are $0\le j,k<i$ such that $u_i\equiv u_ju_k$ and $v_i\equiv v_jv_k$;
\item[\textup{(v)}] there is $0\le j<i$ such that $u_i\equiv\,\overline{u_j}$ and $v_i\equiv\,\overline{v_j}$;
\item[\textup{(vi)}] there is $0\le j<i$ such that $u_i\equiv \xi(u_j)$ and $v_i\equiv \xi(v_j)$ for some substitution $\xi$ on $F$.\qed
\end{itemize}
\end{lemma}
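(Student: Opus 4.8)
The statement is precisely Birkhoff's completeness theorem for equational logic, specialized to the variety $\var_E\Sigma=\var(\Sigma\cup\Delta)$ of unary semigroups, whose signature consists of the associative binary multiplication together with the unary operation $\overline{\phantom x}$. Indeed, the six conditions (i)--(vi) are nothing but the standard inference rules of equational logic for this signature: (i) is invocation of an axiom from $\Sigma\cup\Delta$, (ii) is symmetry, (iii) is transitivity, (iv) and (v) express compatibility with multiplication and with the unary operation respectively, and (vi) is closure under substitutions of $F$. So the plan is to reproduce the usual relatively free algebra argument in this concrete setting.

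First I would introduce the binary relation $\theta$ on $F$ defined by putting $(u,v)\in\theta$ exactly when there is a finite sequence~\eqref{sequence} of the prescribed shape whose final entry is the identity $u=v$. The whole task then reduces to showing that $\theta$ coincides with the set of identities valid in $\var_E\Sigma$. One inclusion (soundness) is the easy converse and is not even needed here: each of the operations (i)--(vi) carries identities valid in $\var_E\Sigma$ to identities valid in $\var_E\Sigma$, so every member of $\theta$ holds in $\var_E\Sigma$. The content of the lemma is the reverse inclusion.

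For the reverse inclusion I would verify, rule by rule, that $\theta$ is a fully invariant congruence of $F$ containing $\Sigma\cup\Delta$: rule (i) gives $\Sigma\cup\Delta\subseteq\theta$; rules (ii) and (iii), together with reflexivity, make $\theta$ an equivalence relation; rules (iv) and (v) make it compatible with the two fundamental operations, hence a congruence; and rule (vi) makes it fully invariant, i.e. closed under all endomorphisms of $F$. Passing to the quotient, $F/\theta$ is then a unary semigroup in which every identity of $\Sigma\cup\Delta$ holds — here full invariance is exactly what guarantees that an axiom holds under all substitutions, not merely for the canonical generators — so $F/\theta\in\var(\Sigma\cup\Delta)=\var_E\Sigma$. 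Finally, if an identity $u=v$ holds throughout $\var_E\Sigma$, then it holds in $F/\theta$; evaluating it under the canonical projection $F\to F/\theta$ forces $u$ and $v$ to be $\theta$-related, which is precisely the assertion that a derivation~\eqref{sequence} of $u=v$ exists.

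These verifications are routine bookkeeping once the dictionary between the rules and the closure operations is in place, so I do not expect any single step to be genuinely difficult. The one point I would be careful about is reflexivity: the list (ii)--(vi) explicitly records symmetry, transitivity and the two congruence rules, but not the reflexive law $t=t$, which is nevertheless indispensable for $\theta$ to be a congruence and hence for the quotient $F/\theta$ to be defined. I would treat this in the manner that is standard for such formulations, namely by admitting reflexivity tacitly among the permissible steps (equivalently, by taking $\theta$ to be the reflexive closure of the relation generated by (i)--(vi)); checking that this convention does no harm — and that associativity is already built into the free unary semigroup $F$, so that bracketing never intervenes — is the only subtlety in an otherwise standard application of Birkhoff's theorem.
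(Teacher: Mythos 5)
Your proposal is correct and matches the paper's treatment: the authors state this lemma without proof, appealing precisely to ``generally known universal algebraic considerations'' --- that is, to Birkhoff's completeness theorem for equational logic --- and your fully invariant congruence / relatively free algebra argument is the standard proof of that fact for the unary semigroup signature. The one subtlety you flag (reflexivity) is in fact already covered, since $\Delta$ is defined as the set of \emph{all} identities valid in every epigroup and hence contains every trivial identity $t=t$, so reflexivity is available as an instance of rule~(i).
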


Lemma~\ref{deduction in EPI} immediately implies

\begin{corollary}
\label{t(p)=t(q)}
Let a variety $\var_E\Sigma$ satisfies an identity $u=v$. If $t(p)\equiv t(q)$ for any identity $p=q\in\Sigma$ then $t(u)\equiv t(v)$.\qed
\end{corollary}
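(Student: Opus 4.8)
The plan is to run an induction along the derivation provided by Lemma~\ref{deduction in EPI}. Writing $t$ for the terminal-letter operator, I would first record its two structural properties that make it compatible with the inference rules: $t(w_1w_2)\equiv t(w_2)$ for a product, and $t(\overline w)\equiv t(w)$ for the unary operation, so that $t(w)$ is literally the last letter of $w$ read as a string of symbols, irrespective of how the applications of $\overline{\phantom x}$ are nested. The goal is then to prove, by induction on $i$, that every identity $u_i=v_i$ in the sequence~\eqref{sequence} satisfies $t(u_i)\equiv t(v_i)$; applying this to $u_m=v_m$, which coincides with $u=v$, yields the conclusion.

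For the base of the induction I must know that every axiom, i.e.\ every identity in $\Sigma\cup\Delta$, has equal terminal letters on its two sides. For identities of $\Sigma$ this is exactly the hypothesis. For identities of $\Delta$ I would exhibit a single epigroup that ``reads off'' the last letter: let $R$ be the right-zero semigroup on the set of letters, with multiplication $rs=s$. Being a band, $R$ is completely regular, hence an epigroup, and $\overline r=r$ for every $r\in R$; consequently, under the substitution sending each letter to itself the unary operation acts as the identity map, so an arbitrary word $w$ evaluates in $R$ to its last letter $t(w)$. Thus any identity of $\Delta$, being valid in $R$, forces $t(u)\equiv t(v)$. (Alternatively, one may simply observe that the finite basis~\eqref{basis of EPI} of $\mathcal{EPI}$ consists of identities with equal terminal letters and that the derivation may be carried out over $\Sigma$ together with this basis.)

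For the induction step I would check that each of the rules (i)--(vi) preserves the invariant $t(u_i)\equiv t(v_i)$, granting it for all earlier identities. Rule~(i) is the base case; rules~(ii) and~(iii) are immediate chains of equalities of letters; rule~(iv) uses $t(u_ju_k)\equiv t(u_k)$ together with the invariant for the $k$-th identity; rule~(v) uses $t(\overline{u_j})\equiv t(u_j)$. The only step requiring a genuine observation is the substitution rule~(vi): here I would prove, by a short structural induction on $w$, the equality $t(\xi(w))\equiv t\bigl(\xi(t(w))\bigr)$ for every substitution $\xi$, and then, combining it with $t(u_j)\equiv t(v_j)$, conclude $t(\xi(u_j))\equiv t(\xi(v_j))$. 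I expect the base case for $\Delta$ and this behaviour of $t$ under substitution to be the only non-mechanical points; everything else is a routine verification driven by the two structural properties of $t$ recorded at the outset.
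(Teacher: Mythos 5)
Your proposal is correct and follows exactly the route the paper intends: the paper states the corollary as an immediate consequence of Lemma~\ref{deduction in EPI}, and your induction along the deduction~\eqref{sequence}, using $t(w_1w_2)\equiv t(w_2)$ and $t(\overline{w})\equiv t(w)$, is precisely the unpacking of that claim. Your right-zero-semigroup argument for the base case is a clean way to dispose of all of $\Delta$ at once (the paper leaves this implicit; one could equally inspect the basis~\eqref{basis of EPI}, as you note), and your treatment of the substitution rule~(vi) via $t(\xi(w))\equiv t\bigl(\xi(t(w))\bigr)$ is exactly the one non-mechanical point.
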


The sequence of identities~\eqref{sequence} with the properties mentioned in Lemma~\ref{deduction in EPI} is called a \emph{deduction of the identity $v=w$ from the identity system} $\Sigma$.

\begin{lemma}
\label{vz=wz to v=w}
Let $\Theta=\{p_i=q_i\mid i\in I\}$ be an identity system written in the language of unary semigroups and $x$ a letter. Put $\Sigma=\{p_ix=q_ix\mid i\in I\}$. If an identity $ux=vx$ holds in the variety $\var_E\Sigma$, then the identity $u=v$ holds in the variety $\var_E\Theta$.
\end{lemma}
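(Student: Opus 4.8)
The plan is to argue on the level of deductions rather than semantically, since the naive semantic idea — pass to an algebra in $\var_E\Sigma$ in which $x$ acts as a right identity — does not work: right multiplication by a letter is not cancellable in a general unary semigroup, and adjoining a two‑sided identity to a member of $\var_E\Theta$ need not land back in $\var_E\Sigma$ (the identities of $\Theta$ are not preserved under the substitution that sends a variable to the identity). As a preliminary remark, since $p_i=q_i$ yields $p_ix=q_ix$ by right multiplication, one has $\var_E\Theta\subseteq\var_E\Sigma$, so $ux=vx$ already holds in $\var_E\Theta$; but this inclusion alone cannot give $u=v$, because $x$ cannot be cancelled, and the finer structure of $\Sigma$ (that every axiom ends in the same letter $x$) has to be exploited. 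Accordingly I would let $\theta_\Sigma$ and $\theta_\Theta$ denote the fully invariant congruences of $F$ corresponding to $\var_E\Sigma$ and $\var_E\Theta$, restate the goal as the inclusion $\{(a,b):(ax,bx)\in\theta_\Sigma\}\subseteq\theta_\Theta$ (which contains the pair $(u,v)$ by hypothesis), and prove it by analysing, via Lemma~\ref{deduction in EPI}, a deduction $u_0=v_0,\dots,u_m=v_m$ of $ux=vx$ from $\Sigma$.

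The one transparent point is the role of the axioms of $\Sigma$. An application of $p_ix=q_ix$ under a substitution $\xi$ reads $\xi(p_i)\,\xi(x)=\xi(q_i)\,\xi(x)$, and this is reproduced over $\Theta$ by first applying the $\Theta$‑axiom $p_i=q_i$ under $\xi$, obtaining $\xi(p_i)=\xi(q_i)$, and then right multiplying by $\xi(x)$; thus every use of a $\Sigma$‑axiom is the image, with one trailing factor appended, of the corresponding $\Theta$‑axiom. To turn this into an induction one wants a ``right division by $x$'' performed along the whole deduction: attach to every word $w$ the factorisation $w=w^\ast\,t(w)$ furnished by Lemma~\ref{w=w*z in EPI}, and carry the invariant that after removing one trailing occurrence of $x$ the two sides of the current identity stay $\Theta$‑equivalent, i.e. $(u_k^\ast,v_k^\ast)\in\theta_\Theta$ whenever $t(u_k)\equiv t(v_k)\equiv x$. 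Propagating this invariant through the swap rule (ii), the transitivity rule (iii) and the substitution rule (vi) is routine, and the last line $ux=vx$ then yields exactly $u=v$ over $\Theta$.

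The main obstacle is the coherence of this trailing letter. Intermediate identities of the deduction need not end in $x$ at all, so a per‑line ``strip one $x$'' operation is not even well defined: the transitivity rule (iii) may chain $ux$ to $vx$ through a word that does not terminate in $x$, the multiplication rule (iv) may push the active subword to the left of other material, the unary rule (v) buries the last letter inside a bar, and the epigroup axioms of Lemma~\ref{simplest identities} (notably $\overline x\,^2x=\overline x$ and \eqref{x*=(x^2)*x=x(x^2)*}) create, relocate or hide a trailing $x$. The crux is therefore to fix the factorisation $w=w^\ast\,t(w)$ uniformly — systematically invoking \eqref{x*=(x^2)*x=x(x^2)*} to expose a genuine trailing multiplicative letter exactly as in the proof of Lemma~\ref{w=w*z in EPI}, and \eqref{xx*=x*x=x^0} to normalise the resulting words — so that the operation of deleting one trailing $x$ commutes with each elementary step and, in particular, survives passage through words that are not manifestly $x$‑terminal. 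Once this bookkeeping is arranged to be compatible with rules (iii)–(v), the induction closes and produces the desired deduction of $u=v$ from $\Theta$.
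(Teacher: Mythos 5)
Your skeleton coincides with the paper's: induct along a deduction of $ux=vx$ from $\Sigma\cup\Delta$ and strip a trailing letter from each line via the factorisation $w=w^\ast\,t(w)$ of Lemma~\ref{w=w*z in EPI}. But the step you yourself label ``the crux'' --- making the stripping operation coherent on lines that do not end in $x$ --- is precisely the step you do not supply, and the invariant you propose, namely that $u_k^\ast=v_k^\ast$ holds over $\Theta$ \emph{whenever} $t(u_k)\equiv t(v_k)\equiv x$, is the wrong one: it carries no information about lines ending in another letter, so it cannot be propagated through rule (iv), where the conclusion about $u_ju_k$ requires knowing something about $u_j=v_j$ (which may end in any letter), nor through rule (vi), where $\xi(u_j)$ may end in $x$ while $u_j$ ends in some $y\not\equiv x$. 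The correct move is to drop the restriction: since every axiom of $\Sigma$ has both sides ending in $x$ and every axiom of the basis~\eqref{basis of EPI} of $\Delta$ has matching last letters, Corollary~\ref{t(p)=t(q)} guarantees that \emph{every} line $u_k=v_k$ of the deduction satisfies $t(u_k)\equiv t(v_k)$, and the invariant to carry is that $u_k^\ast=v_k^\ast$ holds in $\var_E\Theta$ for every $k$, the stripped letter being the common last letter of that line, whatever it is. At $k=m$ this specialises to $u=v$.

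Even with the corrected invariant the remaining verifications are not free, and you have not performed them. For the base case with $u_0=v_0\in\Delta$ one must check $u_0^\ast=v_0^\ast$ in $\mathcal{EPI}$ axiom by axiom for~\eqref{basis of EPI}. For rule (iv) one has $u_i^\ast\equiv u_ju_k^\ast$ and $v_i^\ast\equiv v_jv_k^\ast$, so one must first recover the \emph{unstripped} identity $u_j=v_j$ over $\var_E\Theta$ from $u_j^\ast=v_j^\ast$ --- by right-multiplying by the common last letter $y$ of $u_j$ and $v_j$ and using that $u_j=u_j^\ast y$ and $v_j=v_j^\ast y$ hold in $\mathcal{EPI}\supseteq\var_E\Theta$ --- and only then multiply with $u_k^\ast=v_k^\ast$. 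For rule (v) the construction of ${}^\ast$ yields $u_i^\ast\equiv\overline{u_j}\,^2u_j^\ast$ and $v_i^\ast\equiv\overline{v_j}\,^2v_j^\ast$ (one application of~\eqref{x*=(x^2)*x=x(x^2)*}), which is then deduced from $u_j=v_j$ and $u_j^\ast=v_j^\ast$; for rule (vi) one uses $\bigl(\xi(u_j)\bigr)^\ast\equiv\xi(u_j^\ast)\bigl(\xi(y)\bigr)^\ast$. Your text gestures at all of this (``routine'', ``once this bookkeeping is arranged'') without doing it; since the entire content of the lemma lives in exactly this bookkeeping, what you have written is a correct plan with the decisive lemma-specific work left out.
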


\begin{proof}
Let~\eqref{sequence} be a deduction of the identity $ux=vx$ from the identity system $\Sigma$. Let $1\le i\le m$. Corollary~\ref{t(p)=t(q)} implies that $t(u_i)\equiv t(v_i)$. We are going to verify that the variety $\var_E\Sigma$ satisfies the identity $u_i^\ast=v_i^\ast$. Then, in particular, $\var_E\Sigma$ satisfies $u=v$. We will use induction by $i$. It will be convenient for us to suppose that $\Delta$ is an identity basis of the variety $\mathcal{EPI}$ rather than the set of all identities that hold in this variety. It is clear that, under this assumption, all considerations are valid. Thus, we assume that $\Delta$ coincides with the identity system~\eqref{basis of EPI}.

\smallskip

\emph{Induction base}. If $u_0=v_0\in\Sigma$ then the statement is evident. If $u_0=v_0\in\Delta$, then it may be verified easily that the identity $u_0^\ast=v_0^\ast$ holds in the variety $\mathcal{EPI}$.

\smallskip

\emph{Induction step}. Let now $i>0$. We need consider the cases (i)--(vi) of Lemma~\ref{deduction in EPI}.{\sloppy

}

\smallskip

(i) This case is proved analogously to the induction base.

\smallskip

(ii) The identity $u_i^\ast=v_j^\ast$ holds in the variety $\var_E\Theta$ by the induction assumption. Since the identities $u_i^\ast=v_i^\ast$ and $v_j^\ast=u_j^\ast$ coincide, we are done.

\smallskip

(iii) The identities $u_j^\ast=v_j^\ast$ and $u_k^\ast=v_k^\ast$ (i.e., $u_i^\ast=u_k^\ast$ and $u_k^\ast=v_i^\ast$, respectively) hold in $\var_E\Theta$ by the induction assumption. Therefore, the identity $u_i^\ast=v_i^\ast$ holds in $\var_E\Theta$ as well.

\smallskip

(iv) Note that $u_i^\ast\equiv u_ju_k^\ast$ and $v_i^\ast\equiv v_j v_k^\ast$. Let $y\equiv t(u_j)\equiv t(v_j)$. Using the induction assumption, we conclude that the identities $u_j^\ast=v_j^\ast$ and $u_k^\ast=v_k^\ast$ hold in $\var_E\Theta$. Multiplying the former identity on a letter $y$ on the right, we see that $\var_E\Theta$ satisfies the identity $u_j^\ast y=v_j^\ast y$. Since the variety $\mathcal{EPI}$ satisfies the identities $u_j=u_j^\ast y$, $v_j=v_j^\ast y$ and $\var_E\Theta\subseteq\mathcal{EPI}$, we have that $\var_E\Theta$ satisfies the identity $u_j=v_j$. Multiplying this identity and the identity $u_k^\ast=v_k^\ast$, we conclude that $\var_E\Theta$ satisfies the identity $u_i^\ast=v_i^\ast$.

\smallskip

(v) The identity $u_j^\ast=v_j^\ast$ holds in the variety $\var_E\Theta$ by the induction assumption. Note that $u_i^\ast\equiv\,\overline{u_j}\,^2u_j^\ast$ and $v_i^\ast\equiv\,\overline{v_j}\,^2v_j^\ast$. As we have seen in the case~(iv), $\var_E\Theta$ satisfies the identity $u_j=v_j$. It is evident that the identity $\overline{u_j}\,^2u_j^\ast=\,\overline{v_j}\,^2v_j^\ast$ may be deduced from the identities $u_j=v_j$ and $u_j^\ast=v_j^\ast$. Hence $\var_E\Theta$ satisfies $u_i^\ast=v_i^\ast$.

\smallskip

(vi) As usual, $u_j^\ast=v_j^\ast$ holds in $\var_E\Theta$ by the induction assumption. Let $t(u_j)\equiv t(v_j)\equiv x$. Then $\bigl(\xi(u_j)\bigr)^\ast\equiv\xi(u^\ast_j)\bigl(\xi(x)\bigr)^\ast$ and $\bigl(\xi(v_j)\bigr)^\ast\equiv\xi(v^\ast_j)\bigl(\xi(x)\bigr)^\ast$. This implies that $\var_E\Theta$ satisfies the identity $u_i^\ast=v_i^\ast$.
\end{proof}

\section{Endomorphisms of the lattice EPI}
\label{proof of theorem 1}

To verify Theorem~\ref{endomorphisms}, we need some auxiliary facts.

\begin{lemma}
\label{letter on right}
Let $\Sigma=\{p_\alpha=q_\alpha\mid\alpha\in\Lambda\}$. If the variety $\var_E\Sigma$ satisfies an identity $u=v$ and $x$ is a letter that does not occur in the words $u,v,p_\alpha$ and $q_\alpha$ \textup(for all $\alpha\in\Lambda$\textup), then the identity $ux=vx$ follows from the identity system $\Sigma'=\{p_\alpha x=q_\alpha x\mid\alpha\in\Lambda\}$ in the class of all epigroups.
\end{lemma}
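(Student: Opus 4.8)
The plan is to argue semantically rather than by manipulating a deduction of $u=v$. Fix an arbitrary epigroup $S\in K_{\Sigma'}$; since the assertion ``$ux=vx$ follows from $\Sigma'$ in the class of all epigroups'' means exactly that $ux=vx$ holds in every such $S$, it suffices to show $S\models ux=vx$. The idea is to pass to the \emph{left regular representation} of $S$, which turns the freshly appended letter $x$ into a universally quantified right multiplier and thereby strips it off.

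First I would consider the map $\lambda\colon S\to\mathcal T(S)$ into the monoid of transformations of $S$ sending $a$ to the left translation $\lambda_a\colon t\mapsto at$. Since $\lambda_{ab}=\lambda_a\lambda_b$, this is a semigroup homomorphism, so its image $\Lambda(S)=\lambda(S)$ is a homomorphic image of $S$. Because a homomorphic image of an epigroup is again an epigroup and such a homomorphism preserves pseudoinversion (the elements $a^\omega$ and $\overline a$ being determined by the semigroup structure, so that $\lambda(\overline a)=\overline{\lambda_a}$), the structure $\Lambda(S)$ is an epigroup and $\lambda$ is a homomorphism of unary semigroups that commutes with evaluation of arbitrary unary words. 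Next I would translate the hypothesis $S\models\Sigma'$: for every $\alpha$ and every substitution $\theta$, the identity $p_\alpha x=q_\alpha x$ forces $\theta(p_\alpha)\,s=\theta(q_\alpha)\,s$ for all $s\in S$ (as $x$ is fresh), that is $\lambda_{\theta(p_\alpha)}=\lambda_{\theta(q_\alpha)}$. Since $\lambda$ is onto and commutes with word evaluation, this says precisely that $\Lambda(S)\models p_\alpha=q_\alpha$; hence $\Lambda(S)\in K_\Sigma\subseteq\var_E\Sigma$. As $u=v$ holds throughout $\var_E\Sigma$, it holds in $\Lambda(S)$, and pulling this back along $\lambda$ gives $\lambda_{\theta(u)}=\lambda_{\theta(v)}$ for every $\theta$; evaluating these equal translations at $\theta(x)$ yields $\theta(ux)=\theta(vx)$. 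As $\theta$ and $S$ were arbitrary, $ux=vx$ holds in $K_{\Sigma'}$.

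The step I expect to be the crux is the passage through the unary operation: a priori one knows only that the congruence $\ker\lambda$ (identifying elements with equal left translations) respects multiplication, and it is not evident that it respects pseudoinversion. This is exactly the obstacle blocking a naive induction on a deduction of $u=v$, where one gets stuck trying to infer $\overline{u_j}\,x=\overline{v_j}\,x$ from $u_jx=v_jx$, being unable to cancel the trailing $x$. The obstacle dissolves once one observes that $\ker\lambda$ is a \emph{semigroup} congruence on a genuine epigroup $S$, and every such congruence is automatically a unary-semigroup congruence because pseudoinversion is preserved by all epigroup homomorphisms. It is precisely here that the hypothesis that $S$ is an epigroup (rather than merely a member of $\var_E\Sigma'$) is used, which also explains why the conclusion is stated for the class $K_{\Sigma'}$ of epigroups and not for the whole variety $\var_E\Sigma'$.
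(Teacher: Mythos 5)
Your proof is correct, but it follows a genuinely different route from the paper's. The paper proceeds syntactically: it takes a deduction of $u=v$ from $\Sigma\cup\Delta$ (its Lemma~\ref{deduction in EPI}) and appends the letter $x$ step by step; the only hard case is the application of the unary operation, where the authors must show by hand that an epigroup in which $U_jb=V_jb$ for all $b$ also satisfies $\overline{U_j}\,b=\overline{V_j}\,b$, and this costs a long chain of explicit computations (the identities $V_j^{s+1}=U_j^sV_j$, $U_j^\omega V_j=\overline{\overline{V_j}}$, $\overline{U_j}\,V_j^\omega=\overline{V_j}$, and so on). You replace the whole induction by one semantic observation: for an epigroup $S$ satisfying $\Sigma'$, the left regular representation $\lambda$ identifies exactly the pairs with equal left translations, its image $\Lambda(S)$ is a homomorphic image of $S$, hence an epigroup lying in $K_\Sigma\subseteq\var_E\Sigma$, and the standard fact that a semigroup homomorphism between epigroups automatically preserves pseudoinversion lets $u=v$ descend to $\Lambda(S)$ and pull back to $\theta(u)s=\theta(v)s$ for all $s$. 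Your diagnosis of the crux is exactly right: the paper's case (v) is nothing but a hands-on verification that $\ker\lambda$ respects pseudoinversion, which you obtain instead from the preservation fact --- a fact the paper itself treats as known when it calls the closure of $K_\Sigma$ under homomorphic images ``obvious'' in the proof of Proposition~\ref{when var}, so invoking it is legitimate. The paper's argument buys self-containedness (the case-(v) computations are not reused elsewhere, so nothing is lost by avoiding them); yours buys brevity and a conceptual explanation of why the trailing letter can be stripped only over genuine epigroups, which matches the lemma's restriction to the class $K_{\Sigma'}$ rather than $\var_E\Sigma'$. A minor remark: your argument never uses that $x$ is absent from $u$ and $v$, only that it is absent from the $p_\alpha$ and $q_\alpha$; this is harmless, as the stated hypothesis is simply stronger than what you need.
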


\begin{proof}
Let~\eqref{sequence} be a deduction of the identity $u=v$ from the identity system $\Sigma$. Let $1\le i\le m$. Corollary~\ref{t(p)=t(q)} implies that $t(u_i)\equiv t(v_i)$. Let $y$ be a letter with $y\not\equiv x$. If the letter $x$ occurs in some identities of the sequence~\eqref{sequence} then we substitute $y$ to $x$ in all such identities. The identities from $\Sigma\cup\{u=v\}$ will not change because these identities do not contain the letter $x$, and the identities from $\Delta$ will still remain in $\Delta$. The sequence we obtain is a deduction of the identity $u=v$ from the identity system $\Sigma\cup\Delta$ again, and all the identities of this deduction do not contain the letter $x$. We may assume without any loss that already the deduction~\eqref {sequence} possesses the last property.

For each $i=0,1,\dots,m$, the identity $u_i=v_i$ holds in the variety $\var_E\Sigma$. Since the identity $u_m=v_m$ coincides with the identity $u=v$, it suffices to verify that, for each $i=0,1,\dots,m$, the identity $u_ix=v_ix$ follows from the identity system $\Sigma'$ in the class of all epigroups. The proof of this claim is given by induction on $i$.

\smallskip

\emph{Induction base} is evident because the identity $u_0=v_0$ lies in $\Sigma\cup\Delta$.

\smallskip

\emph{Induction step}. Let now $i>0$. One can consider the cases (i)--(vi).

\smallskip

(i) This case is obvious.

\smallskip

(ii) By the induction assumption, the identity $u_jx=v_jx$ follows from the identity system $\Sigma'$ in the class of all epigroups. Since the identity $u_ix=v_ix$ coincides with the identity $v_jx=u_jx$, we are done.

\smallskip

(iii) By the induction assumption, the identities $u_jx=v_jx$ (i.e., $u_ix=u_kx$) and $u_kx=v_kx$ (i.e., $u_kx=v_ix$) follow from the identity system $\Sigma'$ in the class of all epigroups. Therefore, the identity $u_ix=v_ix$ follows from the identity system $\Sigma'$ in the class of all epigroups too.

\smallskip

(iv) By the induction assumption, the identities $u_jx=v_jx$ and $u_kx=v_kx$ follow from the identity system $\Sigma'$ in the class of all epigroups. We substitute $u_kx$ to $x$ in the identity $u_jx=v_jx$. Since the letter $x$ does not occur in the words $u_j$ and $v_j$, we obtain the identity $u_ju_kx=v_ju_kx$, i.e., $u_ix=v_ju_kx$. Further, we multiply the identity $u_kx=v_kx$ on $v_j$ from the left. Here we obtain the identity $v_ju_kx=v_jv_kx$, i.e., $v_ju_kx=v_ix$. We see that the identity system $\Sigma'$ implies the identities $u_ix=v_ju_kx$ and $v_ju_kx=v_ix$ in the class of all epigroups, whence the identity $u_ix=v_ix$ also follows from $\Sigma'$ in the class of all epigroups.

\smallskip

(v) By the induction assumption, the identity $u_jx=v_jx$ follows from the identity system $\Sigma'$ in the class of all epigroups. Since $u_i\equiv\,\overline{u_j}$ and $v_i\equiv\,\overline{v_j}$, it remains to verify that the identity $\overline{u_j}\,x=\,\overline{v_j}\,x$ follows from the identity system $\Sigma'$ in the class of all epigroups. Suppose that an epigroup $S$ satisfies the identity $u_j x=v_j x$ and $\bigl|c(u_j)\cup c(v_j)\bigr|=k$. We fix arbitrary elements $a_1,\dots,a_k$ and $b$ in $S$. Put $U_j=u_j(a_1,\dots,a_k)$ and $V_j=v_j(a_1,\dots,a_k)$. Then
\begin{align}
\label{U_jb=V_jb}
U_jb=V_jb\ldotp
\end{align}
We need to verify that $\overline{U_j}\,b=\,\overline{V_j}\,b$. First of all, we verify that
\begin{equation}
\label{V_j^{s+1}=U_j^sV_j}
V_j^{s+1}=U_j^sV_j
\end{equation}
for any natural $s$. We use induction by $s$. If $s=1$ then the equality~\eqref{V_j^{s+1}=U_j^sV_j} coincides with~\eqref{U_jb=V_jb} where $b=V_j$. If $s>1$ then
\begin{align*}
V_j^{s+1}&=V_jV_j^s\\
&=U_jV_j^s&&\text{by~\eqref{U_jb=V_jb} with }b=V_j^s\\
&=U_jU_j^{s-1}V_j&&\text{by the induction assumption}\\
&=U_j^sV_j,&&
\end{align*}
and the equality~\eqref{V_j^{s+1}=U_j^sV_j} is proved. The equality~\eqref{U_jb=V_jb} with $b=V_j$ and~\eqref{xx*=x*x=x^0} imply that $U_j^\omega V_j=\,\overline{U_j}\,U_jV_j=\,\overline{U_j}\,V_j^2$. Thus,
\begin{equation}
\label{U_j*V_j^2=U_j^0V_j}
\overline{U_j}\,V_j^2=U_j^\omega V_j\ldotp
\end{equation}
Let now $s$ be a natural number with $s\ge2$. Using~\eqref{U_j*V_j^2=U_j^0V_j}, we have
$$\overline{U_j}\,\rule{0pt}{10pt}^sV_j\rule{0pt}{10pt}^s=\,\overline{U_j}\,\rule{0pt}{10pt}^{s-1}\bigl(\,\overline{U_j}\,V_j\rule{0pt}{10pt}^2\bigr)V_j\rule{0pt}{10pt}^{s-2}=\,\overline{U_j}\,\rule{0pt}{10pt}^{s-1}U_j\rule{0pt}{10pt}^\omega V_jV_j\rule{0pt}{10pt}^{s-2}=\,\overline{U_j}\,\rule{0pt}{10pt}^{s-1}V_j\rule{0pt}{10pt}^{s-1}\ldotp$$
Therefore, $\overline{U_j}\,\rule{0pt}{10pt}^sV_j\rule{0pt}{10pt}^s=\,\overline{U_j}\,\rule{0pt}{10pt}^{s-1}V_j\rule{0pt}{10pt}^{s-1}=\cdots=\,\overline{U_j}\,V_j$. Thus,
\begin{equation}
\label{(U_j*)^sV_j^s=U_j*V_j}
\overline{U_j}\,\rule{0pt}{10pt}^sV_j\rule{0pt}{10pt}^s=\,\overline{U_j}\,V_j
\end{equation}
for any natural $s$. Since $S$ is an epigroup, there are numbers $g$ and $h$ such that $U_j^g,V_j^h\in\Gr S$. Put $m=\max\{g,h\}$. For any $s\ge m$ we have
\begin{align*}
U_j^\omega V_j^s&=U_j^\omega(V_j^sV_j^\omega)&&\text{because }V_j^s\in G_{V_j}\text{ by Lemma~\ref{big powers}}\\
&=U_j^\omega(V_j^{s+1}\,\overline{V_j}\,)&&\text{by~\eqref{xx*=x*x=x^0}}\\
&=(U_j^\omega U_j^s)V_j\,\overline{V_j}&&\text{by~\eqref{V_j^{s+1}=U_j^sV_j}}\\
&=(U_j^sV_j)\,\overline{V_j}&&\text{because }U_j^s\in G_{U_j}\text{ by Lemma~\ref{big powers}}\\
&=V_j^{s+1}\,\overline{V_j}&&\text{by~\eqref{V_j^{s+1}=U_j^sV_j}}\\
&=V_j^sV_j^\omega&&\text{by~\eqref{xx*=x*x=x^0}}\\
&=V_j^s&&\text{because }V_j^s\in G_{V_j}\text{ by Lemma~\ref{big powers}}\ldotp
\end{align*}
Thus,
\begin{equation}
\label{U_j^0V_j^s=V_j^s}
U_j^\omega V_j^s=V_j^s
\end{equation}
for any $s\ge m$. Note also that
\begin{align*}
U_j^\omega V_j&=\,\overline{U_j}\,\rule{0pt}{10pt}^mU_j\rule{0pt}{10pt}^mV_j&&\text{by~\eqref{x^n(x*)^n=(x*)^nx^n=x^0}}\\
&=\,\overline{U_j^m}\,(U_j^mV_j)&&\text{by~\eqref{(x^n)*=(x*)^n}}\\
&=\,\overline{U_j^m}\,V_j^{m+1}&&\text{by~\eqref{V_j^{s+1}=U_j^sV_j}}\\
&=\,\overline{U_j^m}\,V_j^{m+1}V_j^\omega&&\text{because }V_j^{m+1}\in G_{V_j}\text{ by Lemma~\ref{big powers}}\\
&=\,\overline{U_j^m}\,V_j^{m+1}(V_j^m)^\omega&&\text{because }G_{V_j}=G_{V_j^m}\\
&=\,\overline{U_j^m}\,V_j^{m+1}V_j^m\,\overline{V_j^m}&&\text{by~\eqref{xx*=x*x=x^0}}\\
&=\bigl(\,\overline{U_j}\,\rule{0pt}{10pt}^mV_j\rule{0pt}{10pt}^m\bigr)V_j\rule{0pt}{10pt}^{m+1}\,\overline{V_j^m}&&\text{by~\eqref{(x^n)*=(x*)^n}}\\
&=\,\overline{U_j}\,V_jV_j^{m+1}\,\overline{V_j^m}&&\text{by~\eqref{(U_j*)^sV_j^s=U_j*V_j}}\\
&=(\,\overline{U_j}\,V_j^2)(V_j^m\,\overline{V_j^m}\,)&&\\
&=(U_j^\omega V_j)(V_j^m\,\overline{V_j^m}\,)&&\text{by~\eqref{U_j*V_j^2=U_j^0V_j}}\\
&=V_j^{m+1}\,\overline{V_j^m}&&\text{by~\eqref{U_j^0V_j^s=V_j^s}}\\
&=V_jV_j\rule{0pt}{10pt}^m\,\overline{V_j}\,\rule{0pt}{10pt}^m&&\text{by~\eqref{(x^n)*=(x*)^n}}\\
&=V_jV_j^\omega&&\text{by~\eqref{x^n(x*)^n=(x*)^nx^n=x^0}}\\
&=\,\overline{\overline{V_j}}&&\text{by~\eqref{x^0x=xx^0=x**}}\ldotp
\end{align*}
Thus,
\begin{equation}
\label{U_j^0V_j=V_j**}
U_j^\omega V_j=\,\overline{\overline{V_j}}\ldotp
\end{equation}
Besides that,
\begin{align*}
\overline{U_j}\,V_j^\omega&=\bigl(\,\overline{U_j}\,V_j\rule{0pt}{10pt}^2\bigr)\,\overline{V_j}\,\rule{0pt}{10pt}^2&&\text{by~\eqref{x^n(x*)^n=(x*)^nx^n=x^0}}\\
&=(U_j\rule{0pt}{10pt}^\omega V_j)\,\overline{V_j}\,\rule{0pt}{10pt}^2&&\text{by~\eqref{U_j*V_j^2=U_j^0V_j}}\\
&=\,\overline{\overline{V_j}}\;\overline{V_j}\,\rule{0pt}{10pt}^2&&\text{by~\eqref{U_j^0V_j=V_j**}}\\
&=V_j^\omega\,\overline{V_j}&&\text{because }\overline{\overline{V_j}}\text{ and }\overline{V_j}\text{ are mutually inverse in }G_{V_j}\\
&=\,\overline{V_j}&&\text{because }\overline{V_j}\,\in G_{V_j}\ldotp
\end{align*}
Thus,
\begin{equation}
\label{U_j*V_j^0=V_j*}
\overline{U_j}\,V_j^\omega=\,\overline{V_j}\ldotp
\end{equation}
Finally, we have
\begin{align*}
\overline{U_j}\,b&=\,\overline{U_j^2}\,(U_jb)&&\text{by~\eqref{x*=(x^2)*x=x(x^2)*}}\\
&=\,\overline{U_j^2}\,(V_jb)&&\text{by~\eqref{U_jb=V_jb}}\\
&=\,\overline{U_j^2}\,(U_j^\omega V_j)b&&\text{because }\overline{U_j^2}\,\in G_{U_j}\\
&=\,\overline{U_j^2}\,\bigl(\,\overline{\overline{V_j}}\,b\bigr)&&\text{by~\eqref{U_j^0V_j=V_j**}}\\
&=(\,\overline{U_j^2}\,V_j^\omega)V_jb&&\text{by~\eqref{x^0x=xx^0=x**}}\\
&=\,\overline{U_j}\,\rule{0pt}{10pt}^2V_j\rule{0pt}{10pt}^\omega V_jb&&\text{by~\eqref{(x^n)*=(x*)^n}}\\
&=\,\overline{U_j}\;\overline{V_j}\,V_jb&&\text{by~\eqref{U_j*V_j^0=V_j*}}\\
&=\,\overline{U_j}\,V_j^\omega\,\overline{V_j}\,V_jb&&\text{because }\overline{V_j}\,\in G_{V_j}\\
&=\,\overline{U_j}\,V_j^\omega\,\overline{V_j}\,V_jb&&\text{because }\overline{V_j}\,\in G_{V_j}\\
&=\,\overline{V_j}\,(\,\overline{V_j}\,V_j)b&&\text{by~\eqref{U_j*V_j^0=V_j*}}\\
&=\,\overline{V_j}\,V_j^\omega b&&\text{by~\eqref{xx*=x*x=x^0}}\\
&=\,\overline{V_j}\,b&&\text{because }\overline{V_j}\,\in G_{V_j}\ldotp
\end{align*}
We prove that $\overline{U_j}\,b=\,\overline{V_j}\,b$. This completes a consideration of the case (v).

\smallskip

(vi) By the induction assumption, the identity $u_jx=v_jx$ follows from the identity system $\Sigma'$ in the class of all epigroups. We may assume without any loss that $c(u_j)\cup c(v_j)=\{x_1,\dots,x_k\}$ and the identity $u_i=v_i$ is obtained from the identity $u_j=v_j$ by a substitution of some word $w$ for some letter that occurs in the identity $u_j=v_j$. Since $x\notin c(u_i)\cup c(v_i)$, the letter $x$ does not occur in the word $w$. We substitute $w$ to $x$ in the identity $u_jx=v_jx$. Then we obtain the identity $u_ix=v_ix$. Therefore, this identity follows from the identity system $\Sigma'$ in the class of all epigroups.
\end{proof}

\begin{lemma}
\label{v=w to vz=wz}
Let $\Theta=\{p_i=q_i\mid i\in I\}$ be an identity system such that the variety $\var_E\Theta$ is a variety of epigroups, and let $x$ be a letter with $x\notin c(p_i)\cup c(q_i)$ for all $i\in I$. Put $\Sigma=\{p_ix=q_ix\mid i\in I\}$. Then the variety $\var_E\Sigma$ is a variety of epigroups and the identity $ux=vx$ holds in $\var_E\Sigma$ whenever the identity $u=v$ holds in $\var_E\Theta$.
\end{lemma}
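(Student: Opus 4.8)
The plan is to prove the two assertions in order, first showing that $\var_E\Sigma$ consists of epigroups, and only then using this together with Lemma~\ref{letter on right} to transfer identities; the order matters because Lemma~\ref{letter on right} delivers conclusions only in the class $K_\Sigma$, which a priori may be strictly smaller than $\var_E\Sigma$ (cf. Example~\ref{V_Sigma ne var_E Sigma}).

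For the first assertion I would argue via the structural dichotomy of Lemma~\ref{V_Sigma=var_E Sigma}. Since $\var_E\Theta$ is a variety of epigroups, each of its members is an epigroup satisfying $\Theta$, so $\var_E\Theta\subseteq K_\Theta$; as the reverse inclusion always holds we get $K_\Theta=\var_E\Theta=\mathbf V[\Theta]$, which is condition a) of Lemma~\ref{V_Sigma=var_E Sigma} for $\Theta$. Hence $\Theta$ contains a semigroup heterotypical identity or a mixed identity. I would then check that right multiplication by $x$ preserves whichever property holds: if $p_i=q_i$ is mixed then, because appending the letter $x$ keeps a semigroup word a semigroup word and a non-semigroup word non-semigroup, $p_ix=q_ix$ is again mixed; if $p_i=q_i$ is a semigroup heterotypical identity then $p_ix,q_ix$ are semigroup words and, as $x\notin c(p_i)\cup c(q_i)$, one has $c(p_ix)=c(p_i)\cup\{x\}\ne c(q_i)\cup\{x\}=c(q_ix)$, so $p_ix=q_ix$ is still heterotypical. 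Thus $\Sigma$ satisfies condition c) of Lemma~\ref{V_Sigma=var_E Sigma}, and the equivalence in that lemma gives $\mathbf V[\Sigma]=\var_E\Sigma$; in particular $K_\Sigma$ is a variety equal to $\var_E\Sigma$, so $\var_E\Sigma$ consists of epigroups and is a variety of epigroups (by Corollary~\ref{finite index} it even lies in some $\mathcal E_n$).

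For the second assertion, assume $u=v$ holds in $\var_E\Theta$. I would first reduce to the case $x\notin c(u)\cup c(v)$: picking a fresh letter $x'$ and substituting $x'$ for $x$ yields an identity $\tilde u=\tilde v$ that still holds in $\var_E\Theta$ and no longer involves $x$; once $\tilde ux=\tilde vx$ has been secured in $\var_E\Sigma$, the substitution $x'\mapsto x$ recovers $ux=vx$, both substitutions being legitimate since varieties are closed under them. With $x\notin c(\tilde u)\cup c(\tilde v)$ and $x\notin c(p_i)\cup c(q_i)$, Lemma~\ref{letter on right} applies and shows that $\tilde ux=\tilde vx$ follows from $\Sigma$ in the class of all epigroups, i.e. holds in $K_\Sigma$. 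By the first assertion $K_\Sigma=\var_E\Sigma$, so $\tilde ux=\tilde vx$ holds in $\var_E\Sigma$, and applying $x'\mapsto x$ finishes the argument.

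The hard part, and the reason the two assertions cannot be proved in the opposite order, is exactly the gap between $K_\Sigma$ and $\var_E\Sigma$: Lemma~\ref{letter on right} only produces identities valid in $K_\Sigma$, whereas the statement asks for identities of $\var_E\Sigma$. The main work therefore lies in the first assertion, where the dichotomy furnished by Lemma~\ref{V_Sigma=var_E Sigma} must be shown to survive multiplication by $x$ on the right; once the equality $K_\Sigma=\var_E\Sigma$ is established, the second assertion reduces to a direct invocation of Lemma~\ref{letter on right} modulo the harmless renaming of $x$.
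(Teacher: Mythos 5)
Your proof is correct and follows essentially the same route as the paper: both rest on Lemma~\ref{letter on right} for the identity transfer and on Lemma~\ref{V_Sigma=var_E Sigma} together with Proposition~\ref{when var} to establish $K_\Sigma=\mathbf V[\Sigma]=\var_E\Sigma$, with only a harmless renaming of letters to meet the hypotheses of Lemma~\ref{letter on right}. The one cosmetic difference is that the paper performs the identity transfer first (landing in $K_\Sigma$) and upgrades $K_\Sigma$ to $\var_E\Sigma$ afterwards, so your remark that the two parts \emph{cannot} be done in that order is overstated, though this does not affect the validity of your argument.
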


\begin{proof}
Let $y$ be a letter with $y\notin c(u)\cup c(v)$ and $y\notin c(p_i)\cup c(q_i)$ for all $i\in I$. By Lemma~\ref{letter on right}, the identity $uy=vy$ holds in the class of epigroups $K_\Sigma$. Substituting $x$ for $y$ in this identity, we obtain that the identity $ux=vx$ is valid in $K_\Sigma$. Now Lemma~\ref{V_Sigma=var_E Sigma} applies with the conclusion that the system $\Theta$ contains an identity $p_i=q_i$ that is either mixed or semigroup heterotypical. Then the identity $p_ix=q_ix \in \Sigma$ also has one of these two properties. In view of Proposition~\ref{when var}, the class $K_\Sigma$ is an epigroup variety. Now Lemma~\ref{V_Sigma=var_E Sigma} applies again, and we conclude that $K_\Sigma=V[\Sigma]=\var_E\Sigma$.
\end{proof}

\begin{corollary}
\label{var{u=v}=var{p=q}}
Let $\Theta_1=\{v_i=w_i\mid i\in I\}$, $\Theta_2=\{p_\alpha=q_\alpha\mid\alpha\in\Lambda\}$ and $x$ a letter such that $x\notin c(v_i)\cup c(w_i)$ for all $i\in I$ and $x\notin c(p_\alpha)\cup c(q_\alpha)$ for all $\alpha\in\Lambda$. Put $\Sigma_1=\{v_ix=w_ix\mid i\in I\}$ and $\Sigma_2=\{p_\alpha x=q_\alpha x\mid\alpha\in\Lambda\}$. If $\var_E\Theta_1$ is an epigroup variety then $\var_E\Theta_1=\var_E\Theta_2$ if and only if $\var_E\Sigma_1=\var_E\Sigma_2$.
\end{corollary}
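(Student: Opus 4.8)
The strategy is to translate the equality of two epigroup varieties into the mutual validity of their defining identities, and then to carry each such identity back and forth across the passage $\{p=q\}\mapsto\{px=qx\}$ by means of Lemmas~\ref{vz=wz to v=w} and~\ref{v=w to vz=wz}. First I would recall the routine fact that, for identity systems $\Pi_1$ and $\Pi_2$, the inclusion $\var_E\Pi_1\subseteq\var_E\Pi_2$ holds if and only if every identity of $\Pi_2$ is satisfied by $\var_E\Pi_1$; indeed $\Pi_2\cup\Delta$ is a basis of $\var_E\Pi_2$, and the identities of $\Delta$ hold automatically because $\var_E\Pi_1\subseteq\mathcal{EPI}$. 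Hence $\var_E\Theta_1=\var_E\Theta_2$ amounts to the conjunction of ``$\Theta_2$ holds in $\var_E\Theta_1$'' and ``$\Theta_1$ holds in $\var_E\Theta_2$'', and the analogous reformulation applies to $\var_E\Sigma_1=\var_E\Sigma_2$.

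The heart of the matter is a \emph{transfer principle}. Fix an identity system $\Theta=\{p_j=q_j\mid j\in J\}$ with $x\notin c(p_j)\cup c(q_j)$, and put $\Sigma=\{p_jx=q_jx\mid j\in J\}$. For arbitrary words $u,v$ with $x\notin c(u)\cup c(v)$, Lemma~\ref{vz=wz to v=w} gives that if $ux=vx$ holds in $\var_E\Sigma$ then $u=v$ holds in $\var_E\Theta$, with no further hypothesis; and Lemma~\ref{v=w to vz=wz} gives the reverse implication provided $\var_E\Theta$ is an epigroup variety. Applying this with $(\Theta,\Sigma)=(\Theta_1,\Sigma_1)$ to the identities $p_\alpha=q_\alpha$ and $p_\alpha x=q_\alpha x$ yields the biconditional
$$\var_E\Theta_1\subseteq\var_E\Theta_2\iff\var_E\Sigma_1\subseteq\var_E\Sigma_2,$$
whose left-to-right direction uses that $\var_E\Theta_1$ is an epigroup variety (a hypothesis of the corollary). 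Applying it with $(\Theta,\Sigma)=(\Theta_2,\Sigma_2)$ to the identities $v_i=w_i$ and $v_ix=w_ix$ yields
$$\var_E\Theta_2\subseteq\var_E\Theta_1\iff\var_E\Sigma_2\subseteq\var_E\Sigma_1,$$
where now only the left-to-right direction would require $\var_E\Theta_2$ to be an epigroup variety.

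It remains to assemble the two implications. For the ``if'' part, suppose $\var_E\Sigma_1=\var_E\Sigma_2$; then both inclusions between $\var_E\Sigma_1$ and $\var_E\Sigma_2$ hold, so the hypothesis-free right-to-left halves of the two biconditionals give $\var_E\Theta_1\subseteq\var_E\Theta_2$ and $\var_E\Theta_2\subseteq\var_E\Theta_1$, whence $\var_E\Theta_1=\var_E\Theta_2$. For the ``only if'' part, suppose $\var_E\Theta_1=\var_E\Theta_2$. The first biconditional immediately gives $\var_E\Sigma_1\subseteq\var_E\Sigma_2$. For the opposite inclusion I invoke the second biconditional in the left-to-right direction; this is legitimate because $\var_E\Theta_2$ equals $\var_E\Theta_1$ and is therefore an epigroup variety, so Lemma~\ref{v=w to vz=wz} applies with base $(\Theta_2,\Sigma_2)$. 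Thus $\var_E\Sigma_2\subseteq\var_E\Sigma_1$ as well, and $\var_E\Sigma_1=\var_E\Sigma_2$.

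The single subtle point — and the step I expect to be the only real obstacle — is precisely this last application of Lemma~\ref{v=w to vz=wz} with base $\Theta_2$: that lemma needs $\var_E\Theta_2$ to be a variety of epigroups, which is not assumed outright but is recovered from the supposed equality $\var_E\Theta_1=\var_E\Theta_2$. Everything else is a termwise transfer of identities, so once this observation is made the argument closes immediately.
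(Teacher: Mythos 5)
Your proof is correct and follows essentially the same route as the paper: necessity via Lemma~\ref{v=w to vz=wz} and sufficiency via Lemma~\ref{vz=wz to v=w}, with the inclusions reduced to termwise validity of the defining identities. The one place where you are more explicit than the paper --- which disposes of the reverse inclusions with ``by symmetry'' --- is in noting that the symmetric application of Lemma~\ref{v=w to vz=wz} with base $\Theta_2$ needs $\var_E\Theta_2$ to be an epigroup variety, which is recovered from the assumed equality $\var_E\Theta_1=\var_E\Theta_2$; this is a genuine and correctly handled subtlety.
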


\begin{proof}
\emph{Necessity}. Let $\var_E\Theta_1=\var_E\Theta_2$. This means that the variety $\var_E \Theta_1$ satisfies the identity $p_\alpha=q_\alpha$ for each $\alpha \in \Lambda$. Lemma~\ref{v=w to vz=wz} implies that the identity $p_\alpha x=q_\alpha x$ holds in the variety $\var_E\Sigma_1$. Therefore, $\var_E\Sigma_1\subseteq\var_E\Sigma_2$. The opposite inclusion holds as well by symmetry. Thus $\var_E\Sigma_1=\var_E\Sigma_2$.

\smallskip

\emph{Sufficiency.} Let now $\var_E\Sigma_1=\var_E\Sigma_2$. Then $p_\alpha x=q_\alpha x$ holds in $\var_E\Sigma_1$ for each $\alpha\in\Lambda$. Lemma~\ref{vz=wz to v=w} then implies that the variety $\var_E \Theta_1$ satisfies all identities $p_\alpha=q_\alpha$, whence $\var_E\Theta_1\subseteq\var_E\Theta_2$. By symmetry, $\var_E\Theta_2\subseteq\var_E\Theta_1$. Therefore, $\var_E\Theta_1=\var_E\Theta_2$.
\end{proof}

Now we are ready to complete the proof of Theorem~\ref{endomorphisms}. By symmetry and evident induction, it suffices to verify that if $x$ is a letter with $x\notin c(u_i)\cup c(v_i)$ for all $i\in I$ then $\mathcal V^{0,1}=\var_E\{u_ix=v_ix \mid i\in I\}$ is an epigroup variety and the map $f$ from $\mathbf{EPI}$ into itself given by the rule $f(\mathcal{V)=V}^{0,1}$ is an injective endomorphism of the lattice $\mathbf{EPI}$.

The claim that $\mathcal V^{0,1}$ is an epigroup variety follows from Lemma~\ref{v=w to vz=wz}. The map $f$ is well defined and injective by Corollary~\ref{var{u=v}=var{p=q}}. It remains to verify that the map $f$ is a homomorphism. Let $\mathcal V_1$ and $\mathcal V_2$ be epigroup varieties. Further, let $\mathcal V_1=\var_E\Theta_1$ and $\mathcal V_2=\var_E\Theta_2$ for identity systems $\Theta_1=\{u_i=v_i\mid i\in I\}$ and $\Theta_2=\{p_\alpha=q_\alpha\mid \alpha\in\Lambda\}$. Suppose that $x$ is a letter such that $x\notin c(u_i)\cup c(v_i)$ for all $i\in I$ and $x\notin c(p_\alpha)\cup c(q_\alpha)$ for all $\alpha\in\Lambda$. Put $\Sigma_1=\{u_ix=v_ix\mid i\in I\}$ and $\Sigma_2=\{p_\alpha x=q_\alpha x\mid\alpha\in\Lambda\}$. Then we have
\begin{align*}
f(\mathcal V_1\wedge\mathcal V_2)&=f(\var_E\Theta_1\wedge\var_E\Theta_2)=f(\var_E(\Theta_1\cup\Theta_2))\\
&{}=\var_E(\Sigma_1\cup\Sigma_2)=\var_E\Sigma_1\wedge\var_E\Sigma_2=f(\mathcal V_1)\wedge f(\mathcal V_2)\ldotp
\end{align*}
Thus, $f(\mathcal V_1\wedge\mathcal V_2)=f(\mathcal V_1)\wedge f(\mathcal V_2)$. It remains to verify that $f(\mathcal V_1\vee\mathcal V_2)=f(\mathcal V_1)\vee f(\mathcal V_2)$.

Let $\mathcal V_1\vee\mathcal V_2=\var_E\Theta$ where $\Theta=\{s_j=t_j\mid j\in J\}$. We may assume without loss of generality that $x\notin c(s_j)\cup c(y_j)$ for all $j\in J$ because we may rename letters otherwise. Further, $\var_E\Theta_1=\var_E(\Theta_1\cup \Theta)$ because $\mathcal V_1\subseteq\mathcal V_1\vee\mathcal V_2$. Now Corollary~\ref{var{u=v}=var{p=q}} applies with the conclusion that
\begin{align*}
f(\mathcal V_1)=f(\var_E(\Theta_1\cup\Theta))&=\var_E\{u_ix=v_ix,\,s_jx=t_jx\mid i\in I,j\in J\}\\
&\subseteq\var_E\{s_jx=t_jx\mid j\in J\}=f(\mathcal V_1\vee\mathcal V_2)\ldotp
\end{align*}
Analogously, $f(\mathcal V_2)\subseteq f(\mathcal V_1\vee\mathcal V_2)$. Therefore, $f(\mathcal V_1)\vee f(\mathcal V_2)\subseteq f(\mathcal V_1\vee\mathcal V_2)$.

It remains to verify the opposite inclusion. Let the identity $u=v$ hold in the variety $f(\mathcal V_1)\vee f(\mathcal V_2)$. Then it holds in $f(\mathcal V_i)$ with $i=1,2$. We may assume without any loss that $t(u)\equiv x$. Applying Corollary~\ref{t(p)=t(q)}, we conclude that $t(v)\equiv t(u)\equiv x$. Lemma~\ref{w=w*z in EPI} implies that the identities $u=u^\ast x$ and $v=v^\ast x$ hold in the variety $\mathcal{EPI}$. Hence, the variety $f(\mathcal V_1)$ satisfies the identity $u^\ast x =v^\ast x$. Now Lemma~\ref{vz=wz to v=w} applies and we conclude that the variety $\mathcal V_1$ satisfies the identity $u^\ast=v^\ast$. Analogous considerations show that this identity is true in the variety $\mathcal V_2$ as well. Thus, $u^\ast=v^\ast$ holds in the variety $\mathcal V_1\vee\mathcal V_2$. The letter $x$ does not occur in any identity from $\Theta$. Now Lemma~\ref{v=w to vz=wz} applies with the conclusion that the variety $f(\mathcal V_1\vee\mathcal V_2)$ satisfies the identity $u^\ast x=v^\ast x$. Then this variety satisfies the identity $u=v$ too. Therefore, $f(\mathcal V_1\vee\mathcal V_2)\subseteq f(\mathcal V_1)\vee f(\mathcal V_2)$.\qed

\section{Varieties of finite degree}
\label{proof of theorem 2}

The aim of this section is to prove Theorem~\ref{deg fin epi} and Corollary~\ref{deg n epi}. The implication 4)\,$\longrightarrow$\,3) of Theorem~\ref{deg fin epi} is obvious, while the implication 3)\,$\longrightarrow$\,2) follows from the evident fact that the variety $\mathcal F$ does not satisfy any identity of the form~\eqref{equ-gen} with $\ell(w)>n$. It remains to verify the implications 1)\,$\longrightarrow$\,4) and 2)\,$\longrightarrow$\,1).

1)\,$\longrightarrow$\,4) We are going to verify that if an epigroup variety $\mathcal V$ is a variety of degree $\le n$ then it satisfies an identity of the form~\eqref{equ-epi} for some $i$ and $j$ with $1\le i\le j\le n$. Clearly, this implies the implication. We use induction on $n$.

\smallskip

\emph{Induction base}. If $\mathcal V$ is a variety of degree 1, then it satisfies the identity of the form~\eqref{equ-epi} with $i=j=n=1$ by Lemma~\ref{cr}.

\smallskip

\emph{Induction step}. Let $n>1$ and $\mathcal V$ is a variety of degree $\le n$. If $\mathcal P,\overleftarrow{\mathcal P}\nsubseteq\mathcal V$ then $\mathcal V$ is a variety of epigroups with completely regular $n$th power by Lemma~\ref{without P and P-dual}. By Lemma~\ref{cr} $\mathcal V$ then satisfies the identity~\eqref{x_1...x_n=(x_1...x_n)**}, i.e., the identity of the form~\eqref{equ-epi} with $i=1$ and $j=n$. Suppose now that $\mathcal V$ contains one of the varieties $\mathcal P$ or $\overleftarrow{\mathcal P}$. We will assume without loss of generality that $\mathcal{P\subseteq V}$.

The variety $\mathcal F_{n+1}$ has degree $n+1$, whence $\mathcal{V\nsupseteq F}_{n+1}$. Therefore, there is an identity $u=v$ that holds in $\mathcal V$ but is false in $\mathcal F_{n+1}$. In view of Lemma~\ref{nil}, every non-semigroup word equals to 0 in $\mathcal F_{n+1}$. It is evident that every non-linear semigroup word and every semigroup word of length $>n$ equal to 0 in $\mathcal F_{n+1}$ as well. Therefore, we may assume without any loss that $u$ is a linear semigroup word of length $\le n$, i.e., $u\equiv x_1\cdots x_m$ for some $m\le n$. Since $\mathcal{P\subseteq V}$, the identity $x_1\cdots x_m=v$ holds in $\mathcal P$. Now Lemma~\ref{x_1...x_n=w in P} applies with the conclusion that $m>1$ and $v\equiv v'x_m$ for some word $v'$ with $c(v')=\{x_1,\dots,x_{m-1}\}$. Suppose that $\ell(v')\le m-1$. In particular, this means that $v'$ is a semigroup word. Since $c(v')=\{x_1,\dots,x_{m-1}\}$, we have that $\ell(v')=m-1$. Therefore, the word $v'$ is linear, whence $v$ is linear too. This means that $u=v$ is a permutational identity. But every permutational identity holds in the variety $\mathcal F_{n+1}$, while the identity $u=v$ is false in $\mathcal F_{n+1}$. Hence $\ell(v')>m-1$. So, taking into account Proposition~\ref{when var}, we have that $\mathcal V \subseteq \mathbf V[x_1\cdots x_{m}=v'x_m]$.

Also, Proposition~\ref{when var} implies that the class of epigroups satisfying the identity
\begin{equation}
\label{x_1...x_{m-1}=v'}
x_1\cdots x_{m-1}=v'
\end{equation}
is a variety. We denote this variety by $\mathcal V'$. According to Lemma~\ref{fin deg or permut}, $\mathcal V'$ is a variety of degree $\le m-1$. Since $m\le n$, we use the induction assumption and conclude that
$$\mathcal V' \subseteq \var_E\{x_1\cdots x_{m-1}=x_1\cdots x_{i-1}\cdot\overline{\overline{x_i\cdots x_j}}\cdot x_{j+1}\cdots x_{m-1}\}$$
for some $1\le i\le j\le m-1$. Further considerations are divided into two cases.

\smallskip

\emph{Case} 1: the word $v'$ involves the unary operation. According to Lemma~\ref{V_Sigma=var_E Sigma}, $\mathcal V'= \mathbf V[x_1\cdots x_{m-1}=v']=\var_E\{x_1\cdots x_{m-1}=v'\}$.
The letter $x_m$ does not occur in any of the words $x_1\cdots x_{m-1}$, $v'$ and $x_1\cdots x_{i-1}\cdot\overline{\overline{x_i\cdots x_j}}\cdot x_{j+1}\cdots x_{m-1}$. Now Theorem~\ref{endomorphisms} applies with the conclusion that
$$\var_E\{x_1\cdots x_{m}=v'x_m\}\subseteq \var_E\{x_1\cdots x_m=x_1\cdots x_{i-1}\cdot\overline{\overline{x_i\cdots x_j}}\cdot x_{j+1}\cdots x_m\}\ldotp$$
Therefore, $\mathcal V$ satisfies the identity
\begin{equation}
\label{short equ-epi}
x_1\cdots x_m=x_1\cdots x_{i-1}\cdot\overline{\overline{x_i\cdots x_j}}\cdot x_{j+1}\cdots x_m.
\end{equation} It is evident that this identity implies the identity~\eqref{equ-epi}.

\smallskip

\emph{Case} 2: $w'$ is a semigroup word. Substitute some letter $x$ to all letters occurring in the identity~\eqref{x_1...x_{m-1}=v'}. Then we obtain an identity $x^{m-1}=x^{m-1+k}$ for some $k>0$. By~\eqref{*x=x^{(p+1)q-1}}, the latter identity implies in the class of all epigroups the identity $\overline x\,=x^{mk-1}$. Using Lemma~\ref{V_Sigma=var_E Sigma} we have
\begin{align*}
\mathcal V'&=\mathbf V[x_1\cdots x_{m-1}=v']=\mathbf V[x_1\cdots x_{m-1}=v',\overline x\,=x^{mk-1}]\\
&=\var_E\{x_1\cdots x_{m-1}=v',\overline x\,=x^{mk-1}\}\ldotp
\end{align*}
As in the Case 1, we apply Theorem~\ref{endomorphisms}. We get that the variety
$$\var_E\{x_1\cdots x_m=x_1\cdots x_{i-1}\cdot\overline{\overline{x_i\cdots x_j}}\cdot x_{j+1}\cdots x_m\}\ldotp$$
contains the variety $\var_E\{x_1\cdots x_{m}=v'x_m,\,\overline x\,x_m=x^{mk-1}x_m\}$.
Note that the variety $\mathbf V[x_1\cdots x_{m}=v'x_m]$ satisfies the identity $x^m=x^{m+k}$. Hence, taking into account~\eqref{*x=x^{pq-1}}, we have that the identity $\overline x\,=x^{mk-1}$ holds in this variety. Then the variety $\mathbf V[x_1\cdots x_{m}=v'x_m]$ satisfies the identity $\overline x\,x_m=x^{mk-1}x_m$. Hence, the identity~\eqref{short equ-epi} holds in the variety $\mathbf V[x_1\cdots x_{m}=v'x_m]$. Then this variety satisfies the identity~\eqref{equ-epi}. Thus, we complete the proof of the implication 1)\,$\longrightarrow$\,4).

\smallskip

2)\,$\longrightarrow$\,1) Let $\mathcal{V\nsupseteq F}$. Then there is an identity $u=v$ that holds in $\mathcal V$ but does not hold in $\mathcal F$. Repeating literally arguments from the proof of the implication 1)\,$\longrightarrow$\,4), we reduce our consideration to the case when the word $u$ is linear. Now Lemma~\ref{fin deg or permut} and the fact that every permutational identity holds in the variety $\mathcal F$ imply that $\mathcal V$ is a variety of finite degree. Theorem~\ref{deg fin epi} is proved.\qed

\medskip

It remains to prove Corollary~\ref{deg n epi}. The implication 1)\,$\longrightarrow$\,4) of this corollary follows from the proof of the same implication in Theorem~\ref{deg fin epi}. The implication 4)\,$\longrightarrow$\,3) is evident, while the implication 3)\,$\longrightarrow$\,2) follows from the evident fact that the variety $\mathcal F_{n+1}$ does not satisfy an identity of the form~\eqref{equ-gen} with $\ell(w)>n$. Finally, the implication 2)\,$\longrightarrow$\,1) of Corollary~\ref{deg n epi} is verified quite analogously to the same implication of Theorem~\ref{deg fin epi}.\qed

\medskip

\begin{flushleft}
Ural Federal University, Institute of Mathematics and Computer Science,\\
Lenina 51, 620000 Ekaterinburg, Russia\\[\smallskipamount]
\emph{E-mail addresses}: sergey.gusb@gmail.com, bvernikov@gmail.com
\end{flushleft}

\end{document}